\theoremstyle{definition}
\newtheorem{thm}{Theorem}[section]
\newtheorem{prop}[thm]{Proposition}
\newtheorem{lem}[thm]{Lemma}
\newtheorem{defn}[thm]{Definition}
\def\E{{\mathfrak E}}
\def\k{{\kappa}}
\newcommand{\comment}[1]{}
\numberwithin{equation}{section}
\def\lsim{\raisebox{-1ex}{$~\stackrel{\textstyle <}{\sim}~$}}
\theoremstyle{definition}
\begin{document}
\title{Maximal operator in Dunkl-Fofana spaces}
\author[P. Nagacy]{Pokou Nagacy}
\address{Laboratoire de Math\'ematiques Fondamentales, UFR Math\'ematiques et Informatique, Universit\'e F\'elix Houphou\"et-Boigny Abidjan-Cocody, 22 B.P 582 Abidjan 22. C\^ote d'Ivoire}
\email{{\tt pokounagacy@yahoo.com}}
\author[J. Feuto]{Justin Feuto}
\address{Laboratoire de Math\'ematiques Fondamentales, UFR Math\'ematiques et Informatique, Universit\'e F\'elix Houphou\"et-Boigny Abidjan-Cocody, 22 B.P 1194 Abidjan 22. C\^ote d'Ivoire}
\email{{\tt justfeuto@yahoo.fr}}

\subjclass{}
\keywords{}

\date{}

\begin{abstract}
 We generalize Wiener amalgam spaces by using Dunkl translation instead of the classical one, and we give some relationship between these spaces, Dunkl-Lebesgue spaces and Dunkl-Morrey spaces. We prove that the Hardy-Litlewood maximal function associated with the Dunkl operators is bounded on these generalized Dunkl-Morrey spaces. 
\end{abstract}

\maketitle

\section{Introduction}
For $1\leq p,q\leq\infty$, the amalgam of $L^q$ and $L^p$ on the real line is the space $(L^{q},L^{p})$ of complex values  functions $f$ on  $\mathbb R$ which are locally in $L^{q}$ and such that the function $y\mapsto \left\|f\chi_{I(y,1)}\right\|_{L^q}$ belongs to $L^p(\mathbb R)$, where $\chi_{I(y,1)}$ denotes the characteristic function of the open interval $I(y,1)=(y-1,y+1)$ and $ \left\|\cdot\right\|_{L^q}$ the usual Lebesgue norm in $L^q$.

We recall that for $q<\infty$
$$\left\|f\chi_{I(y,1)}\right\|_{L^q}=\left(\int_{I(0,1)}\left|f(u+y)\right|^{q}du\right)^{\frac{1}{q}}=\left\|\;(_{-y}\left|f\right|^q)\chi_{I(0,1)}\right\|_{L^q},$$

where $(_{y}g)(x)=g(x-y)$ is the classical translation.

Over the past 30 years, much work in harmonic analysis has focused on generalizing the results established in classical Fourier analysis within the framework of Dunkl analysis.
The Dunkl translation (see Section 2) is an important tool in this task. It is natural to seek to know what become Wiener amalgam spaces once we replace classical translation by Dunkl one.

Taking in the definition of Wiener amalgam space the generalized translation as defined in \cite{T5}, we give the analogue of the Wiener amalgam spaces and look at some subspaces of this spaces  as well as the boundedness of the Hardy-Littlewood maximal operator associated with the Dunkl operator. For this purpose we define the function spaces using the harmonic analysis associated with the Dunkl operators on $\mathbb R$. The generalized
shift operators we are considering are associated with the reflection group $\mathbb Z_{2}$ on $\mathbb R$. For the basic properties of the Dunkl analysis, we refer the reader to \cite{D1,R4,T5} and the references therein. The Lebesgue measure on the real line will be denoted by $dx$. 
For any Lebesgue measurable subset $E$ of $\mathbb R$, $\left|E\right|$ stands for its Lebesgue measure. We denote by $\mathcal E$ the space of functions of class $\mathcal C^\infty$ and by $\mathcal C_c$ the space of continuous functions with compact supports.

This paper is organized as follow:

 Section 2 is devoted to  the prerequisite for Dunkl analysis on the real line while Section 3 deals with the definition and properties of Dunkl-amalgam spaces. We introduce Fofana spaces associated to the Dunkl-amalgam in Section 4, and compare this space with Lebesgue spaces and Morrey-Dunkl spaces. In Section 5, we stated our results for Dunkl-type Hardy-Littlewood maximal function and give their proofs in Section 6. 

The letter $C$ will be used for non-negative constants independent of the relevant variables, and this constant may change from one occurrence to another. A constant with index, such as $ C_\kappa $, does not change in different occurrences but depends on the parameters mentioned there.

We propose the following abbreviation $\mathrm{\bf A}\lsim \mathrm{\bf B}$ for the inequalities $\mathrm{\bf A}\leq C\mathrm{\bf B}$, where $C$ is a positive constant independent of the main parameters. If $\mathrm{\bf A}\lsim \mathrm{\bf B}$ and $\mathrm{\bf B}\lsim \mathrm{\bf A}$, then we write $\mathrm{\bf A}\approx \mathrm{\bf B}$.

\section{Prerequisite on Dunkl analysis on the real line}
For a real parameter $\kappa >-1/2$, we consider the Dunkl operator, associated
with the reflection group $\mathbb Z_{2}$ on $\mathbb R$:
$$\Lambda_{\kappa}f(x)=\frac{df}{dx}(x)+\frac{2\kappa+1}{x}\left(\frac{f(x)-f(-x)}{2}\right).$$
For $\lambda\in\mathbb C$, the Dunkl kernel denoted $\E_{\kappa}(\lambda)$ (see \cite{D2}), is the only solution of the initial value problem 
$$\Lambda_{\kappa}f(x)=\lambda f(x),\ f(0)=1,\ x\in\mathbb R.$$
It is given by the formula
\begin{equation*}
\E_{\kappa}(\lambda x)=j_{\kappa}(i\lambda x)+\frac{\lambda x}{2(\kappa+1)}j_{\kappa+1}(i\lambda x),\ x\in\mathbb R,
\end{equation*}
where 
$$j_\kappa(z)=2^\kappa\Gamma(\kappa+1)\frac{J_\kappa(z)}{z^\kappa}=\Gamma(\kappa+1)\sum^{\infty}_{n=0}\frac{(-1)^nz^{2n}}{n!2^{2n}\Gamma(n+\kappa+1)},\quad z\in \mathbb C$$
is the normalized Bessel function of the first kind and of order $\kappa$. 
Notice that $\Lambda_{-\frac{1}{2}}=\frac{d}{dx}$ and $\E_{-\frac{1}{2}}(\lambda x)=e^{\lambda x}$.
 It is also proved (see \cite{R4}) that $\left|\E_{\kappa}(ix)\right|\leq 1$ for every $x\in\mathbb R$.

In the rest of the paper, we fix $\kappa > -1/2$, and consider the weighted Lebesgue measure $\mu$ on $\mathbb R$, given by
\begin{equation}
d\mu(x):=\left(2^{\kappa+1}\Gamma(\kappa+1)\right)^{-1}\left|x\right|^{2\kappa+1}dx.\label{*}
\end{equation}
For $1\leq p\leq\infty$, we denote by $ L^{p}(\mu)$ the Lebesgue space associated with the measure $\mu$, while $L^{0}(\mu)$ stands for the complex vector space of equivalence classes (modulo equality $\mu$-almost everywhere) of complex-valued functions $\mu$-measurable  on $\mathbb R$. For $f\in L^{p}(\mu)$, we denote by $\Vert f\Vert_p$ the classical norm of $f$.

The Dunkl kernel $\E=\E_{\kappa}$ gives rise to an integral transform on $\mathbb R$ denoted $\mathcal F=\mathcal F_{\kappa}$ and called Dunkl transform (see \cite{MFE}). For $f\in L^{1}(\mu)$ 
 $$\mathcal F f(\lambda)=\int_{\mathbb R}\mathfrak E(-i\lambda x)f(x)d\mu(x),\ \ \lambda\in\mathbb R.$$
We have the following properties of the Dunkl transform  given in  \cite{MFE}  (see also \cite{D1}).
\begin{thm}
\begin{enumerate}
\item Let $f\in L^1(\mu)$. If $\mathcal F (f)$ is in $L^1(\mu)$, then we have the following inversion formula :
$$f(x)=c\int_{\mathbb R}\E(ixy)\mathcal F(f)(y)d\mu(y).$$
\item The Dunkl transform has a unique extension to an isometric isomorphism on $L^2(\mu)$.
\end{enumerate}
\end{thm}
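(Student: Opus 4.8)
The two statements are the Dunkl analogues of the Fourier inversion and Plancherel theorems, and the plan is to follow the standard route, anchored by the single fact that the Gaussian is, up to an explicit positive factor, a fixed point of the Dunkl transform: a direct computation with the series for $j_\kappa$ gives $\mathcal{F}_\kappa\big(e^{-t|\cdot|^2}\big)(y)=c_t\,e^{-|y|^2/(4t)}$ for $t>0$, with $c_t>0$ explicit. Combined with the elementary bound $|\E_\kappa(iu)|\le 1$ recalled above, this will power both proofs.

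For the inversion formula I would fix $f\in L^1(\mu)$ with $\mathcal{F}f\in L^1(\mu)$ and introduce the Gaussian regularization
$$f_t(x):=c\int_{\mathbb{R}}\E(ixy)\,\mathcal{F}f(y)\,e^{-t|y|^2}\,d\mu(y),\qquad t>0 .$$
Since $f\in L^1(\mu)$ and $|\E|\le1$ on $i\mathbb{R}$, Fubini's theorem turns this into $f_t(x)=\int_{\mathbb{R}}f(z)\,h_t(x,z)\,d\mu(z)$, where $h_t(x,z)=c\int_{\mathbb{R}}\E(ixy)\E(-izy)e^{-t|y|^2}d\mu(y)$ is the Dunkl heat kernel; by the product formula for $\E_\kappa$ it equals the Dunkl translate $\tau_x\big(c_t e^{-|\cdot|^2/4t}\big)(-z)$. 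The next step is to check that $\{h_t(x,\cdot)\}_{t>0}$ is an approximate identity for $d\mu$ — nonnegative, total mass $1$, concentrating at $x$ as $t\to0$ — which gives $f_t\to f$ in $L^1(\mu)$ and hence $f_t(x)\to f(x)$ for $\mu$-a.e.\ $x$ along a subsequence. On the other hand $|\E(ixy)\,\mathcal{F}f(y)\,e^{-t|y|^2}|\le|\mathcal{F}f(y)|\in L^1(\mu)$, so dominated convergence gives $f_t(x)\to c\int_{\mathbb{R}}\E(ixy)\mathcal{F}f(y)d\mu(y)$ for every $x$; comparing the two limits yields the inversion formula.

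For the Plancherel statement I would work first on the Schwartz space $\mathcal{S}(\mathbb{R})$. One checks that $\mathcal{F}_\kappa$ maps $\mathcal{S}(\mathbb{R})$ into itself, using the intertwining relations $\mathcal{F}_\kappa(\Lambda_\kappa f)(y)=iy\,\mathcal{F}_\kappa f(y)$ and $\mathcal{F}_\kappa(x f)(y)=i\Lambda_\kappa\big(\mathcal{F}_\kappa f\big)(y)$; together with the inversion formula just proved this makes $\mathcal{F}_\kappa$ a bijection of $\mathcal{S}(\mathbb{R})$. For $f\in\mathcal{S}(\mathbb{R})$ one then expands $\|\mathcal{F}_\kappa f\|_2^2=\int_{\mathbb{R}}\mathcal{F}_\kappa f(y)\,\overline{\mathcal{F}_\kappa f(y)}\,d\mu(y)$, inserts the integral defining one of the two factors, applies Fubini, and recognizes the inner integral as the inversion formula applied to $\overline{\mathcal{F}_\kappa f}$, obtaining $\|\mathcal{F}_\kappa f\|_2=\|f\|_2$, the constants matching by the normalization of $\mu$. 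Since $\mathcal{S}(\mathbb{R})$ is dense in $L^2(\mu)$, $\mathcal{F}_\kappa$ extends uniquely to a linear isometry of $L^2(\mu)$; its range is closed and contains the dense set $\mathcal{F}_\kappa(\mathcal{S}(\mathbb{R}))=\mathcal{S}(\mathbb{R})$, hence equals $L^2(\mu)$, so the extension is an isometric isomorphism.

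The hard part is the one genuinely Dunkl-specific input: showing that $\{h_t(x,\cdot)\}_{t>0}$ really is an approximate identity, i.e.\ the nonnegativity together with the unit-mass and concentration of the Dunkl translate of a Gaussian. On $\mathbb{R}$ with the reflection group $\mathbb{Z}_2$ this can be read off the explicit formula for $\tau_x$ and the positivity of Dunkl translation (R\"osler), but it is exactly the place where the classical Fourier argument cannot be copied verbatim; the invariance of $\mathcal{S}(\mathbb{R})$ under $\mathcal{F}_\kappa$ is a second, milder technical point. A route that avoids heat kernels entirely is to decompose $f=f_e+f_o$ into its even and odd parts: since $\E_\kappa(-i\lambda x)=j_\kappa(\lambda x)-\tfrac{i\lambda x}{2(\kappa+1)}j_{\kappa+1}(\lambda x)$ has even first term and odd second term in $x$, the Dunkl transform is carried by the Hankel transforms of orders $\kappa$ and $\kappa+1$ on $(0,\infty)$, whose inversion and Plancherel theorems are classical, and one then reassembles the two pieces.
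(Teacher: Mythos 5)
This theorem is not proved in the paper at all: it is quoted as known background from de Jeu \cite{MFE} (see also \cite{D1}), so there is no in-paper argument to compare yours against. Your sketch is nonetheless a correct outline of the standard proof, essentially the one in \cite{MFE}: Gaussian regularization together with the fact that $\mathcal F_\kappa$ reproduces Gaussians for the inversion formula, then Schwartz-space density, the intertwining relations and the multiplication formula for the Plancherel statement. You also correctly isolate the genuinely Dunkl-specific inputs. One caveat on the first of them: the rank-one Dunkl translation is \emph{not} positivity-preserving on general functions (its explicit kernel changes sign), but it is positive on even functions, where it reduces to the Bessel--Kingman hypergroup translation; since the Gaussian is even, this suffices for the nonnegativity of $h_t(x,\cdot)$, while unit mass follows from $\int\tau_x g\,d\mu=\mathcal F(\tau_x g)(0)=\mathcal F g(0)=\int g\,d\mu$ and concentration at $x$ (rather than at the reflected point $-x$) is the analogue of the Lebesgue differentiation property recalled in Section 2 and does require an argument. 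Your alternative route --- splitting $f=f_e+f_o$ and reducing to the Hankel transforms of orders $\kappa$ and $\kappa+1$, whose inversion and Plancherel theorems are classical --- is in fact the most economical proof in the $\mathbb Z_2$ setting and is essentially the treatment in \cite{R4}; either route closes the statement.
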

\begin{defn}
Let $y\in\mathbb R$ be given. The generalized translation operator $f\mapsto\tau_y f$ is defined on $L^2(\mu)$ by the equation 
$$\mathcal F(\tau_yf)(x)=\E(ixy)\mathcal F(f)(x);\quad x\in\mathbb R$$
\end{defn}
Mourou proved in \cite{M} that generalized translation operators have the following properties:
\begin{thm}
\begin{enumerate}
\item The operator $\tau_{x}$, $x\in\mathbb R$, is a continuous linear operator from $\mathcal E(\mathbb R)$ into itself,
\item For $f\in\mathcal E(\mathbb R)$ and  $x,y\in \mathbb R$ 
\begin{enumerate}
\item $\tau_{x}f(y)=\tau_{y}f(x)$,
\item $\tau_{0}f=f$,
\item $\tau_{x}\circ\tau_{y}=\tau_{y}\circ\tau_{x}$.
\end{enumerate}
\end{enumerate}
\end{thm}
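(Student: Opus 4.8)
The plan is to reduce every assertion to the single defining identity $\mathcal F(\tau_y f)(x)=\E(ixy)\mathcal F(f)(x)$ together with the facts recalled above that $\mathcal F$ is an isometric isomorphism of $L^2(\mu)$ (hence injective there) and that the stated inversion formula holds whenever the transform is integrable. With this in hand, (2)(b) and (2)(c) are immediate. For (2)(b): since $\E_\kappa(0)=j_\kappa(0)=1$, we get $\mathcal F(\tau_0 f)=\mathcal F(f)$, so $\tau_0 f=f$ by injectivity. For (2)(c): $\mathcal F\bigl(\tau_x(\tau_y f)\bigr)(\xi)=\E(i\xi x)\mathcal F(\tau_y f)(\xi)=\E(i\xi x)\E(i\xi y)\mathcal F(f)(\xi)$, and scalar multiplication commutes, so this is symmetric under $x\leftrightarrow y$; injectivity then gives $\tau_x\tau_y f=\tau_y\tau_x f$. (Note $\tau_y f\in L^2(\mu)$ because its transform is $\E(i\cdot\,y)\mathcal F(f)$ and $|\E(i\cdot\,y)|\le 1$, so these manipulations are legitimate.)

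For (2)(a) I would first treat $f$ for which the inversion formula applies, e.g. $f$ in the Schwartz class (so that $\mathcal F(f)$ is again Schwartz, in particular in $L^1(\mu)$); then $\E(ix\cdot)\mathcal F(f)\in L^1(\mu)$ as well, since $|\E(ix\cdot)|\le 1$, and
\[
\tau_x f(y)=c\int_{\mathbb R}\E(iy\xi)\,\mathcal F(\tau_x f)(\xi)\,d\mu(\xi)
=c\int_{\mathbb R}\E(iy\xi)\E(ix\xi)\,\mathcal F(f)(\xi)\,d\mu(\xi),
\]
whose right-hand side is visibly symmetric in $x$ and $y$. The general case $f\in\mathcal E(\mathbb R)$ then follows by density of the Schwartz space in $\mathcal E(\mathbb R)$, using the continuity of $\tau_x$ on $\mathcal E(\mathbb R)$ provided by part (1) together with the consistency of the two definitions of $\tau_x$ on the overlap.

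The real work is part (1), that $\tau_x$ maps $\mathcal E(\mathbb R)$ into itself continuously: this is what makes $\tau_x$ available on objects outside $L^2(\mu)$ (constants, polynomials, slowly growing smooth functions), and the transform-side reasoning above does not see it. The route I would take is the explicit integral representation of the Dunkl translation: for $f\in\mathcal E(\mathbb R)$ and $x,y\neq 0$, $\tau_x f(y)$ is the integral of $f$ against an explicit kernel built from $|x|^2+|y|^2-2|x||y|\cos\theta$ and a Gegenbauer-type weight on $[0,\pi]$, supported in $\{z:\ \bigl|\,|x|-|y|\,\bigr|\le|z|\le|x|+|y|\}$. Differentiating under the integral sign and estimating on compact sets yields both $\tau_x f\in\mathcal E(\mathbb R)$ and the seminorm bounds giving continuity, while the behaviour at $x=0$ is pinned down by (2)(b). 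An equivalent route is to characterize $u(x,y):=\tau_x f(y)$ as the unique solution of the Cauchy problem $\Lambda_{\kappa,x}u=\Lambda_{\kappa,y}u$, $u(0,\cdot)=f$ — the identity $\Lambda_{\kappa,x}\E(i\xi x)=i\xi\,\E(i\xi x)$ shows $\tau_x f$ solves it — and to invoke regularity and stability for this equation; a third option is to transport the statement through the Dunkl intertwining (transmutation) operator $V_\kappa\colon\mathcal E(\mathbb R)\to\mathcal E(\mathbb R)$, a homeomorphism conjugating $\tau_x$ to a tractable operator. In any case this is the main obstacle; it is precisely the content established in \cite{M} (see also \cite{T5}), and I would cite it rather than reprove it, then assemble (2) on top of it as above.
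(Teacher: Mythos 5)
The paper does not prove this theorem at all: it is stated verbatim as a result of Mourou, with the attribution ``Mourou proved in \cite{M} that generalized translation operators have the following properties,'' and no argument is given. So there is no in-paper proof to compare yours against; the relevant comparison is between your sketch and the cited source. Your reconstruction is sound and, importantly, gets the logical order right: the defining relation $\mathcal F(\tau_y f)=\E(i\cdot y)\mathcal F(f)$ only defines $\tau_y$ on $L^2(\mu)$, so the transform-side derivations of (2)(a)--(c) are legitimate only for $f$ in (say) the Schwartz class, and extending them to all of $\mathcal E(\mathbb R)$ genuinely requires part (1) plus the compatibility of the two definitions on the overlap --- exactly the dependence you flag. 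You also correctly identify part (1) as the real content; the standard proof (in \cite{M}, and via R\"osler's explicit product formula $\tau_x f(y)=\int f(z)W_\kappa(x,y,z)\,d\mu(z)$ with a signed kernel supported in $\bigl|\,|x|-|y|\,\bigr|\le |z|\le |x|+|y|$, or via the intertwining operator $V_\kappa$) is one of the routes you list, and citing it is what the paper itself does. The only caveat worth recording is that your argument for (2), as written, silently uses that $\tau_x$ maps Schwartz functions to continuous functions (so that the a.e.\ identities from injectivity of $\mathcal F$ on $L^2(\mu)$ upgrade to everywhere identities); this is supplied by the inversion formula you invoke, so the gap is cosmetic rather than substantive.
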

For $x\in\mathbb R$, let $B(x,r)=\left\{y\in\mathbb R:\max\left\{0,\left|x\right|-r\right\}<\left|y\right|<\left|x\right|+r\right\}$ if $x\neq 0$ and $B_r:=B(0,r)=\left]-r,r\right[$. We  have $\mu(B_r)=\mathfrak b_\kappa r^{2\kappa+2}$ where $\mathfrak b_\kappa=\left[2^{\kappa+1}(\kappa+1)\Gamma(\kappa+1)\right]^{-1}$, and for $f\in L^1_{loc}(\mu)$, the following analogue of the Lebesgue differentiation theorem (see  \cite{S}).

\begin{equation}
\lim_{r\rightarrow 0}\frac{1}{\mu(B_r)}\int_{B_r}\left|\tau_{x}f(y)-f(x)\right|d\mu(y)=0\ \ \text{ for a. e. }x\in\mathbb R\label{translate1}
\end{equation}
and
\begin{equation}
\lim_{r\rightarrow 0}\frac{1}{\mu(B_r)}\int_{B_r}\tau_{x}f(y)d\mu(y)=f(x)\ \ \text{ for a. e. }x\in\mathbb R.\label{translate2}
\end{equation}

 For $x\in\mathbb R $ and $r>0$, the map $y\mapsto\tau_{x}\chi_{B_r}(y)$  is supported in $B(x,r)$ and 
 \begin{equation}
 0\leq \tau_{x}\chi_{B_r}(y)\leq\min\left\{1,\frac{2C_\kappa}{2\kappa+1}\left(\frac{r}{\left|x\right|}\right)^{2\kappa+1}\right\},\  y \in B(x,r),\label{**}
 \end{equation}
 as proved in \cite{GM3}.
 
Let $f$ and $g$ be two continuous functions on $\mathbb R$ with compact support. We
define the generalized convolution $\ast_{\kappa}$ of $f$ and $g$ by
\begin{equation*}
f\ast_{\kappa}g(x)=\int_{\mathbb R}\tau_{x}f(-y)g(y)d\mu(y).
\end{equation*}
The generalized convolution $\ast_{\kappa}$ is associative and commutative \cite{R4}. We also have the following results.
\begin{prop}[see Soltani \cite{S}]\label{young}
\begin{enumerate}
\item For all $x\in\mathbb R$, the operator $\tau_{x}$ extends to $L^{p}(\mu)$, $p\geq 1$, and 
\begin{equation}
\left\|\tau_{x}f\right\|_{p}\leq 4\left\|f\right\|_{p}\label{conttranslation}
\end{equation}
for all $f\in L^{p}(\mu)$.
\item Assume that $p, q, r \in\left[1,\infty\right]$ and satisfy $\frac{1}{p} +\frac{1}{q} = 1+\frac{1}{r}$. Then the generalized convolution defined on
$\mathcal C_{c}\times\mathcal C_{c}$, extends to a continuous map from $L^{p}(\mu)\times L^{q}(\mu)$ to $L^{r}(\mu)$, and we have
\begin{equation*}
\left\|f\ast_{\kappa}g\right\|_{r}\leq 4\left\|f\right\|_{p}\left\|g\right\|_{q}.
\end{equation*}
\end{enumerate}
\end{prop}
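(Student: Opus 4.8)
The plan is to reduce both assertions to a single nontrivial estimate — the boundedness of $\tau_x$ on $L^\infty(\mu)$ — and then to get everything else from Plancherel, duality, Riesz--Thorin interpolation and a routine Young-type argument.

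For part (1), the case $p=2$ is immediate from the definition: since the Dunkl transform is an isometric isomorphism of $L^2(\mu)$ (hence preserves the inner product), one has $\|\tau_x f\|_2=\|\E(ix\cdot)\,\mathcal F f\|_2\leq\|\mathcal F f\|_2=\|f\|_2$, using $|\E_\kappa(ix\xi)|\leq 1$. The crucial point is the bound on $L^\infty(\mu)$, and here one cannot avoid the explicit rank-one structure attached to the reflection group $\mathbb Z_2$: there is a product formula $\E(ix\xi)\E(iy\xi)=\int_{\mathbb R}\E(iz\xi)\,d\nu_{x,y}(z)$ with an explicit signed kernel, giving the integral representation $\tau_x f(y)=\int_{\mathbb R}f(z)\,d\nu_{x,y}(z)$; one computes $\nu_{x,y}$ (it is supported in $B(x,|y|)\cup\bigl(-B(x,|y|)\bigr)$, with a density smooth away from the endpoints) and checks that its total variation is bounded uniformly in $x,y$, say $\|\nu_{x,y}\|\leq 4$. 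Granting this, $\|\tau_x f\|_\infty\leq 4\|f\|_\infty$. Reading off the transform-side definition shows that the transpose of $\tau_x$ with respect to $\mu$ is $\tau_{-x}$, and parity of $\mu$ gives $\|\tau_{-x}\|_{L^\infty\to L^\infty}=\|\tau_x\|_{L^\infty\to L^\infty}$; hence by duality $\|\tau_x f\|_1\leq 4\|f\|_1$. The Riesz--Thorin theorem between $p=1$ and $p=\infty$ then yields $\|\tau_x f\|_p\leq 4\|f\|_p$ for every $p\in[1,\infty]$, first on $\mathcal C_c$ and then on all of $L^p(\mu)$ by density (which also provides the asserted extension of $\tau_x$ to $L^p(\mu)$).

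For part (2) it suffices, by density of $\mathcal C_c$, to argue for $f,g\in\mathcal C_c$, establishing two endpoint estimates and interpolating. When $r=q$ and $p=1$: writing $f\ast_\kappa g(x)=\int\tau_x f(-y)g(y)\,d\mu(y)$, using $\tau_x f(-y)=\tau_{-y}f(x)$ and Minkowski's integral inequality, together with part (1) and the evenness of $\mu$,
\begin{equation*}
\|f\ast_\kappa g\|_q\leq\int_{\mathbb R}\|\tau_{-y}f\|_q\,|g(y)|\,d\mu(y)\leq 4\|f\|_q\|g\|_1 .
\end{equation*}
When $r=\infty$ and $p=q'$: H\"older's inequality and part (1) give $|f\ast_\kappa g(x)|\leq\|\tau_x f(-\cdot)\|_{q'}\|g\|_q=\|\tau_x f\|_{q'}\|g\|_q\leq 4\|f\|_{q'}\|g\|_q$. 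Fixing $g\in L^q(\mu)$ and interpolating the linear map $f\mapsto f\ast_\kappa g$ between $L^1(\mu)\to L^q(\mu)$ and $L^{q'}(\mu)\to L^\infty(\mu)$, both of norm $\leq 4\|g\|_q$, produces $\|f\ast_\kappa g\|_r\leq 4\|f\|_p\|g\|_q$ exactly on the scale $\frac1p+\frac1q=1+\frac1r$; the symmetric role of $f$ and $g$ is consistent with commutativity of $\ast_\kappa$. Extending from $\mathcal C_c\times\mathcal C_c$ to $L^p(\mu)\times L^q(\mu)$ is then routine.

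The main obstacle is the uniform total-variation bound for the translation kernel $\nu_{x,y}$, equivalently the $L^\infty$--$L^\infty$ bound for $\tau_x$: everything else (Plancherel at $p=2$, duality at $p=1$, interpolation, and the Young argument) is soft once that is in hand. The fact that the constant is $4$ rather than $1$ reflects precisely that $\tau_x$ fails to be positivity-preserving for $\kappa>0$, so any route that tries to bypass the explicit rank-one kernel is unlikely to succeed.
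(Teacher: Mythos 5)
The paper offers no proof of this proposition: it is imported from Soltani \cite{S} as a black box, so there is no internal argument to measure yours against. Judged on its own, the architecture of your proposal is exactly the one used in the literature, and the soft steps are all correct: Plancherel and $\vert\E(ix\xi)\vert\leq 1$ at $p=2$; the identity $\int \tau_xf\cdot\overline{g}\,d\mu=\int f\cdot\overline{\tau_{-x}g}\,d\mu$ (which you justify correctly from $\overline{\E(ix\xi)}=\E(-ix\xi)$) to pass by duality from $L^\infty$ to $L^1$; Riesz--Thorin in between; and the usual two endpoints $L^1\ast_\kappa L^q\to L^q$ (Minkowski plus $\tau_xf(-y)=\tau_{-y}f(x)$ and evenness of $\mu$) and $L^{q'}\ast_\kappa L^q\to L^\infty$ (H\"older), interpolated in $f$ for fixed $g$, which lands precisely on the Young scale $\frac1p+\frac1q=1+\frac1r$ with the constant $4$ preserved throughout.

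The gap is exactly where you locate it, and it is the entire content of the proposition: the uniform total-variation bound $\Vert\nu_{x,y}\Vert\leq 4$ is asserted (``one computes \dots and checks'') rather than proved, and your parenthetical description of the kernel is too casual to count as that check. Concretely, what is needed is R\"osler's explicit rank-one product formula, in which $d\nu_{x,y}$ is the Bessel--Kingman (probability) kernel supported on $\bigl\{z:\bigl||x|-|y|\bigr|\leq|z|\leq|x|+|y|\bigr\}$ multiplied by the signed density $1-\sigma_{x,y,z}+\sigma_{z,x,y}+\sigma_{z,y,x}$ with $\sigma_{x,y,z}=\frac{x^2+y^2-z^2}{2xy}$; the point is that each $\sigma$ has modulus at most $1$ on that support, whence the density is bounded by $4$ and the constant $4$ in the statement is exactly this $4=1+1+1+1$. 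Be warned that more naive-looking pointwise bounds fail: for instance the factor $\frac{x-y}{\sqrt{x^2+y^2-2xyt}}$ appearing in the equivalent $t$-integral form of the formula is \emph{not} bounded by $1$ when $xy<0$, so ``smooth density, bounded variation'' is not something one can wave at. Once that single estimate is written out, the rest of your argument closes both parts of the proposition correctly (with the minor caveat that the extension to $L^\infty$ is given by the kernel formula itself rather than by density of $\mathcal C_c$, which fails for $p=\infty$).
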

It is also proved in \cite{GM2} that if $f\in L^1(\mu)$ and $g\in L^p(\mu)$, $1\leq p<\infty$, then 
\begin{equation*}
\tau_x(f\ast g)=\tau_x f\ast g=f\ast \tau_x g,x\in\mathbb{R}.
\end{equation*}

\section{Dunkl-Wiener amalgam spaces}

For $1\leq q,p\leq\infty$, the generalized amalgam space $(L^q,L^p)(\mu)$ is defined by 
 $$(L^{q},L^{p})(\mu)=\left\{f\in L^{0}(\mu):\ \left\|f\right\|_{q,p}<\infty\right\},$$
  where
  \begin{equation*}
 \left\|f\right\|_{q,p}=\left\{\begin{array}{lll}\left\|\left[\int_{\mathbb R}(\tau_{y}\left|f\right|^{q})\chi_{B_1}(x)d\mu(x)\right]^{\frac{1}{q}}\right\|_p&\text{ if }&q<\infty\\
 \left\|\left\|f\chi_{B(y,1)}\right\|_{\infty}\right\|_p&\text{ if }&q=\infty
 \end{array}\right.,
 \end{equation*}
 and the $L^p(\mu)$-norm is taken with respect to the $y$ variable. We recover the classical Wiener amalgam spaces by taking the Lebesgue measure instead of $\mu$, and the classical translation instead of that of Dunkl.

It is easy to see that for $1\leq q\leq p\leq\infty$, $(L^{q},L^{p})(\mu)$ is a complex subspace of the space $L^{0}(\mu)$. Notice also that $(L^\infty,L^\infty)(\mu)=L^\infty(\mu)$. In fact, since 
$$\left|f\chi_B(y,1)\right|\leq \left|f\right|, y\in\mathbb R,$$
we have $\left\| f \right\|_{+\infty,+\infty} \leq \left\|f\right\|_{+\infty}.$

Conversely, we assume that $\left\|f\right\|_{\infty}>0$ (since otherwise there is nothing to prove). Fix  $0<r< \left\|f\right\|_{\infty}$. We have 
	$$\mu(\left\{x \in \mathbb{R}/  \left|f(x)\right|  > r	\right\} ) > 0.$$
	Thus, there is a compact set $K\subset\left\{x \in \mathbb{R}/  \left|f(x)\right|  > r	\right\}$ satisfying $\mu(K)>0$. Since $K \subset \cup^{n}_{i=1} B(y_{i},\frac{1}{2})$ for a finite sequence $y_{1},y_{2},\ldots,y_{n} \in K$, there exists  $1\leq i_{0}\leq n$ such that  $\mu( K \cap B(y_{i_{0} },\frac{1}{2}))> 0$. It is easy to see that 
	$$B(y_{i_0},\frac{1}{2})\subset B(y,1),\text{ for all } y\in B(y_{i_0},\frac{1}{2}).$$
	It comes that  
	$$B(y_{i_0},\frac{1}{2})\subset\left\{y\in\mathbb R/\left\|f\chi_{B(y,1)}\right\|_{\infty}>r\right\},$$
	so that 
	$$r<\left\|f\right\|_{\infty,\infty}.$$
This implies that $\left\|f\right\|_{\infty}\leq\left\|f\right\|_{\infty,\infty}$. 

H\"older's inequality is also valid in these spaces. 
\begin{prop}
	Let $1\leq q_{1} , p_{1}\leq\infty$ and $1\leq q_{2} , p_{2}\leq \infty$ such that
	$$\frac{1}{q_{1}} + \frac{1}{q_{2}} = \frac{1}{q} \leq 1\text{ and }\frac{1}{p_{1}} + \frac{1}{p_{2}} = \frac{1}{p} \leq 1.$$
	If $f\in(L^{q_{1}}, L^{p_{1}}) (\mu)$ and $g\in(L^{q_{2}}, L^{p_{2}})(\mu) $ then $fg\in(L^{q}, L^{p}) (\mu)$. Moreover, we have 
	$$\left\| fg \right\|_{q,p} \leq \left\| f \right\|_{q_{1},p_{1}}  \left\| g \right\|_{q_{2},p_{2}}.$$
\end{prop}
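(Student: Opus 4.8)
The plan is to reduce the statement to two applications of the ordinary Hölder inequality: first the "inner" Hölder inequality in $L^{q_i}(\mu)$ for fixed $y$, and then the "outer" Hölder inequality in $L^{p_i}(\mu)$ in the $y$ variable. I will treat the case $q<\infty$ in detail, the case $q=\infty$ being analogous (and slightly simpler, using $\|(fg)\chi_{B(y,1)}\|_\infty\le \|f\chi_{B(y,1)}\|_\infty\|g\chi_{B(y,1)}\|_\infty$).

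First I would fix $y\in\mathbb R$ and look at the inner integral $\int_{\mathbb R}\tau_y(|fg|^q)\chi_{B_1}(x)\,d\mu(x)$. The key point is that for fixed $y$, the map $h\mapsto \int_{\mathbb R}\tau_y(|h|)\chi_{B_1}\,d\mu$ behaves like an $L^1$-type norm: since $\tau_y$ is positivity-preserving and, by \eqref{translate1}–type reasoning (or directly from the integral representation of $\tau_y$ together with $\tau_y\chi_{B_1}\ge 0$), one has $\tau_y(|h_1 h_2|)\le$ a product that can be estimated via Hölder with exponents $q_1/q,q_2/q$ applied "inside" $\tau_y$. More cleanly: writing $\phi_f(y):=\left(\int_{\mathbb R}\tau_y(|f|^q)\chi_{B_1}\,d\mu\right)^{1/q}$, I would show $\phi_{fg}(y)\le \phi_f^{(q_1)}(y)\,\phi_g^{(q_2)}(y)$ where $\phi_f^{(q_1)}(y)=\left(\int \tau_y(|f|^{q_1})\chi_{B_1}\,d\mu\right)^{1/q_1}$, by applying Hölder's inequality with exponents $\frac{q_1}{q}$ and $\frac{q_2}{q}$ to the measure $\tau_y(\,\cdot\,)(x)\chi_{B_1}(x)\,d\mu(x)$ — this uses that $|fg|^q=(|f|^{q_1})^{q/q_1}(|g|^{q_2})^{q/q_2}$ and that $\tau_y$ applied to a sum is the sum of $\tau_y$'s, so one can pass Hölder through the generalized translation via its nonnegative integral kernel. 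That is $\phi_{fg}(y)\le \|f\|_{q_1,\text{loc at }y}\,\|g\|_{q_2,\text{loc at }y}$.

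Then I would apply Hölder's inequality in $L^p(\mu)$ with exponents $\frac{p_1}{p}$ and $\frac{p_2}{p}$ to the product $\phi_{fg}(y)\le \phi_f^{(q_1)}(y)\phi_g^{(q_2)}(y)$, obtaining
\begin{equation*}
\|fg\|_{q,p}=\left\|\phi_{fg}\right\|_p\le \left\|\phi_f^{(q_1)}\phi_g^{(q_2)}\right\|_p\le \left\|\phi_f^{(q_1)}\right\|_{p_1}\left\|\phi_g^{(q_2)}\right\|_{p_2}=\|f\|_{q_1,p_1}\|g\|_{q_2,p_2}.
\end{equation*}
The degenerate cases where one or more of $q,q_1,q_2,p,p_1,p_2$ equals $\infty$ (so the corresponding exponent in one of the two Hölder steps is $\infty$ or the conjugate is $1$) would be handled separately but are routine, using that $(L^\infty,L^\infty)(\mu)=L^\infty(\mu)$ as established above and the obvious pointwise bound $\tau_y(|fg|^q)\le \|f\|_\infty^q\,\tau_y(|g|^q)$ when $q_1=\infty$.

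The main obstacle is the first step: justifying that Hölder's inequality may be "pushed through" the Dunkl translation $\tau_y$, i.e. that $\tau_y(h_1h_2)\le$ the pointwise product bound needed, or equivalently that $\int \tau_y(|f|^{q_1}|g|^{q_2})^{q/q_1\cdot q_1/q}\cdots$ splits correctly. Since $\tau_y$ is given (on nonnegative functions) by integration against a nonnegative kernel $d\nu_{x,y}$, namely $\tau_y h(x)=\int h\,d\nu_{x,y}$ with $\nu_{x,y}$ a positive measure of total mass bounded consistently with \eqref{conttranslation}, one applies Hölder to the measure $d\nu_{x,y}$ for each fixed $x$ and then integrates in $x$ against $\chi_{B_1}\,d\mu$, applying Hölder once more; care is only needed to ensure the iterated use of Hölder produces exactly the exponents $q_1/q$, $q_2/q$ and then $p_1/p$, $p_2/p$, which works precisely because $1/q_1+1/q_2=1/q$ and $1/p_1+1/p_2=1/p$. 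No constant worse than $1$ appears because each Hölder step is sharp in this form.
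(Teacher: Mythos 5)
Your overall architecture --- an inner H\"older inequality for fixed $y$ yielding $\phi_{fg}(y)\le\phi_f^{(q_1)}(y)\,\phi_g^{(q_2)}(y)$, followed by an outer H\"older inequality in the $y$ variable with exponents $p_1/p$, $p_2/p$ --- is exactly the paper's proof, and the two target inequalities are correct. The gap is in how you justify the inner step. You propose to push H\"older \emph{through} $\tau_y$ by writing $\tau_y h(x)=\int h\,d\nu_{x,y}$ with ``$\nu_{x,y}$ a positive measure'' and applying H\"older to $d\nu_{x,y}$ for each fixed $x$. That premise is false: the rank-one Dunkl translation is associated with a \emph{signed} hypergroup (this is precisely the content of R\"osler's paper \cite{R4} cited here), the kernel $\nu_{x,y}$ takes negative values, and $\tau_y$ does not preserve positivity. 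Consequently the pointwise bound $\tau_y(|fg|^q)(x)\le(\tau_y|f|^{q_1}(x))^{q/q_1}(\tau_y|g|^{q_2}(x))^{q/q_2}$ that your first H\"older application would produce is unjustified, and its right-hand side need not even be well defined, since $\tau_y|f|^{q_1}$ can be negative on a set of positive measure.

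The repair --- and it is what the paper does --- is to never apply H\"older inside $\tau_y$. Move the translation onto the characteristic function by the symmetry of the translation operator,
\begin{equation*}
\int_{\mathbb R}\tau_y\bigl(|fg|^q\bigr)(x)\,\chi_{B_1}(x)\,d\mu(x)=\int_{\mathbb R}|f(x)|^{q}|g(x)|^{q}\,\tau_{-y}\chi_{B_1}(x)\,d\mu(x),
\end{equation*}
and observe that, $B_1$ being a symmetric interval, $\tau_{-y}\chi_{B_1}\ge 0$ by (\ref{**}). Hence $\tau_{-y}\chi_{B_1}(x)\,d\mu(x)$ is a genuine positive measure and a \emph{single} application of the classical H\"older inequality with exponents $q_1/q$, $q_2/q$ gives $\phi_{fg}(y)\le\phi_f^{(q_1)}(y)\,\phi_g^{(q_2)}(y)$ after transferring the translations back. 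Your outer H\"older step and your treatment of the cases $q_i=\infty$ are fine once this is fixed; note that the nonnegativity of $\tau_{-y}\chi_{B_1}$ is also what makes $\int\tau_y(|f|^q)\chi_{B_1}\,d\mu$ nonnegative in the first place, so this observation is not optional anywhere in the argument.
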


\begin{proof}
Let $f\in (L^{q_{1}}, L^{p_{1}})(\mu)$ and $g\in (L^{q_{2}}, L^{p_{2}})(\mu) $.

We suppose that $q_1<\infty$ and $q_2<\infty$. For almost every $y$ in $\mathbb R$, the maps
$x\mapsto \left|f(x)\right|(\tau_{y}\chi_{B(0,1)}(-x))^{\frac{1}{q_1}}$ and $x\mapsto \left|g(x)\right|(\tau_{y}\chi_{B(0,1)}(-x))^{\frac{1}{q_2}}$ are respectively in $L^{q_1}(\mu)$ and $L^{q_2}(\mu)$ so that, applying H\"older inequality in Lebesgue space, we have
\begin{eqnarray*}\left[\int_{\mathbb R}(\left|f(x)\right| \left|g(x)\right|)^q(\tau_{y}\chi_{B(0,1)}(-x)) d\mu(x)\right]^{\frac{1}{q}}&\leq&\left[ \int_{\mathbb R}\left|f(x)\right|^{q_1} \tau_{y}\chi_{B(0,1)}(-x)) d\mu(x)\right]^{\frac{1}{q_1}}\\
&\times&\left[\int_{\mathbb R} \left|g(x)\right|^{q_2}\tau_{y}\chi_{B(0,1)}(-x) d\mu(x)\right]^{\frac{1}{q_2}}.
\end{eqnarray*}
Taking the $L^p(\mu)$-norm of both sides and applying once more the H\"older inequality on the term in the right hand side, we obtain the desired result.

We suppose known that some $q_i=\infty$ let say $q_1$, and $q_2<\infty$. Then $q=q_2$ and the result follows from the following inequality
$$\begin{aligned}&\left[\int_{\mathbb R}(\left|f(x)\right| \left|g(x)\right|)^{q_2}(\tau_{y}\chi_{B(0,1)}(-x)) d\mu(x)\right]^{\frac{1}{q_2}}\\
&\qquad \qquad \qquad \qquad \qquad  \qquad   \leq \left\|f\chi_{B(y,1)}\right\|_\infty\left[\int_{\mathbb R} \left|g(x)\right|^{q_2}(\tau_{y}\chi_{B(0,1)}(-x)) d\mu(x)\right]^{\frac{1}{q_2}},
\end{aligned}$$
by taking the $L^p(\mu)$-norm of both sides and applying H\"older inequality for $p_i$ indexes. 

In the case $q_1=q_2=\infty$, the result is immediate.
\end{proof}
\begin{prop}\label{norm1} 
Let  $1\leq q\leq p\leq \infty $. The maps $ L^{0}(\mu)\ni f\mapsto \left\|f\right\|_{q,p}$ 
define a norm in $(L^{q},L^{p})(\mu)$.
\end{prop}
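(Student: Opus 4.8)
The plan is to verify the three norm axioms for $\|\cdot\|_{q,p}$: absolute homogeneity, the triangle inequality, and positive-definiteness (separation of points), treating the subcases $q<\infty$ and $q=\infty$ in parallel. Homogeneity is immediate: for a scalar $\lambda$ one pulls $|\lambda|^q$ out of the inner integral $\int_{\mathbb R}(\tau_y|\lambda f|^q)\chi_{B_1}\,d\mu$ (or $|\lambda|$ out of the inner sup when $q=\infty$), then out of the $L^p(\mu)$-norm in $y$; nothing here uses properties of $\tau_y$ beyond positivity and linearity on $L^1$.

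The substantive point is the triangle inequality. For fixed $y$ with $q<\infty$, I would view the inner quantity $\big(\int_{\mathbb R}(\tau_y|f|^q)\chi_{B_1}\,d\mu\big)^{1/q}$ as the $L^q$-norm of $|f|$ against the finite positive measure $d\nu_y := (\tau_y\chi_{B_1})\,d\mu$ — here using that $\tau_y\chi_{B_1}\ge 0$ (from \eqref{**}) and that, by Theorem~2.7(1) applied to $\chi_{B_1}\in L^1(\mu)$ together with the symmetry $\tau_y\chi_{B_1}(x)=\tau_x\chi_{B_1}(y)$, the function $x\mapsto \tau_y\chi_{B_1}(x)$ is a genuine nonnegative $L^1(\mu)$ function; more precisely I will write $\int(\tau_y|f|^q)\chi_{B_1}\,d\mu = \int |f|^q\,(\tau_y\chi_{B_1})\,d\mu$ by self-adjointness of $\tau_y$ with respect to $\mu$ (equivalently, by unfolding the definition of $\ast_\kappa$). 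Then Minkowski's inequality in $L^q(\nu_y)$ gives
$$
\Big(\int |f+g|^q\,d\nu_y\Big)^{1/q}\le \Big(\int |f|^q\,d\nu_y\Big)^{1/q}+\Big(\int |g|^q\,d\nu_y\Big)^{1/q}
$$
pointwise in $y$; taking the $L^p(\mu)$-norm in $y$ and applying Minkowski's inequality once more in $L^p(\mu)$ yields $\|f+g\|_{q,p}\le \|f\|_{q,p}+\|g\|_{q,p}$. When $q=\infty$ the inner quantity is $\|f\chi_{B(y,1)}\|_\infty$, which is already a norm in $f$ for each $y$, so only the outer $L^p(\mu)$-Minkowski step is needed.

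For positive-definiteness I must show $\|f\|_{q,p}=0 \Rightarrow f=0$ $\mu$-a.e. If $\|f\|_{q,p}=0$ then for $\mu$-a.e. $y$ the inner quantity vanishes, i.e. $\int |f|^q(\tau_y\chi_{B_1})\,d\mu=0$ (resp. $\|f\chi_{B(y,1)}\|_\infty=0$). Using \eqref{translate2} (or directly that $\tau_y\chi_{B_1}(x)>0$ on a neighborhood structure covering $\mathbb R$ as $y$ varies), one deduces $|f|=0$ $\mu$-a.e.; concretely, fix $r$ and $y$ so that by \eqref{translate2} $\frac1{\mu(B_r)}\int_{B_r}\tau_x\chi_{B_1}(y)\,d\mu(y)\to \chi_{B_1}(x)$, giving a positive lower bound for $\int (\tau_y\chi_{B_1})\,d\mu$ over a suitable set of $y$, which forces $f$ to vanish there; letting the sets exhaust $\mathbb R$ finishes it. The $q=\infty$ case is the covering argument already carried out in Section~3 for $(L^\infty,L^\infty)(\mu)=L^\infty(\mu)$, reused with the $L^p(\mu)$-norm in place of the $L^\infty$ one. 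The main obstacle is the rigorous identification of the inner expression with $\|f\|_{L^q(\nu_y)}$ — that is, justifying $\int(\tau_y|f|^q)\chi_{B_1}\,d\mu=\int|f|^q\,(\tau_y\chi_{B_1})\,d\mu$ for general nonnegative measurable $f$ (approximating by $\mathcal C_c$ functions and invoking the self-adjointness/positivity of $\tau_y$ and monotone convergence) and checking $\nu_y$ is a finite measure so that Minkowski applies; once that is in hand the rest is routine.
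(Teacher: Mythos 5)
Your homogeneity and triangle-inequality arguments are fine and are essentially the paper's own: the paper likewise rewrites the inner integral as the $L^q(\mu)$-norm of $|f|\,(\tau_y\chi_{B_1}(-\cdot))^{1/q}$ (equivalently, the $L^q$-norm against the finite positive measure $\tau_y\chi_{B_1}\,d\mu$) and applies Minkowski twice. The genuine problem is in your positive-definiteness step. Your concrete mechanism --- ``a positive lower bound for $\int(\tau_y\chi_{B_1})\,d\mu$ over a suitable set of $y$, which forces $f$ to vanish there'' --- is a non sequitur: $\int\tau_y\chi_{B_1}\,d\mu=\mu(B_1)$ for \emph{every} $y$, and a lower bound on this integral says nothing about $f$. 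What you actually need is \emph{pointwise} positivity of $x\mapsto\tau_y\chi_{B_1}(x)$ on sets that, as $y$ ranges over the complement of the exceptional null set $N$, cover $\mathbb R$ up to a null set. That is not provided by anything quoted in the paper (only the upper bound \eqref{**} and the support property are stated), and extracting it from \eqref{translate2} applied to $\chi_{B_1}$ runs into a quantifier mismatch: the differentiation theorem gives, for a.e.\ $x$, a positive-measure set of $y$ near $0$ with $\tau_y\chi_{B_1}(x)>0$, whereas you need finitely or countably many good $y\notin N$ for which $\tau_y\chi_{B_1}(\cdot)>0$ a.e.\ on prescribed sets; a Fubini argument only yields positivity on a positive-measure slice, not a.e.

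The paper closes this step differently and more directly: fixing $y\notin N$, it uses the monotonicity $\tau_{-y}\chi_{B_\eta}\le\tau_{-y}\chi_{B_1}$ for $\eta\le 1$ to get
$$\frac{1}{\mu(B_\eta)}\int_{B_\eta}(\tau_y|f|^q)(x)\,d\mu(x)\le\frac{1}{\mu(B_\eta)}\int_{\mathbb R}|f|^q\,\tau_{-y}\chi_{B_1}\,d\mu=0,$$
and then applies the Lebesgue-type differentiation theorem \eqref{translate2} to $|f|^q$ itself, so that the left-hand side tends to $|f|^q(y)$ as $\eta\to 0$; hence $|f(y)|=0$ for a.e.\ $y$. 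Redirecting \eqref{translate2} from $\chi_{B_1}$ to $|f|^q$ is the missing idea; with it your argument closes, without it the separation axiom is not established.
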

    
\begin{proof}
 Let $f$ be a $\mu$-measurable function such that  $\left\|f\right\|_{q,p}=0$.
 \begin{enumerate}
 \item Suppose $q<\infty$, then
 \begin{equation*}
 0=\left\|f\right\|_{q,p}=\left\|\left[\int_{\mathbb R}(\tau_{(\cdot)}\left|f\right|^{q})\chi_{B_1}(x)d\mu(x)\right]^{\frac{1}{q}}\right\|_{p}
 \end{equation*}
implies that there exists a $\mu$-null set $N$ such that for all $y\in\mathbb R\setminus N$, we have 
$$0=\int_{\mathbb R}\tau_y\left|f(x)\right|^q\chi_{B_1}(x)d\mu(x)=\int_{\mathbb R}\left|f(x)\right|^q\tau_{-y}\chi_{B_1}(x)d\mu(x). $$
Thus for $y\in\mathbb R\setminus N$, there exists  a subset $N_{y}\subset\mathbb R$ with $\mu(N_{y})=0$, such that for $x\in\mathbb R\setminus N_{y}$ 
$$\left|f\right|^{q}\tau_{-y}\chi_{B_1}(x)=0.$$
Fix $y\in\mathbb R\setminus N$. For every $\eta\leq 1$ we have $B_\eta\subset B_1$, so that $\tau_{-y}(\chi_{B_1}-\chi_{B_\eta})\geq 0$. Hence 
\begin{eqnarray*}
\frac{1}{\mu(B_\eta)}\int_{B_\eta}(\tau_{y}\left|f\right|^{q})(x)d\mu(x)&=&\frac{1}{\mu(B_\eta)}\int_{\mathbb R}\left|f\right|^{q}(x)\tau_{-y}\chi_{B_\eta}(x)d\mu(x)\\
&\leq& \frac{1}{\mu(B_\eta)}\int_{\mathbb R}\left|f\right|^{q}(x)\tau_{-y}\chi_{B_1}(x)d\mu(x)=0.
\end{eqnarray*}
It comes from (\ref{translate2}) that
\begin{equation*}
0=\lim_{\eta\rightarrow 0}\frac{1}{\mu(B_\eta)}\int_{B_\eta}(\tau_{y}\left|f\right|^{q})(x)d\mu(x)=\left|f\right|^{q}(y).
\end{equation*} 
Thus $f=0$ $\mu$-a.e.

For the triangular inequality, we have 
\begin{equation*}
\left\|f+g\right\|_{q,p}
=\left\|\left[\int_{\mathbb R}\left(\left|f(x)+g(x)\right|(\tau_{(\cdot)}\chi_{B_1}(-x))^{\frac{1}{q}}\right)^qd\mu(x)\right]^{\frac{1}{q}}\right\|_{p}
\end{equation*}
for $f,g\in(L^q,L^p)(\mu)$, and the result follows by applying respectively the triangular inequality in $L^q(\mu)$ and $L^p(\mu)$.
\item If $q=p=\infty$ then there is nothing to prove since $(L^\infty,L^\infty)(\mu)=L^\infty(\mu)$.
\end{enumerate}
Positive homogeneity is immediate.
\end{proof}

The next result is a generalization of \cite[Proposition 1.2.8]{Fe}. The proof is only an adaptation of that given there. We give it for the sake of completeness.

\begin{prop} Let $1\leq  p,q\leq \infty$. The space $ ((L^{q}, L^{p}) (\mu),\left\| \cdot \right\|_{q,p} ) $ is a complex Banach space.
\end{prop}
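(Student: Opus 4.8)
The plan is to invoke the standard criterion that a normed space is complete precisely when every absolutely convergent series converges, using that $\|\cdot\|_{q,p}$ is already known to be a norm: the argument of Proposition~\ref{norm1} (its definiteness part via \eqref{translate2} and its triangle inequality part via the triangle inequalities in $L^q(\mu)$ and $L^p(\mu)$) uses no relation between $p$ and $q$, so it gives a norm for every $1\le p,q\le\infty$. Thus I would fix a sequence $(f_n)$ in $(L^q,L^p)(\mu)$ with $M:=\sum_n\|f_n\|_{q,p}<\infty$ and show that $\sum_n f_n$ converges in $(L^q,L^p)(\mu)$.

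The key step is a Minkowski-type inequality for countable sums, $\bigl\|\sum_n|f_n|\bigr\|_{q,p}\le\sum_n\|f_n\|_{q,p}$. For $q<\infty$ I would first use the adjoint identity $\int_{\mathbb R}(\tau_y|h|^q)\chi_{B_1}(x)\,d\mu(x)=\int_{\mathbb R}|h(x)|^q\,\tau_{-y}\chi_{B_1}(x)\,d\mu(x)$ already exploited in the proof of Proposition~\ref{norm1}, so that, for fixed $y$, the inner quantity is the $L^q(\mu)$-norm of $|h|\,(\tau_{-y}\chi_{B_1})^{1/q}$ (legitimate because $\tau_{-y}\chi_{B_1}\ge0$ by \eqref{**}). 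Applying the triangle inequality in $L^q(\mu)$ to partial sums and letting the number of terms tend to infinity by monotone convergence gives, with $g:=\sum_n|f_n|$, the pointwise-in-$y$ bound $\bigl(\int(\tau_y g^q)\chi_{B_1}\,d\mu\bigr)^{1/q}\le\sum_n\bigl(\int(\tau_y|f_n|^q)\chi_{B_1}\,d\mu\bigr)^{1/q}$; taking $L^p(\mu)$-norms in $y$ and again using the triangle inequality together with monotone convergence in $n$ yields $\|g\|_{q,p}\le M$. The case $q=\infty$ is identical, with the $L^q$-step replaced by the subadditivity of the essential supremum over each $B(y,1)$.

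Next I would deduce $g<\infty$ $\mu$-a.e. Since $y\mapsto\bigl(\int(\tau_y g^q)\chi_{B_1}\,d\mu\bigr)^{1/q}$ belongs to $L^p(\mu)$, it is finite for $\mu$-a.e.\ $y$, so $\int|g(x)|^q\tau_{-y}\chi_{B_1}(x)\,d\mu(x)<\infty$ for such $y$, which forces $g(x)<\infty$ for $\mu$-a.e.\ $x\in\{\tau_{-y}\chi_{B_1}>0\}$. A Fubini argument on the set $\{(x,y):g(x)=\infty,\ \tau_{-y}\chi_{B_1}(x)>0\}$, combined with $\int_{\mathbb R}\tau_{-y}\chi_{B_1}(x)\,d\mu(y)=\int_{\mathbb R}\tau_x\chi_{B_1}(y)\,d\mu(y)=\mu(B_1)>0$ for every $x$ (which follows from $\tau_a\chi_{B_1}(b)=\tau_b\chi_{B_1}(a)$, invariance of $\mu$ under $y\mapsto-y$, and $\mathcal F(\tau_x\chi_{B_1})(0)=\mathcal F(\chi_{B_1})(0)$, valid since $\tau_x\chi_{B_1}\in L^1(\mu)$ by Proposition~\ref{young}), then upgrades this to $g<\infty$ $\mu$-a.e.\ on all of $\mathbb R$. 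Hence $f:=\sum_n f_n$ converges $\mu$-a.e., $|f|\le g$, so $f\in(L^q,L^p)(\mu)$ with $\|f\|_{q,p}\le M$; applying the Minkowski-type inequality to the tails gives $\bigl\|f-\sum_{n\le N}f_n\bigr\|_{q,p}\le\sum_{n>N}\|f_n\|_{q,p}\to0$, so the series converges in $(L^q,L^p)(\mu)$, which proves completeness.

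I expect the one genuinely delicate point — the adaptation away from \cite[Proposition~1.2.8]{Fe} — to be that $\tau_{-y}\chi_{B_1}$ is not an indicator function, so the classical reasoning with truncations to $B(y,1)$ must be rerun through the adjoint identity and the positivity \eqref{**}, and the $\mu$-a.e.\ finiteness of $g$ requires the Fubini argument above instead of being immediate; the remaining steps are a routine transcription of the classical proof.
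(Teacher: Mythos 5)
Your proof is correct, and at the top level it follows the same strategy as the paper: both verify the ``absolutely convergent series converge'' criterion for completeness. The difference lies in how the limit function is produced. The paper transfers the problem to the fibers: from $\sum_n\bigl\Vert\,\Vert f_n\chi_{B(y,1)}\tau_{-y}^{1/q}\chi_{B_1}\Vert_q\bigr\Vert_p<\infty$ it extracts a null set $N$ off which $\sum_n f_n\chi_{B(y,1)}\tau_{-y}^{1/q}\chi_{B_1}$ converges in the complete space $L^q(\mu)$, and then defines $f=\sum_n f_n$ on the set $E$ of points where some weight $\chi_{B(y,1)}\tau_{-y}\chi_{B_1}$ with $y\notin N$ is nonzero (and $f=0$ elsewhere), after which the norm estimates for $f$ and for the tails are read off. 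You instead prove a countable Minkowski inequality $\bigl\Vert\sum_n|f_n|\bigr\Vert_{q,p}\le\sum_n\Vert f_n\Vert_{q,p}$ directly (via the adjoint identity, positivity \eqref{**} of $\tau_{-y}\chi_{B_1}$, and monotone convergence) and then obtain $\mu$-a.e.\ absolute convergence of $\sum_n f_n$ on all of $\mathbb R$ from a Fubini argument based on $\int_{\mathbb R}\tau_{-y}\chi_{B_1}(x)\,d\mu(y)=\mu(B_1)>0$. Your route buys a cleaner statement (a genuine pointwise a.e.\ limit on $\mathbb R$, rather than a function defined piecewise relative to $E$) and makes explicit the a.e.-finiteness point that the paper handles only implicitly through its choice of $E$; the paper's route is closer to the classical amalgam argument of \cite{Fe} and leans on the completeness of $L^q(\mu)$ rather than on a summed triangle inequality. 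Both are sound, and your tail estimate $\bigl\Vert f-\sum_{n\le N}f_n\bigr\Vert_{q,p}\le\sum_{n>N}\Vert f_n\Vert_{q,p}$ coincides with the paper's.
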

\begin{proof} Let $(f_{n})_{n \ge 1}$ be a sequence of elements of $(L^{q}, L^{p}) (\mu)$ such that  
	\begin{equation}
	\sum\limits_{n \ge 1} \left\| f_{n} \right\|_{q,p} < +\infty .
	\label{banachs}
	\end{equation}
	
$1^{rst}$ case:  we suppose that $q< \infty$.

We have 
		\begin{eqnarray*}
			\sum\limits_{n \ge 1} \left\| f_{n} \right\|_{q,p} 
			&=& \sum\limits_{n \ge 1} \left\| \left[ \int_{\mathbb R}\left[\left|f_{n}\right|(x)\chi_{B(y,1)}(x)\tau^{\frac{1}{q}}_{-y}\chi_{B_1}(x)\right]^{q}d\mu(x)\right]^{\frac{1}{q}} \right\|_{p} \\
			&=& \sum\limits_{n \ge 1}  \left\|  \left\| f_{n} \chi_{B(y,1)}\tau^{\frac{1}{q}}_{-y}\chi_{B_1} \right\|_{q}
			\right\|_{p}<\infty.	
		\end{eqnarray*}
		Since $L^p(\mu)$ is a Banach space, there exists a measurable subset $N\subset\mathbb R$ with $\mu(N)=0$ such that  
		$$
		\sum\limits_{n \ge 1}  \left\| f_{n} \chi_{B(y,1)}\tau^{\frac{1}{q}}_{-y}\chi_{B_1} \right\|_{q}  < +\infty, \quad  y \in \mathbb{R}\setminus N  .                
		$$ 
		Hence  the series $ \sum\limits_{n \ge 1}  f_{n} \chi_{B(y,1)}\tau^{\frac{1}{q}}_{-y}\chi_{B_1}  $ converges in the Banach space $L^{q} (\mu) $ for all $  y \in \mathbb{R}\setminus N  $.
		
		Put 
		$$E=  \left\{x \in \mathbb{R} \: /  \chi_{B(y,1)} (x) \tau_{-y}\chi_{B_1}(x) \ne 0\text{ for some }y \in \mathbb{R}\setminus N \right\}$$
		
		and  $$f(x) = 
		\left\{
		\begin{array}{lll}
		\sum\limits_{n \ge 1}  f_{n}  (x)& \text{ if }&  x \in E, \\
		0          &\text{ if }& x\notin E.
		\end{array}
		\right.
		$$
		On one hand we have: 
		\begin{eqnarray*}
			\left\| f \right\|_{q,p} 
			&=& \left\| \left[ \int_{\mathbb R}\left|f(x)\chi_{B(y,1)}(x)\tau^{\frac{1}{q}}_{-y}\chi_{B_1}(x)\right|^{q}d\mu(x)\right]^{\frac{1}{q}} \right\Vert_{p} \\
			&=& \left\| \left[ \int_{\left\{
				\begin{array}{ll}
					x \in B(y,1) / \tau_{-y}\chi_{B_1}(x) \ne 0
				\end{array}
				\right\}}\left|\sum\limits_{n \ge 1}  f_{n}  (x) \tau^{\frac{1}{q}}_{-y}\chi_{B_1}(x)\right|^{q}d\mu(x)\right]^{\frac{1}{q}} \right\|_{p} \\
			&\leq& \left\| \left[ \int_{\mathbb R}\left|\sum\limits_{n \ge 1}  f_{n}  (x) \tau^{\frac{1}{q}}_{-y}\chi_{B_1}(x)\right|^{q}d\mu(x)\right]^{\frac{1}{q}} \right\|_{p} 
		\leq \sum\limits_{n \ge 1} \left\Vert   \left\Vert f_{n} \chi_{B(y,1)}\tau^{\frac{1}{q}}_{-y}\chi_{B_1} \right\Vert_{q}
		\right\Vert_{p}<\infty.
		\end{eqnarray*}
		On the other hand, we have 
		\begin{eqnarray*}
			\Vert f_- \sum\limits_{i = 1}^{n} f_{i} \Vert_{q,p} 
			&=& \left\Vert \left[ \int_{\left\{
				\begin{array}{ll}
					x \in B(y,1) / \tau_{-y}\chi_{B_1}(x) \ne 0
				\end{array}
				\right\}}\left|\sum\limits_{i> n}  f_{i}  (x) \tau^{\frac{1}{q}}_{-y}\chi_{B_1}(x)\right|^{q}d\mu(x)\right]^{\frac{1}{q}} \right\Vert_{p} \\
			&\leq& \left\Vert \left[ \int_{\mathbb R}\left|\sum\limits_{i> n}  f_{i}  (x) \tau^{\frac{1}{q}}_{-y}\chi_{B_1}(x)\right|^{q}d\mu(x)\right]^{\frac{1}{q}} \right\Vert_{p} 
		%
		\leq \sum\limits_{i>n} \Vert f_{i} \Vert_{q,p}.
		\end{eqnarray*}
		It comes from Estimate (\ref{banachs}), that  $\lim\limits_{n \to  \infty} \sum\limits_{i>n} \Vert f_{i} \Vert_{q,p} = 0$. 
		
$2^{nd}$ case: we suppose $q =  \infty$.

		Since $\sum\limits_{n \ge 1} \Vert f_{n} \Vert_{ + \infty,p} = \sum\limits_{n \ge 1} \left\|\left\|f\chi_{B(y,1)}\right\|_{+\infty}\right\|_p < + \infty$, there exists a $ \mu$-measurable subset $N$ of $\mathbb{R}$ such that 
		
		$$
		\left\{
		\begin{array}{ll}
		\mu ( N) = 0, \\
		\sum\limits_{n \ge 1}  \Vert f_{n} \chi_{B(y,1)} \Vert_{+\infty } < \infty , \quad y \in \mathbb{R}\setminus N  .                   
		\end{array}
		\right.
		$$
		Hence for all $  y \in \mathbb{R}\setminus N $, the series  $ \sum\limits_{n \ge 1}  f_{n} \chi_{B(y,1)} $ converges a.e on $\mathbb R$.  
		Since for all $y_{1},y_{2}\in \mathbb{R}\setminus N$ such that $B(y_{1},1) \cap B(y_{2},1) \ne \emptyset$, we have  
	$$\sum\limits_{n \ge 1}   f_{n} (x) \chi_{B(y_{1},1)} (x) = \sum\limits_{n \ge 1}   f_{n} (x) \chi_{B(y_{2},1)} (x),$$
	for $ \mu $-almost every $x \in B(y_{1},1) \cap B(y_{2},1)$, we put $f(x) = \sum\limits_{n \ge 1}   f_{n} (x) $. The function $f$ is well defined almost everywhere on $\mathbb R$ and
	
		$$\Vert f \Vert_{ + \infty,p} =  \left\|\left\|f\chi_{B(y,1)}\right\|_{+\infty}\right\|_p =  \left\|\left\|\sum\limits_{n \ge 1}   f_{n} \chi_{B(y,1)}\right\|_{+\infty}\right\|_p
		\sum\limits_{n \ge 1} \Vert f_{n} \Vert_{ + \infty,p} < \infty.$$
		We also have 
		$$\Vert f \chi_{B(y,1)} - \sum\limits_{i = 1}^{n} f_{i} \chi_{B(y,1)}\Vert_{ + \infty} = \Vert \sum\limits_{i >n} f_{i} \chi_{B(y,1)}\Vert_{ + \infty} \leq \sum\limits_{i >n} \Vert  f_{i} \chi_{B(y,1)}\Vert_{ + \infty}$$
		for $\mu$-almost every $y \in \mathbb{R}$ and for all positive integers $n$. Thus  
		$$\left\Vert f_- \sum\limits_{i = 1}^{n} f_{i} \right\Vert_{ + \infty,p} \leq   \sum\limits_{i> n} \left\Vert  f_{i} \right\Vert_{ + \infty,p} ,  $$
	%
	which tends to zero as $n$ goes to infinity. 
\end{proof}

It comes from the definition of the Dunkl convolution that for $1\leq q<\infty$, we have
$$\int_{B_1}(\tau_{y}\left|f\right|^{q})(x)d\mu(x)=\left(\int_{\mathbb R}(\tau_{y}\left|f\right|^{q})(x)\chi_{B_1}(-x)d\mu(x)\right)
=\left|f\right|^{q}\ast_{\kappa}\chi_{1}(y).$$
This observation couple with Young inequality in Dunkl Lebesgue spaces, allow us to prove the following result. 
\begin{prop}\label{inclaqp}Let $1\leq q\leq s\leq p\leq\infty$. We have
\begin{equation}
\left\|f\right\|_{q,p}\leq  4^{\frac{1}{q}}\left\|f\right\|_{s}\mu(B_1)^{\frac{1}{p}-\frac{1}{s}+\frac{1}{q}}.
\end{equation}
\end{prop}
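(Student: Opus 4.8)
The plan is to recognize the $(L^{q},L^{p})(\mu)$-norm of $f$ as the $L^{p/q}(\mu)$-norm of a Dunkl convolution, and then to invoke Young's inequality (the second part of Proposition~\ref{young}) with the exponents dictated by the relation $q\le s\le p$.

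Suppose first that $q<\infty$; we may assume $\left\|f\right\|_{s}<\infty$, since otherwise the inequality is trivial. By the identity recorded just above the statement,
$$\int_{\mathbb R}(\tau_{y}\left|f\right|^{q})\chi_{B_1}(x)\,d\mu(x)=\left|f\right|^{q}\ast_{\kappa}\chi_{B_1}(y),$$
together with the elementary fact that $\left\|h^{1/q}\right\|_{p}=\left\|h\right\|_{p/q}^{1/q}$ for any $h\ge 0$, we obtain
$$\left\|f\right\|_{q,p}=\left\|\,\left|f\right|^{q}\ast_{\kappa}\chi_{B_1}\,\right\|_{p/q}^{1/q}.$$
Now $\left|f\right|^{q}\in L^{s/q}(\mu)$ with $\left\|\,\left|f\right|^{q}\right\|_{s/q}=\left\|f\right\|_{s}^{q}$, and $\chi_{B_1}\in L^{b}(\mu)$ with $\left\|\chi_{B_1}\right\|_{b}=\mu(B_1)^{1/b}$ for every $b$. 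I would apply the second part of Proposition~\ref{young} with target exponent $r:=p/q\ge 1$ and first factor exponent $a:=s/q\ge 1$ (here $s=\infty$ is allowed, with $a=r=\infty$); the Young relation $\frac1a+\frac1b=1+\frac1r$ then forces $\frac1b=1+\frac qp-\frac qs$.

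The one thing to check — and essentially the only content of the argument — is that this exponent $b$ is admissible, i.e. $b\in[1,\infty]$. From $s\le p$ we get $\frac qp-\frac qs\le 0$, hence $\frac1b\le 1$; and from $\frac1s-\frac1p\le\frac1s\le\frac1q$ we get $q\!\left(\frac1s-\frac1p\right)\le 1$, hence $\frac1b\ge 0$. (When $s=p=\infty$ one reads $b=1$.) Young's inequality then yields
$$\left\|f\right\|_{q,p}\le\Bigl(4\,\left\|f\right\|_{s}^{q}\,\mu(B_1)^{1/b}\Bigr)^{1/q}=4^{1/q}\,\left\|f\right\|_{s}\,\mu(B_1)^{\frac{1}{qb}},$$
and since $\frac{1}{qb}=\frac1q\!\left(1+\frac qp-\frac qs\right)=\frac1p-\frac1s+\frac1q$, this is exactly the asserted bound. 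Finally, if $q=\infty$ then $q\le s\le p$ forces $s=p=\infty$, and the claimed inequality collapses to $\left\|f\right\|_{\infty,\infty}\le\left\|f\right\|_{\infty}$, which is the identity $(L^{\infty},L^{\infty})(\mu)=L^{\infty}(\mu)$ already established. So there is no genuine obstacle here: the proof is just the convolution rewriting plus Young's inequality, the only care needed being the bookkeeping that turns $q\le s\le p$ into the admissibility $0\le\frac1b\le 1$ of the intermediate exponent (together with the endpoint conventions when some index equals $\infty$).
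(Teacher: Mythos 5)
Your proposal is correct and follows essentially the same route as the paper: rewrite $\left\|f\right\|_{q,p}^{q}$ as $\left\|\,\left|f\right|^{q}\ast_{\kappa}\chi_{B_1}\right\|_{p/q}$ and apply the Young inequality of Proposition~\ref{young} with exponents $s/q$ and $b$ determined by $\frac{1}{b}=1+\frac{q}{p}-\frac{q}{s}$, which is exactly the paper's choice $b=\frac{ps}{qs-pq+ps}$. Your write-up is in fact slightly more complete, since you verify the admissibility $b\in[1,\infty]$ and treat the endpoint cases with infinite indices, which the paper's proof (stated only for $p<\infty$) leaves implicit.
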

\begin{proof}
Let $1\leq q\leq s\leq p<\infty$. We have 
$$\frac{ps}{qs-pq+ps}\geq 1\text{ and }\frac{1}{\frac{s}{q}}+\frac{1}{\frac{ps}{qs-pq+p\alpha}}=\frac{1}{\frac{p}{q}}+1$$
so that
\begin{equation}
\left\|f\right\|^{q}_{q,p}=\left\|\left|f\right|^{q}\ast_{\kappa}\chi_{B_1}\right\|_{\frac{p}{q}}\leq 4\left\|f\right\|^{q}_{s}\mu(B_1)^{\frac{qs-qp+ps}{ps}}.\label{LsinLqp}
\end{equation}
according to Proposition \ref{young}. 
\end{proof}
The above result allows us to say that $L^{s}(\mu)\subset\left(L^{q},L^{p}\right)(\mu)$ provided  $1\leq q\leq s\leq p<\infty$.

We also have, as in the classical case, that the family of spaces $(L^q, L^p) (\mu)$ is decreasing with respect to the  $ q $ power. More precisely we have the following result.
\begin{prop}\label{include1}
Let $1\leq q_1\leq q_2\leq p\leq\infty$. For any locally integrable function $f$, we have that
\begin{equation}\left\|f\right\|_{q_1,p}\leq\mu(B_1)^{\frac{1}{q_1}-\frac{1}{q_2}}\left\|f\right\|_{q_2,p}\label{decrease}
\end{equation}
\end{prop}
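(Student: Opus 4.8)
The plan is to prove, for every $y\in\mathbb R$, the pointwise estimate
\begin{equation*}
\left(\int_{B_1}(\tau_y|f|^{q_1})(x)\,d\mu(x)\right)^{\frac1{q_1}}\leq \mu(B_1)^{\frac1{q_1}-\frac1{q_2}}\left(\int_{B_1}(\tau_y|f|^{q_2})(x)\,d\mu(x)\right)^{\frac1{q_2}},
\end{equation*}
where, as in the identity used in the proof of Proposition \ref{norm1}, the quantity $\int_{B_1}(\tau_y|f|^{q})\,d\mu:=\int_{\mathbb R}|f|^{q}\,\tau_{-y}\chi_{B_1}\,d\mu$ is well defined (with values in $[0,+\infty]$) for any $\mu$-measurable $f$, because $\tau_{-y}\chi_{B_1}\geq 0$ by (\ref{**}). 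Taking the $L^p(\mu)$-norm in the variable $y$ of both sides of the pointwise estimate then yields (\ref{decrease}), by monotonicity of the Lebesgue norm. We may assume $q_1<q_2$, the case $q_1=q_2$ (in particular $q_1=q_2=\infty$) being trivial.

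Fix $y$ and set $d\nu_y(x):=(\tau_{-y}\chi_{B_1})(x)\,d\mu(x)$, a nonnegative measure whose total mass is
\begin{equation*}
\nu_y(\mathbb R)=\int_{\mathbb R}\tau_{-y}\chi_{B_1}\,d\mu=\mu(B_1),
\end{equation*}
because Dunkl translation leaves the measure $\mu$ invariant; concretely this follows from $\mathcal F(\tau_{-y}\chi_{B_1})(0)=\E(0)\mathcal F(\chi_{B_1})(0)=\mathcal F(\chi_{B_1})(0)$ together with $\mathcal F(\chi_{B_1})(0)=\mu(B_1)$. Hence $\pi_y:=\mu(B_1)^{-1}\nu_y$ is a probability measure, and since $q_1\leq q_2$, the monotonicity of $L^r$-norms with respect to a probability measure (Jensen's inequality for the convex map $t\mapsto t^{q_2/q_1}$ when $q_2<\infty$, and the trivial domination by the essential supremum when $q_2=\infty$) gives $\|\,|f|\,\|_{L^{q_1}(\pi_y)}\leq\|\,|f|\,\|_{L^{q_2}(\pi_y)}$, i.e.,
\begin{equation*}
\mu(B_1)^{-\frac1{q_1}}\left(\int_{\mathbb R}|f|^{q_1}\,d\nu_y\right)^{\frac1{q_1}}\leq\mu(B_1)^{-\frac1{q_2}}\left(\int_{\mathbb R}|f|^{q_2}\,d\nu_y\right)^{\frac1{q_2}}.
\end{equation*}
Multiplying by $\mu(B_1)^{1/q_1}$ and recalling that $\int_{\mathbb R}|f|^{q}\,d\nu_y=\int_{B_1}(\tau_y|f|^{q})\,d\mu$ for $q\in\{q_1,q_2\}$ gives the claimed pointwise estimate. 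When $q_2=\infty$ one uses in addition that $\nu_y$ is carried by $B(-y,1)=B(y,1)$, so that $\|\,|f|\,\|_{L^\infty(\nu_y)}\leq\|f\chi_{B(y,1)}\|_\infty$, which reproduces the $q=\infty$ branch in the definition of $\|\cdot\|_{q,p}$.

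The only step that is not completely routine is the mass computation $\nu_y(\mathbb R)=\mu(B_1)$, i.e. the translation-invariance of $\mu$ — a standard fact of Dunkl analysis; the remainder is Jensen's inequality followed by taking an $L^p$-norm. If one prefers to bypass the Dunkl-transform argument, the same inequality follows from Hölder's inequality with exponents $q_2/q_1$ and $q_2/(q_2-q_1)$ applied to the factorization $|f|^{q_1}\tau_{-y}\chi_{B_1}=\bigl(|f|^{q_2}\tau_{-y}\chi_{B_1}\bigr)^{q_1/q_2}\bigl(\tau_{-y}\chi_{B_1}\bigr)^{1-q_1/q_2}$, which again reduces to the estimate $\int_{\mathbb R}\tau_{-y}\chi_{B_1}\,d\mu\leq\mu(B_1)$ before integrating in $y$.
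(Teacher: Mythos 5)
Your proof is correct and follows essentially the same route as the paper: both arguments reduce to H\"older's inequality with exponents $q_2/q_1$ and $q_2/(q_2-q_1)$ (your Jensen/probability-measure formulation is just a repackaging of this, and you even write out the H\"older factorization explicitly at the end), combined with the key mass identity $\int_{\mathbb R}\tau_{-y}\chi_{B_1}\,d\mu=\mu(B_1)$. The only cosmetic difference is that you justify this identity by evaluating the Dunkl transform at the origin, whereas the paper cites \cite[Theorem 6.3.3]{DX}; your handling of the $q_2=\infty$ case likewise matches the paper's.
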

\begin{proof}
Let $f\in L^0(\mu)$. 

We suppose that $q_{1} < q_{2} < + \infty$. H\"older inequality, and \cite[Theorem 6.3.3]{DX} 
allow us to write that 
\begin{eqnarray*}
\left[\int_{\mathbb R}\tau_{y}\left|f\right|^{q_{1}}(x)\chi_{B_1}(x)d\mu(x)\right]^{\frac{1}{q_{1}}}	&\leq&  \left[ \int_{\mathbb R}(\left|f\right|^{q_{2}}(x) \tau_{-y}\chi_{B_1}(x)d\mu(x)\right]^{\frac{1}{q_{2}}} \left[\int_{\mathbb R}\tau_{-y}\chi_{B_1}(x)d\mu \right]^{\frac{1}{q_{1}}-\frac{1}{q_{2}}}\\
	&\leq& \left[ \int_{\mathbb R}\left|f\right|^{q_{2}}(x) \tau_{-y}\chi_{B_1}(x)d\mu(x)\right]^{\frac{1}{q_{2}}}  (\mu (B_1))^{\frac{1}{q_{1}}-\frac{1}{q_{2}} } \\	
	&=& (\mu (B_1))^{\frac{1}{q_{1}}-\frac{1}{q_{2}}} \left[\int_{\mathbb R}\tau_{y}\left|f\right|^{q_{2}}(x)\chi_{B_1}(x)d\mu(x)\right]^{\frac{1}{q_{2}}},	
\end{eqnarray*}	
so that taking the $L^p(\mu)$-norm of both sides leads to the desire result.

Suppose  $q_{1} < q_{2} = + \infty$. We assume that $\left\| f \right\|_{ \infty,p} <\infty$ since otherwise there is nothing to prove. We have
\begin{eqnarray*}
\left[\int_{\mathbb R}\tau_{y}\left|f\right|^{q_{1}}(x)\chi_{B_1}(x)d\mu(x)\right]^{\frac{1}{q_{1}}} &=& \left[\int_{ B(y,1) }\left|f\right|^{q_{1}}(x)\tau_{-y}\chi_{B_1}(x)d\mu_(x)\right]^{\frac{1}{q_{1}}} \\
&\leq& \left\| f \chi_{B(y,1)} \right\|_{ \infty } \left[\int_{ \mathbb R }\tau_{-y}\chi_{B_1}(x)d\mu(x)\right]^{\frac{1}{q_{1}}} \\
&=& (\mu (B_1))^{\frac{1}{q_{1}}} \left\| f \chi_{B(y,1)} \right\|_{ \infty }.
\end{eqnarray*}
We also conclude in this case by taking the $L^p(\mu)$-norm of both sides.
\end{proof}
\section{Fofana spaces associated to Dunkl-amalgam}
Let $1\leq q\leq\alpha\leq p\leq\infty$. The space $\left(L^{q},L^{p}\right)^{\alpha}(\mu)$ is defined by
$$\left(L^{q},L^{p}\right)^{\alpha}(\mu)=\left\{f\in L^{0}(\mu):\left\|f\right\|_{q,p,\alpha}<\infty\right\},$$
 where
\begin{equation*}
\left\|f\right\|_{q,p,\alpha}=\sup_{r>0}(\mu(B_r))^{\frac{1}{\alpha}-\frac{1}{q}-\frac{1}{p}}\ _{r}\left\|f\right\|_{q,p},
\end{equation*}
and for $r>0$
 \begin{equation*}
 \ _{r}\left\|f\right\|_{q,p}=\left\{\begin{array}{lll}\left\|\left[\int_{\mathbb R}(\tau_{y}\left|f\right|^{q})\chi_{B_r}(x)d\mu(x)\right]^{\frac{1}{q}}\right\|_p&\text{ if }&q<\infty\\
 \left\|\left\|f\chi_{B(y,r)}\right\|_{\infty}\right\|_p&\text{ if }&q=\infty.
 \end{array}\right.
 \end{equation*}
 It is easy to see that for 
$q\leq\alpha\leq p$ the space $\left(L^{q},L^{p}\right)^{\alpha}(\mu)$ is a complex vector subspace of $(L^{q},L^{p})(\mu)$.
Notice that $(L^{q},L^{\infty})^{\alpha}(\mu)$ is the Dunkl-type Morrey space as defined in \cite{GM2}.

From the definition of $\left(L^{q},L^{p}\right)^{\alpha}(\mu)$ spaces and the proof of Proposition \ref{norm1}, we deduce that the map $ L^{0}(\mu)\ni f\mapsto\ \left\|f\right\|_{q,p,\alpha}$ define a norm in $\left(L^{q},L^{p}\right)^{\alpha}(\mu)$.

These spaces have been introduced in the classical case; i.e., taking in the above definition the measure of Lebesgue instead of $ \mu $, and the classical translation instead of that of Dunkl,  by I. Fofana in \cite{Fo} where he proved that the space is non trivial if and only if $q\leq \alpha\leq p$ (this is why we are refering to these spaces as Fofana's spaces).  Thus we will always assume that this condition is fulfilled. Some important properties of Fofana's spaces in the classical case  are resumed in the following (see \cite{Fe1}).
\begin{prop}
Let $1\leq q\leq \alpha\leq p\leq \infty$.
\begin{enumerate}
    \item $\left((L^q,L^p)^\alpha(dx),\left\|\cdot\right\|_{(L^q,L^p)^\alpha(dx)}\right)$ is a complex Banach space.
    \item There exists $C>0$ such that
    $$\left\|f\right\|_{(L^q,L^p)(dx)}\leq\left\|f\right\|_{(L^q,L^p)^\alpha(dx)}\leq C\left\|f\right\|_{L^\alpha(dx)}.$$
    \item If $q\leq q_1\leq\alpha\leq p_1\leq p$ then
    $$\left\|f\right\|_{(L^q,L^p)^\alpha(dx)}\leq \left\|f\right\|_{(L^{q_1},L^{p_1})^\alpha(dx)}.$$
    \item If $\alpha\in \left\{q,p\right\}$ then $(L^q,L^p)^\alpha(dx)=L^\alpha(dx)$ with equivalent norms.
\end{enumerate}
\end{prop}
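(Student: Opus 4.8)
The plan is to transpose to the classical setting (Lebesgue measure, ordinary translation) the arguments developed in Sections~3 and~4, whose ingredients --- Young's inequality, H\"older's inequality, the Lebesgue differentiation theorem --- all hold classically with constant $1$; the only genuinely new phenomenon is the monotonicity of the scale $(L^q,L^p)^\alpha$ in the exponent $p$. For (1) I would repeat the argument proving that $(L^q,L^p)(\mu)$ is Banach: if $\sum_n\left\|f_n\right\|_{(L^q,L^p)^\alpha(dx)}<\infty$, then rearranging the defining supremum gives ${}_{r}\left\|f_n\right\|_{q,p}\le|B_r|^{1/q+1/p-1/\alpha}\left\|f_n\right\|_{(L^q,L^p)^\alpha(dx)}$ for every $r$, and the case $r=1$ shows that $\sum_n f_n$ converges in the classical amalgam $(L^q,L^p)(dx)$ to a measurable function $f$; for each $r$, subadditivity of ${}_{r}\left\|\cdot\right\|_{q,p}$ together with monotone convergence yields ${}_{r}\left\|f\right\|_{q,p}\le\sum_n{}_{r}\left\|f_n\right\|_{q,p}$ and ${}_{r}\left\|f-\sum_{i\le n}f_i\right\|_{q,p}\le\sum_{i>n}{}_{r}\left\|f_i\right\|_{q,p}$, and multiplying by $|B_r|^{1/\alpha-1/q-1/p}$ and taking $\sup_r$ gives $f\in(L^q,L^p)^\alpha(dx)$ and convergence of the partial sums. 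For (2), the left-hand inequality is just the case $r=1$ of the defining supremum, while the right-hand one is the radius-$r$ version of Proposition~\ref{inclaqp} with $s=\alpha$: from ${}_{r}\left\|f\right\|_{q,p}^{q}=\left\||f|^q\ast\chi_{B_r}\right\|_{p/q}$ and Young's inequality with exponents $\alpha/q$ and $t$, $1/t=1+q/p-q/\alpha$, one gets $|B_r|^{1/\alpha-1/q-1/p}{}_{r}\left\|f\right\|_{q,p}\le\left\|f\right\|_\alpha$ for every $r$, the hypothesis $q\le\alpha\le p$ being exactly what makes the exponents $\alpha/q\ge1$ and $1\le t\le\infty$ admissible.

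For (3) I would separate a $q$-step and a $p$-step. The $q$-step (passing from a first index $q_1$ to $q\le q_1$, keeping the middle index) is immediate from the radius-$r$ version of Proposition~\ref{include1}, since $|B_r|^{1/\alpha-1/q-1/p}{}_{r}\left\|f\right\|_{q,p}\le|B_r|^{1/\alpha-1/q-1/p}|B_r|^{1/q-1/q_1}{}_{r}\left\|f\right\|_{q_1,p}=|B_r|^{1/\alpha-1/q_1-1/p}{}_{r}\left\|f\right\|_{q_1,p}$, whence $\left\|f\right\|_{(L^q,L^p)^\alpha(dx)}\le\left\|f\right\|_{(L^{q_1},L^p)^\alpha(dx)}$. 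The $p$-step (passing from a middle index $p$ to $p_1\le p$, keeping the first index) is the substantive point. One uses that $\phi_r(y):=\left\|f\chi_{B(y,r)}\right\|_q$ is non-decreasing in $r$ (concentric balls) and that ${}_{r}\left\|f\right\|_{q,p_1}=\left\|\phi_r\right\|_{p_1}$. Writing $A:=\left\|f\right\|_{(L^{q},L^{p_1})^\alpha(dx)}$, Chebyshev's inequality gives $\left|\left\{\phi_s>\lambda\right\}\right|\le A^{p_1}\lambda^{-p_1}|B_s|^{1+p_1/q-p_1/\alpha}$ for all $s,\lambda>0$. To estimate $\left\|\phi_r\right\|_p^{p}=p\int_0^\infty\lambda^{p-1}\left|\left\{\phi_r>\lambda\right\}\right|d\lambda$ one uses, for small $\lambda$, the monotonicity $\phi_r\le\phi_s$ at $s=r$, and for large $\lambda$ an extra decay obtained by covering $B(y,r)$ by $\approx|B_r|/|B_s|$ sets each contained in a ball of radius $\approx s<r$, so that $\phi_r(y)>\lambda$ forces $\phi_{Cs}(y')>\lambda(|B_s|/|B_r|)^{1/q}$ at one of the centres; optimising the splitting level and the auxiliary radius $s$ as functions of $\lambda$ --- this is Fofana's packing argument from \cite{Fo,Fe1} --- yields $\left\|\phi_r\right\|_p\le CA|B_r|^{1/q+1/p-1/\alpha}$ uniformly in $r$, that is $\left\|f\right\|_{(L^{q_1},L^p)^\alpha(dx)}\le C\left\|f\right\|_{(L^{q_1},L^{p_1})^\alpha(dx)}$; combining the two steps gives (3). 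This $p$-monotonicity --- which fails outright for the amalgams $(L^q,L^p)$ themselves --- is the main obstacle, and it is precisely the supremum over all radii in the definition of $(L^q,L^p)^\alpha$ that makes it true.

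For (4), the upper bounds $\left\|f\right\|_{(L^q,L^p)^\alpha(dx)}\le C\left\|f\right\|_{L^\alpha(dx)}$ when $\alpha\in\{q,p\}$ are the cases $s=\alpha\in\{q,p\}$ of the estimate used in (2). For the reverse inequalities: when $\alpha=q$, for $y$ in a ball of radius $r/2$ the ball $B(y,r)$ contains a fixed ball of radius $r/2$, so $\phi_r(y)\ge\left\|f\chi_{B_{r/2}}\right\|_q$ there, whence $|B_r|^{-1/p}{}_{r}\left\|f\right\|_{q,p}\ge C^{-1}\left\|f\chi_{B_{r/2}}\right\|_q\to C^{-1}\left\|f\right\|_q$ as $r\to\infty$ by monotone convergence; when $\alpha=p$, one writes $|B_r|^{-1/q}{}_{r}\left\|f\right\|_{q,p}=\left\||B_r|^{-1}|f|^q\ast\chi_{B_r}\right\|_{p/q}^{1/q}$ and uses that $|B_r|^{-1}|f|^q\ast\chi_{B_r}\to|f|^q$ almost everywhere as $r\to0$ (the classical analogue of \eqref{translate2}) together with Fatou's lemma to get $\liminf_{r\to0}|B_r|^{-1/q}{}_{r}\left\|f\right\|_{q,p}\ge\left\|f\right\|_p$. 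In both cases this gives $\left\|f\right\|_{L^\alpha(dx)}\le C\left\|f\right\|_{(L^q,L^p)^\alpha(dx)}$, so $(L^q,L^p)^\alpha(dx)=L^\alpha(dx)$ with equivalent norms. Throughout, the endpoint cases $q=\infty$ or $p=\infty$ are handled by the usual modifications using the $L^\infty$-formulation of ${}_{r}\left\|\cdot\right\|_{q,p}$.
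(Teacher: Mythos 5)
First, a point of comparison: the paper gives no proof of this proposition at all --- it is quoted as a list of known classical facts with a pointer to \cite{Fe1} (the statements originate in \cite{Fo}) --- so your write-up is being measured against the literature rather than against an argument in the text. Items (1), (2), (4) and the $q$-step of (3) are correct and follow the standard route: (1) is the series-completeness criterion transported through the weight $|B_r|^{1/\alpha-1/q-1/p}$, (2) is evaluation at $r=1$ plus Young's inequality exactly as in Proposition \ref{inclaqp}, the $q$-step of (3) is H\"older's inequality as in Proposition \ref{include1}, and both halves of (4) (for $\alpha=q$ the inclusion $B_{r/2}\subset B(y,r)$ for $y\in B_{r/2}$ and $r\to\infty$; for $\alpha=p$ Lebesgue differentiation, Fatou and $r\to 0$) are sound. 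A minor quibble on (2): at $r=1$ the weight is $|B_1|^{1/\alpha-1/q-1/p}$, not $1$, so the left-hand inequality holds with a constant depending on the normalization of $|B_1|$ unless the definition uses normalized means.

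The one place where your argument is not yet a proof is the $p$-step of (3), which you rightly identify as the substantive point. The sentence about ``optimising the splitting level and the auxiliary radius $s$ as functions of $\lambda$'' is doing all the work and is not carried out; moreover, the covering statement as phrased (``$\phi_r(y)>\lambda$ forces $\phi_{Cs}(y')>\lambda(|B_s|/|B_r|)^{1/q}$ at one of the centres'') does not by itself convert into the measure estimate for $\left\{\phi_r>\lambda\right\}$ that your layer-cake integral needs, because the centre $y'$ depends on $y$ and many $y$ can share one centre. The standard, much shorter route is: (i) interpolate $\left\|\phi_r\right\|_p\leq\left\|\phi_r\right\|_{p_1}^{p_1/p}\left\|\phi_r\right\|_\infty^{1-p_1/p}$ and check that the weights combine correctly, which they do since $(\frac{1}{\alpha}-\frac{1}{q}-\frac{1}{p_1})\frac{p_1}{p}+(\frac{1}{\alpha}-\frac{1}{q})(1-\frac{p_1}{p})=\frac{1}{\alpha}-\frac{1}{q}-\frac{1}{p}$; and (ii) control the Morrey-type quantity $\sup_{r>0}|B_r|^{1/\alpha-1/q}\left\|\phi_r\right\|_\infty$ by $C\left\|f\right\|_{(L^{q},L^{p_1})^\alpha(dx)}$, which follows from the single observation that $B(y_0,r)\subset B(y,2r)$ for every $y\in B(y_0,r)$, hence $\phi_{2r}\geq\left\|f\chi_{B(y_0,r)}\right\|_{q}$ on all of $B(y_0,r)$ and therefore $\left\|f\chi_{B(y_0,r)}\right\|_{q}\leq|B_r|^{-1/p_1}\,{}_{2r}\left\|f\right\|_{q,p_1}\leq C|B_r|^{1/q-1/\alpha}\left\|f\right\|_{(L^{q},L^{p_1})^\alpha(dx)}$. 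Substituting (i) and (ii) for your packing argument closes the gap, and the rest of your proof goes through.
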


\medskip

In the case of harmonic analysis associated to Dunkl operators, it follows from Proposition \ref{inclaqp} that for $1\leq q\leq\alpha\leq p\leq\infty$, 

\begin{equation}
\left\|f\right\|_{q,p,\alpha}\leq 4^{\frac{1}{q}}\left\|f\right\|_{\alpha},\label{lebesguedansamalg}
\end{equation}
since Relation (\ref{LsinLqp}) is valid for all balls $B_r$ and $\mu(B_r)>0$ for all $r>0$. This proves that $L^\alpha(\mu)$ is continuously embedded in $(L^q,L^p)^\alpha(\mu)$. We also deduce from Proposition \ref{include1} that for $1\leq q_1\leq q_2\leq\alpha\leq p\leq\infty$, we have 
$$\left\|f\right\|_{q_1,p,\alpha}\leq\left\|f\right\|_{q_2,p,\alpha}.$$

\section{Norm inequalities for Hardy-Littlewood type operator}
Let $f\in L^0(\mu)$. The Dunkl-type Hardy-Littlewood maximal function $Mf$ is defined  by
$$M f(x)=\sup_{r>0}(\mu(B(0,r))^{-1}\int_{B(0,r)}\tau_x\left|f\right|(y)d\mu(y) \  x\in\mathbb R.$$
 It is proved in \cite{TX} that $M$ is bounded on $L^p(\mu)$ for $1<p\leq \infty$ and it is of weak type $(1,1)$.
	
We have the following norms inequalities for the Dunkl-type Hardy-Littlewood  maximal function, in the setting of Dunkl-Fofana spaces.

\begin{thm}\label{maxi}
Let $1< q\leq \alpha\leq p\leq \infty$. 
There exists a constant $C>0$ such that
$$\left\|Mf\right\|_{q,p,\alpha}\leq C\left\|f\right\|_{q,p,\alpha}, \ \ f\in L_{loc}^{q}(\mu).$$
\end{thm}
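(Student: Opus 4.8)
The plan is to reduce the Fofana-space estimate to the local amalgam estimate uniformly in the scale $r$, and then to prove a \emph{localized} maximal inequality on Dunkl balls by the standard splitting into a ``local'' part and a ``tail'' part, using the known $L^q(\mu)$-boundedness of $M$ from \cite{TX} for the local part and the decay estimate \eqref{**} for the tail. Concretely, I would first unwind the definition: for fixed $r>0$,
$$
{}_{r}\left\|Mf\right\|_{q,p}=\left\|\left[\int_{\mathbb R}\tau_{y}\left(Mf\right)^{q}(x)\,\chi_{B_r}(x)\,d\mu(x)\right]^{1/q}\right\|_{p},
$$
so it suffices to bound, for each $y$, the quantity $\left\|(Mf)\chi_{B(y,r)}\right\|$ weighted by $\tau_{-y}\chi_{B_r}$, by the analogous expression for $f$ on a dilated ball, say $B(y,2r)$ (the doubling of balls $B(x,r)$ plays the role of the classical cube enlargement). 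The main tool here is that for $x\in B(y,r)$ one has $\tau_x\chi_{B_r}$ supported near $B(y,r)$; combined with the commutation $\tau_x(f\ast_\kappa g)=\tau_xf\ast_\kappa g$ and the fact that $M$ is defined via $\tau_x$, one controls $M$ acting on the full function by $M$ acting on the restriction to $B(y,2r)$ plus a remainder.

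For the remainder (tail) term I would split $f=f\mathbf 1_{B(y,2r)}+f\mathbf 1_{B(y,2r)^c}$ and estimate, for $x\in B(y,r)$,
$$
M\!\left(f\mathbf 1_{B(y,2r)^c}\right)(x)\lsim \sum_{k\ge 1}\frac{1}{\mu(B(0,2^{k}r))}\int_{B(0,2^{k}r)}\tau_x\left(\left|f\right|\mathbf 1_{B(y,2r)^c}\right)(z)\,d\mu(z),
$$
and then use \eqref{**} — i.e. the pointwise bound $\tau_x\chi_{B_\rho}\le \min\{1,C(\rho/|x|)^{2\kappa+1}\}$, together with $\mu(B_\rho)=\mathfrak b_\kappa\rho^{2\kappa+2}$ — to show that this tail is dominated by $C\,{}_{2^{k}r}\|f\|_{q,p}$-type averages with geometrically decaying coefficients, so that after multiplying by $(\mu(B_r))^{1/\alpha-1/q-1/p}$ and taking the supremum over $r$, the series sums because $q\le\alpha\le p$ makes the relevant exponent of $2^{k}$ negative. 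This is essentially the Dunkl analogue of the Fofana/Hörmander argument: the key inequalities $\frac1\alpha-\frac1q-\frac1p$ and the nesting $q\le\alpha\le p$ guarantee summability of the tail.

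For the local part, I would apply the $L^q(\mu)$-boundedness of $M$ (valid since $q>1$) to $f\mathbf 1_{B(y,2r)}$: writing $w_y(x)=\tau_{-y}\chi_{B_r}(x)$ and noting $0\le w_y\le\mathbf 1$, one gets
$$
\int_{\mathbb R}\left(M(f\mathbf 1_{B(y,2r)})\right)^{q}(x)\,w_y(x)\,d\mu(x)\le \left\|M(f\mathbf 1_{B(y,2r)})\right\|_{q}^{q}\lsim \int_{B(y,2r)}\left|f\right|^{q}(x)\,d\mu(x),
$$
and then I would re-express $\int_{B(y,2r)}|f|^q\,d\mu$ through $\tau$-translates so as to recover ${}_{2r}\|f\|_{q,p}$ after taking the $L^p(\mu)$-norm in $y$; here one uses that $\int_{B(y,2r)}|f|^q\,d\mu\approx\int_{\mathbb R}\tau_y(|f|^q)(x)\chi_{B_{2r}}(x)\,d\mu(x)$ up to the Dunkl-convolution identities recorded before Proposition \ref{inclaqp}, and the scaling $\mu(B_{2r})=2^{2\kappa+2}\mu(B_r)$. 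Finally, handling $q=\infty$ or $p=\infty$ (and $\alpha\in\{q,p\}$) separately is routine: for $q=\infty$ the local part is trivial from $M:L^\infty\to L^\infty$, and for $p=\infty$ the outer norm is a supremum that passes through the estimates directly.

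The step I expect to be the main obstacle is the \textbf{localization of $M$ on Dunkl balls}: unlike in $\mathbb R^n$ with cubes, the sets $B(x,r)$ behave asymmetrically near the origin and $\tau_x$ is not a simple shift, so showing that $M(f)(x)$ for $x\in B(y,r)$ is controlled by averages of $f$ over $B(y,2r)$ plus a clean geometric tail requires care with the support property of $\tau_x\chi_{B_\rho}$ (supported in $B(x,\rho)$) and with comparing $B(0,\rho)$-averages of $\tau_x|f|$ to $B(x,\rho)$-localized quantities. Once that geometric bookkeeping is done, the summation of the tail and the invocation of the $L^q(\mu)$ bound are routine.
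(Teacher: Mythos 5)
Your overall strategy (fix the scale $r$, split into a local and a tail part, use the $L^{q}(\mu)$-boundedness of $M$ for the local part and geometric decay for the tail) is the natural one, but the step you yourself flag as the main obstacle is where the argument actually breaks, and the way you propose to resolve it is incorrect. The problem is the claimed equivalence $\int_{B(y,2r)}|f|^{q}\,d\mu\approx\int_{\mathbb R}\tau_{y}(|f|^{q})(x)\chi_{B_{2r}}(x)\,d\mu(x)$. Only one inequality holds, and it is the wrong one for your purposes: since $\int_{\mathbb R}\tau_{y}(|f|^{q})\chi_{B_{2r}}\,d\mu=\int_{B(y,2r)}|f|^{q}\,\tau_{-y}\chi_{B_{2r}}\,d\mu$ and, by \eqref{**}, $0\leq\tau_{-y}\chi_{B_{2r}}\leq\min\left\{1,C\left(2r/|y|\right)^{2\kappa+1}\right\}$, the translate-weighted integral can be smaller than $\int_{B(y,2r)}|f|^{q}\,d\mu$ by the unbounded factor $(|y|/r)^{2\kappa+1}$ (note that $\mu(B(y,2r))\approx r|y|^{2\kappa+1}$ for $|y|\gg r$ while $\int\tau_{-y}\chi_{B_{2r}}\,d\mu=\mu(B_{2r})\approx r^{2\kappa+2}$, so the weight is of order $(r/|y|)^{2\kappa+1}$ on most of $B(y,2r)$). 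Consequently your local-part bound $\left\|M(f\mathbf 1_{B(y,2r)})\right\|_{q}^{q}\lsim\int_{B(y,2r)}|f|^{q}\,d\mu$ controls $Mf$ by an amalgam-type quantity built from plain restrictions $f\chi_{B(y,2r)}$, which is not dominated by the Dunkl quantity ${}_{2r}\|f\|_{q,p}$; the same defect infects the tail, where you need to convert integrals of $|f|$ over $B(y,2^{k}r)$ into ${}_{2^{k}r}\|f\|_{q,p}$-type averages.

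The paper circumvents exactly this obstruction by never localizing $f$ itself. It replaces $M$ by the equivalent maximal functions $\widetilde M$ and $M_{\mu}$ over Euclidean intervals (Lemma \ref{lem3}), covers the Dunkl ball by two intervals via $\chi_{B(y,r)}\leq\chi_{I(-y,3r)}+\chi_{I(y,3r)}$ (Lemma \ref{lem 1}), and then invokes the Fefferman--Stein/P\'erez--Wheeden type weighted inequality $\int_{I}(\widetilde Mf)^{q}\,d\mu\lsim\int_{\mathbb R}|f|^{q}\,\widetilde M\chi_{I}\,d\mu$ from \cite{De}. The commutation of $\tau_{y}$ with the maximal function together with the identity $M_{\mu}(\tau_{x}\chi_{I(0,\rho)})=M_{\mu}(\chi_{I(x,\rho)})$ (Lemma \ref{lem 6}) then transfers the translation from the weight back onto $|f|^{q}$, producing integrals of $\tau_{\pm y}(|f|^{q})$ against $M_{\mu}\chi_{I(0,3r)}$ over origin-centered balls --- precisely the quantities the norm ${}_{\rho}\|\cdot\|_{q,p}$ measures --- after which the dyadic splitting, Lemma \ref{maxcha} and the reverse doubling estimate \eqref{revd} sum the tail essentially as you envisaged. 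If you want to keep your decomposition, you would need a substitute weighted inequality of the form $\int(Mf)^{q}\,\tau_{-y}\chi_{B_{r}}\,d\mu\lsim\int\tau_{\pm y}(|f|^{q})\,w\,d\mu$ with $w$ an origin-centered profile; that is in substance what the P\'erez--Wheeden step accomplishes and what your sketch is missing.
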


\medskip

Let $1\leq q\leq\alpha\leq \infty$. For a locally $\mu$-measurable function  $f$, we put
$$ \Vert f \Vert_{(L^{1,+\infty}, L^{p})^{\alpha} (\mu)} = \sup\limits_{r > 0} (\mu (B(0,r)))^{\frac{1}{\alpha} - 1 - \frac{1}{p}} \Vert \Vert f \tau_{-y}\chi_{B_r} \Vert_{L^{1,+\infty}(\mu)   }   \Vert_{p},$$
where   $L^{1,+\infty}(\mu)$ denotes the weighted weak-Lebesgue space,  consists of $f\in L^0(\mu)$ 
satisfying the condition 	
	$$ \Vert f \Vert_{L^{1,+\infty} (\mu)} = \sup\limits_{\lambda >0}	\lambda \mu (  \left\{
	\begin{array}{ll}
	x \in \mathbb{R} : \vert f(x) \vert > \lambda 
	\end{array}
	\right\}  )<\infty.$$

Whe have the following weak-type inequality, when  $q=1$. 
\begin{thm}\label{weakmaxi}
Let $1\leq \alpha\leq p\leq \infty$. There exists  $C>0$ such that 
$$ \Vert M f \Vert_{(L^{1,+\infty}, L^{p})^{\alpha} (\mu)} \leq C \Vert f \Vert_{1,p,\alpha}, \ \ f\in L_{loc}^{1}(\mu).$$
\end{thm}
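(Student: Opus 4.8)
The plan is to carry out, in the Dunkl setting, the Calder\'on--Zygmund type decomposition that underlies the analogous estimate on classical Fofana spaces, the r\^ole of translation invariance and of the size of Euclidean balls being played by the support properties of $\tau_x\chi_{B_\rho}$, by the pointwise bound \eqref{**}, and by the weak type $(1,1)$ inequality for $M$ recalled from \cite{TX}. Fix $r>0$ and $y\in\mathbb R$. Since $x\mapsto\tau_{-y}\chi_{B_r}(x)$ is supported in $B(y,r)$ and, by \eqref{**}, is dominated there by $C\,w_r(y)$ with $w_r(y):=\min\{1,(r/|y|)^{2\kappa+1}\}$, it is enough to estimate $\left\|(Mf)\tau_{-y}\chi_{B_r}\right\|_{L^{1,\infty}(\mu)}$, which we do after splitting $f=f\chi_{B(y,3r)}+f\chi_{B(y,3r)^c}=:f^0_y+f^\infty_y$ and using the quasi-triangle inequality of $L^{1,\infty}(\mu)$.

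For the local term, $\tau_{-y}\chi_{B_r}\le C\,w_r(y)\chi_{B(y,r)}$ and the weak type $(1,1)$ of $M$ give $\left\|(Mf^0_y)\tau_{-y}\chi_{B_r}\right\|_{L^{1,\infty}(\mu)}\lsim w_r(y)\int_{B(y,3r)}|f|\,d\mu$. For the tail term, the support property of $\tau_x\chi_{B_\rho}$ shows that, for $x\in B(y,r)$, only scales $\rho>2r$ contribute to $Mf^\infty_y(x)$; splitting those into $\rho\le|x|$ and $\rho>|x|$, and using \eqref{**}, the inclusions between the sets $B(x,\rho)$, $B(y,\rho)$, $B_{2\rho}$, the identity $\mu(B_{2\rho})=2^{2\kappa+2}\mu(B_\rho)$, and the fact that $\tau_x g(z)$ depends only on $g$ on $\{u:|u|\le|x|+|z|\}$ (with $\tau_x1=1$), one bounds $Mf^\infty_y(x)$, uniformly in $x\in B(y,r)$, by $C\,T_r(y)$ with $T_r(y):=\sup_{\rho>cr}(\mu(B_\rho))^{-1}(|f|\ast_\kappa\chi_{B_\rho})(y)$; then \eqref{conttranslation} gives $\left\|\tau_{-y}\chi_{B_r}\right\|_{1}\le4\mu(B_r)$, hence $\left\|(Mf^\infty_y)\tau_{-y}\chi_{B_r}\right\|_{L^{1,\infty}(\mu)}\lsim T_r(y)\mu(B_r)$.

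It then remains to take the $L^p(\mu)$ norm in $y$, multiply by $(\mu(B_r))^{\frac1\alpha-1-\frac1p}$ and pass to the supremum over $r>0$. For the tail, discretizing $\rho=c2^jr$ $(j\ge0)$, dominating $\sup_j$ by $\sum_j$, and using the identity ${}_{\rho}\left\|g\right\|_{1,p}=\left\|\,|g|\ast_\kappa\chi_{B_\rho}\,\right\|_p$ together with the definition of $\left\|\cdot\right\|_{1,p,\alpha}$ and $\mu(B_\rho)=\mathfrak{b}_\kappa\rho^{2\kappa+2}$, one obtains $\left\|T_r\right\|_p\lsim\|f\|_{1,p,\alpha}\,(\mu(B_r))^{\frac1p-\frac1\alpha}\sum_{j\ge0}2^{(2\kappa+2)(\frac1p-\frac1\alpha)j}$, and the geometric series converges precisely because $\alpha<p$, so that $(\mu(B_r))^{\frac1\alpha-\frac1p}\left\|T_r\right\|_p\lsim\|f\|_{1,p,\alpha}$ uniformly in $r$; the residual case $\alpha=p$ follows at once, when $p>1$, from the $L^p(\mu)$-boundedness of $M$ and $\left\|\cdot\right\|_{L^{1,\infty}(\mu)}\le\left\|\cdot\right\|_{L^1(\mu)}$, since there $\left\|\cdot\right\|_{1,p,p}\approx\left\|\cdot\right\|_p$. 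For the local term one is reduced to
$$\left\|\,y\mapsto w_r(y)\int_{B(y,3r)}|f|\,d\mu\,\right\|_p \lsim \left\|\,|f|\ast_\kappa\chi_{B_{Cr}}\,\right\|_p \lsim (\mu(B_{Cr}))^{1+\frac1p-\frac1\alpha}\|f\|_{1,p,\alpha},$$
whose first step I expect to be the main obstacle: the paper records only the upper estimate \eqref{**}, whereas passing from the localized average to the Dunkl convolution requires a matching \emph{lower} bound $\tau_{-y}\chi_{B_{Cr}}(z)\gsim w_r(y)$ for $z\in B(y,3r)$ — equivalently a two-sided control of $\tau_x\chi_{B_\rho}$ on a fixed fraction of $B(x,\rho)$ — which is also exactly the ingredient needed to complete the tail reduction to $T_r(y)$. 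Granting such a bound (it can be read off the explicit product formula for $\tau_x$ on $\mathbb R$), the pointwise inequality $w_r(y)\int_{B(y,3r)}|f|\,d\mu\lsim(|f|\ast_\kappa\chi_{B_{Cr}})(y)$ holds by linearity in $f\ge0$; combining this with the tail bound and using $\mu(B_{Cr})\approx\mu(B_r)$ then gives $(\mu(B_r))^{\frac1\alpha-1-\frac1p}\bigl\|\,\left\|(Mf)\tau_{-y}\chi_{B_r}\right\|_{L^{1,\infty}(\mu)}\,\bigr\|_p\lsim\|f\|_{1,p,\alpha}$ for all $r>0$, which is the assertion.
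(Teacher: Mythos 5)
Your overall strategy is genuinely different from the one used in the paper, and, as you yourself flag, it is not complete: everything hinges on a two-sided estimate for $\tau_{x}\chi_{B_{r}}$, namely a lower bound $\tau_{-y}\chi_{B_{Cr}}(z)\gsim\min\{1,(r/\left|y\right|)^{2\kappa+1}\}$ for $z\in B(y,3r)$, which you invoke twice (to convert the localized average $w_r(y)\int_{B(y,3r)}\left|f\right|\,d\mu$ into $(\left|f\right|\ast_{\kappa}\chi_{B_{Cr}})(y)$, and to reduce the tail of $Mf^{\infty}_{y}$ on $B(y,r)$ to the truncated convolution maximal function $T_r(y)$) but never prove. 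The paper records only the upper bound \eqref{**}; the averaged inequality $\int_{I(y,\rho)}\left|f\right|\,d\mu\lsim\int_{B(0,\rho)}\tau_{y}\left|f\right|\,d\mu$ quoted from \cite{MH} in the proof of Lemma \ref{lem3} gives a lower bound of order $1$ on the Euclidean interval $I(y,\rho)$ only, not the bound of order $(r/\left|y\right|)^{2\kappa+1}$ on the reflected portion of the Dunkl ball $B(y,3r)$ that your reduction requires. Such a two-sided estimate is in fact true (it can be extracted from the explicit product formula for $\tau_x$ on the real line), but it is a nontrivial computation that appears nowhere in the paper, so as written your argument has a genuine gap at its central step.

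The paper's proof avoids this issue entirely by a different reduction: using Lemma \ref{lem3} it replaces $M$ by the maximal operator $M_\mu$ built on Euclidean intervals of the homogeneous space $(\mathbb R,\left|\cdot\right|,\mu)$, covers the Dunkl ball by two ordinary intervals via Lemma \ref{lem 1} ($\chi_{B(y,r)}\leq\chi_{I(-y,3r)}+\chi_{I(y,3r)}$), and then applies the weak-type Fefferman--Stein inequality $\mu(\left\{x\in I:\widetilde{M}f(x)>\lambda\right\})\lsim\lambda^{-1}\int_{\mathbb R}\left|f\right|\widetilde{M}\chi_{I}\,d\mu$ from \cite{De}, followed by Lemma \ref{lem 6}, the commutation of $\tau_y$ with $M$, and the dyadic decay of $M_\mu\chi_{I(0,3r)}$ furnished by Lemma \ref{maxcha}; only \emph{upper} bounds on maximal functions of indicators are ever used. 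To salvage your route you must either prove the missing lower bound for $\tau_x\chi_{B_r}$ or, as the paper does, reroute the argument through Euclidean intervals where no such bound is needed. (Your dispatch of the case $\alpha=p$, $p>1$, via the $L^{p}(\mu)$-boundedness of $M$ is fine, and your geometric-series summation for $\alpha<p$ is essentially the same as the paper's, so the quantitative skeleton is sound once the translation estimate is supplied.)
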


\medskip

Notice that $(\mathbb R,\left|\cdot\right|,\mu)$ is a space of homogeneous type in the sense of Coifman and Stein.

Let $X$ be a set. A map  $d : X \times X\rightarrow\left[0,\infty\right)$ is called a quasi-distance on
$X$ if the following conditions are satisfied:
\begin{enumerate}
\item for every $x$ and $y$ in $X$, $d(x, y) = 0$ if and only if $x = y$,
\item for every $x$ and $y$ in $X$, $d(x, y) = d(y, x)$,
\item there exists a constant $\mathfrak c$ such that $d(x, y) \leq \mathfrak {c}(d(x, z)+ d(z, y))$ for every
$x,\  y$ and $z$ in $X$.\label{constdoub}
\end{enumerate}

Let $x\in X$ and $r>0$. The ball of center $x$ and radius $r$ is the subset $I(x, r) = \left\{y : d(x, y) < r\right\}$ of $X$. Fix $x\in X$. The family $\left\lbrace I(x,r)\right\rbrace _{r>0}$  form a basis of neighborhoods of $x\in X$ for the topology induced by $d$. 

Let $X$ be a set endowed with a quasi-distance $d$ and  a non-negative measure  $\mu$ defined on a $\sigma$-algebra of subsets of $X$ which contains the $d$-open subsets and the balls $I(x,r)$. Assume that there exists a finite constant $C$, such that 
\begin{equation}
 0 < \mu(I(x,2r))\leq C\mu(I(x, r)) <\infty\ x\in X\text{ and } r>0, \label{0.2}
\end{equation}
holds for every $x\in X$ and $r>0$. A set $X$ with a quasi-distance $d$ and a measure $\mu$ satisfying the above conditions, will be called a space of homogeneous type and denoted by $(X,d,\mu)$.  
If $C'_{\mu}$ is the smallest constant $C$ for which (\ref{0.2}) 
holds, then $D_{\mu}=\log _{2}C'_{\mu}$ is called the
doubling order of $\mu$. It is known (see \cite{sw}) that  for all balls $I_{2}\subset I_{1}$ 
\begin{equation}
\frac{\mu \left( I_{1}\right) }{\mu \left( I_{2}\right) }\leq C_{\mu}\left( \frac{r\left(
I_{1}\right) }{r\left( I_{2}\right) }\right) ^{D_{\mu}},
\label{0.05}
\end{equation}
where $r(I)$ denotes the radius of the ball $I$, $C_{\mu}=C'_{\mu}(2\mathfrak {c})^{D_{\mu}}$ and $\mathfrak c \geq 1$ the constant in (\ref{constdoub}).

\medskip

In the sequel we assume that $X=\left( X,d,\mu \right) $ is a fixed space of homogeneous type and:
\begin{itemize}
\item $(X,d)$ is separable, 
\item $\mu(X)=\infty,$
\item $I(x,R)\setminus I(x,r)\neq \emptyset,\ 0<r<R<\infty$ and $x\in X$.
\end{itemize}
As proved in \cite{W}, the last assumption implies that there exist two constants $\tilde{C}_{\mu}>0$ and $\delta_{\mu}>0$ such that for all balls $I_{2}\subset I_{1}$ of $X$
 \begin{equation}
 \frac{\mu(I_{1})}{\mu(I_{2})}\geq \tilde{C}_{\mu}\left(\frac{r(I_{1})}{r(I_{2})}\right)^{\delta_{\mu}}.\label{revd}
 \end{equation}
 For $ y \in \mathbb R $ and $ r> 0 $, we have the following relation between $ \mu(B (y, r)) $ and $ \mu(I (y, r)) $, when $ \mu $ is the weighted Lebesgue measure defined in Relation (\ref{*}).

\begin{lem}\label{lem 8}
	Let  $x \in \mathbb{R}$. For all $r>0$ we have  $$  \mu (B(x,r)) \leq 2 \mu (I(x,r)).$$
\end{lem}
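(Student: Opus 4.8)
The plan is to compare the Dunkl ball $B(x,r)$ with the Euclidean interval $I(x,r)$ directly, splitting into the two cases $x = 0$ and $x \neq 0$. Recall that $B(0,r) = \,]-r,r[\, = I(0,r)$, so the inequality is trivial (indeed an equality) when $x=0$. So the substance is the case $x \neq 0$, where $B(x,r) = \{y : \max\{0, |x|-r\} < |y| < |x|+r\}$, i.e. an annular region in $|y|$, while $I(x,r) = \,]x-r, x+r[\,$ is an ordinary interval around $x$.

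For $x \neq 0$, say $x > 0$ (the case $x < 0$ being symmetric, since both $B(x,r)$ and the measure $\mu$ are invariant under $y \mapsto -y$ together with $x \mapsto -x$), I would write $B(x,r) = B^+ \cup B^-$ where $B^+ = B(x,r) \cap \,]0,\infty[\, = \,]\max\{0,x-r\},\, x+r[\,$ and $B^- = B(x,r)\cap\,]-\infty,0[\, = \,]-(x+r),\, -\max\{0,x-r\}[\,$. The key observation is that $B^+ \subset I(x,r) = \,]x-r,x+r[\,$ when $x - r \geq 0$ (they coincide), and $B^+ \subset \,]0, x+r[\, \subset I(x,r)$ is false in general but $B^+ = \,]0,x+r[\,$ when $x-r < 0$, which still need not sit inside $I(x,r)$; however, since $d\mu(y) = c_\kappa |y|^{2\kappa+1}\,dy$ with $|y|^{2\kappa+1}$ an even function, the measure $\mu(B^-)$ of the reflected part equals $\mu(B^+)$. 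Hence $\mu(B(x,r)) = \mu(B^+) + \mu(B^-) = 2\mu(B^+)$. It then remains to check that $\mu(B^+) \leq \mu(I(x,r))$.

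To see $\mu(B^+) \le \mu(I(x,r))$: if $x \ge r$ then $B^+ = \,]x-r,x+r[\, = I(x,r)$ and we are done with equality. If $x < r$ then $B^+ = \,]0, x+r[\,$ and $I(x,r) = \,]x-r, x+r[\, \supset \,]0,x+r[\, = B^+$ because $x - r < 0$; hence $\mu(B^+) \le \mu(I(x,r))$ again. In both subcases $\mu(B(x,r)) = 2\mu(B^+) \le 2\mu(I(x,r))$, which is the claim. Combining with the trivial case $x = 0$ and the reflection symmetry for $x < 0$ completes the proof.

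The only mildly delicate point is bookkeeping the interval endpoints correctly in the subcase $x < r$, and confirming that the reflected piece $B^-$ really has the same $\mu$-measure as $B^+$; this is immediate from the evenness of the density $|y|^{2\kappa+1}$, so there is no serious obstacle. I would present the argument compactly, perhaps just as: $\mu(B(x,r)) = 2\mu(B(x,r)\cap[0,\infty[) \le 2\mu(I(x,r))$, with a one-line justification of each step.
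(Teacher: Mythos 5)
Your proof is correct, and it takes a slightly different (and cleaner) route than the paper's. Both arguments rest on the same skeleton: reduce to $x>0$ by the evenness of the density $|y|^{2\kappa+1}$, note that $B(x,r)$ is symmetric about the origin so that $\mu(B(x,r))=2\mu(B^{+})$ with $B^{+}=B(x,r)\cap\,]0,\infty[$, and then split according to whether $|x|\geq r$ or $|x|<r$. Where you differ is in the second case: the paper evaluates both $\mu(B(x,r))=\tfrac{c_{\kappa}}{\kappa+1}(x+r)^{2\kappa+2}$ and $\mu(I(x,r))=\tfrac{c_{\kappa}}{2\kappa+2}\left[(r-x)^{2\kappa+2}+(x+r)^{2\kappa+2}\right]$ explicitly and compares the resulting expressions, whereas you simply observe the set containment $B^{+}=\,]0,x+r[\,\subset\,]x-r,x+r[\,=I(x,r)$ (valid because $x-r<0$) and conclude $\mu(B(x,r))=2\mu(B^{+})\leq 2\mu(I(x,r))$ by monotonicity. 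Your version avoids computing any antiderivatives and makes it transparent why the constant $2$ appears (it is exactly the reflection of $B^{+}$ to $B^{-}$), at the cost of not exhibiting the explicit formulas that the paper reuses later in Lemma \ref{lem 5}. One presentational remark: the sentence in your second paragraph claiming that $]0,x+r[$ ``need not sit inside $I(x,r)$'' contradicts what you correctly prove two lines later; the containment does hold precisely in the subcase $x<r$ where you invoke it, so you should delete that false start and keep only the clean argument of your third paragraph.
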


\begin{proof}
	Let $x \in \mathbb{R}$ and  $r>0$. We have 
	\begin{equation*}
	B(x,r) =\left\{\begin{array}{lll} ] - \vert x \vert -r , 0 [ \cup ] 0,  \vert x \vert +r [ &\text{ if }&  \vert x \vert \leq r \\
	] - \vert x \vert -r , - \vert x \vert + r  [ \cup ] \vert x \vert - r ,  \vert x \vert +r [ &\text{ if }& \vert x \vert > r
	\end{array}\right.
	\end{equation*}
	
	\begin{enumerate}
		\item If $x \ge r$ then
		\begin{eqnarray*}
			\mu(B(x,r)) 
			&=& c_{\k}(\k+1)^{-1}[(x+r)^{2 \k+2} - ( x  - r )^{2\k+2}  ]
		\end{eqnarray*}
		so that 
		 $  \mu(B(x,r)) = 2 \mu (I(x,r)).$
		\item We suppose now that $x \leq r$. 
		\begin{enumerate}
			\item For $0 <x < r$,  we have  $$\mu (B(x,r)) 
			=c_{\k} ( \k+1)^{-1} (  x  + r )^{2\k+2} .$$
			and
			\begin{equation*}
				\mu (I(x,r)) 
				= \frac{c_{\k}}{2\k+2} [ ( r-x )^{2\k+2} + (x + r )^{2\k+2}  ],
			\end{equation*}
			which leads to $  \mu (B(x,r)) < 2 \mu (I(x,r)).$
			\item Suppose $-r < x <0$. We have  $\mu (B(x,r)) = \mu (B(-x,r))$ and $$\mu (I(x,r)) =  \frac{c_{\k}}{2\k+2} [ ( r-x )^{2\k+2} + (x + r )^{2\k+2}  ]= \mu (I(-x,r)),$$
			so that  $  \mu (B(-x,r)) < 2 \mu (I(-x,r)).$ 
			\item Suppose $x<-r$. We have
			$$\mu(B(x,r)) =  c_{\k} ( \k+1)^{-1} [ ( \vert x \vert + r )^{2\k+2} - (\vert x \vert - r )^{2\k+2}  ] = \mu (B(-x,r)) $$ 
%
			and $$\mu (I(x,r)) =  \frac{c_{\k}}{2\k+2} [ ( r-x )^{2\k+2} + (x + r )^{2\k+2}  ].$$
			Thus $2\mu (I(x,r)) > 2 \mu(I(-x,r)) = \mu (B(-x,r)) = \mu (B(x,r))$.
		\end{enumerate}
		\end{enumerate}
	This completes the proof.	
\end{proof}

It comes from the above lemma that there exists $C>0$ such that for all $x \in \mathbb{R}$ and $r>0$, we have 
	\begin{equation}
	    \mu(I(x,2r))\leq C\mu(I(x,r)),\label{rel12}
	\end{equation}
thanks to \cite{De}.

In \cite{FFK2}, the space $(L^q,L^p)^\alpha(X,d,\mu)$ is defined for $1\leq q\leq\alpha\leq p\leq\infty$ as the set of $\mu$-measurable functions $f$ satisfying $\left\|f\right\|_{(L^q,L^p)^\alpha(X,d,\mu)}<\infty$, where 
$\left\|f\right\|_{(L^q,L^p)^\alpha(X,d,\mu)}=\sup_{r>0}\ _{r}\left\|f\right\|_{(L^q,L^p)^\alpha(X,d,\mu)}$ with 
$$\ _{r}\left\|f\right\|_{(L^q,L^p)^\alpha(X)}=\left\{\begin{array}{lll}\left[\int_{X}\left(\mu(I(y,r))^{\frac{1}{\alpha}-\frac{1}{q}-\frac{1}{p}}\left\|f\chi_{I(y,r)}\right\|_{L^q(X,d,\mu)}\right)^{p}d\mu(y)\right]^{\frac{1}{p}}&\text{ if }&p<\infty\\
\sup_{y\in X}\mu(I(y,r))^{\frac{1}{\alpha}-\frac{1}{q}}\left\|f\chi_{I(y,r)}\right\|_{L^q(X,d,\mu)}&\text{ if }&p=\infty
\end{array}\right.$$

It is proved in \cite[Theorem 2.9]{FFK2} that if $\alpha\in\left\lbrace q,p\right\rbrace $ then 
\begin{equation}
\Vert\cdot\Vert_{\alpha}\lsim \Vert\cdot\Vert_{(L^q,L^p)^\alpha(\mathbb R,\vert\cdot \vert,\mu)}.\label{inclusionL}
\end{equation}
where $\mu$ is de measure defined by  (\ref{*}). For the same $\mu$, we have the following result:
 \begin{lem} \label{lem 5}
Let $x \in \mathbb{R}$. For all $r>0$ $$ \mu(I(0,r))  \leq 2 \mu(I(x,r)).$$
\end{lem}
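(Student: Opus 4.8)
The plan is to reduce everything to the one–dimensional integral defining $\mu$ and to argue exactly as in the proof of Lemma~\ref{lem 8}. Since the density $t\mapsto|t|^{2\kappa+1}$ of $\mu$ is even, one has $\mu(I(x,r))=\mu(I(|x|,r))$ for every $x\in\mathbb R$, so I may assume $x\ge 0$ throughout. I will use that $I(0,r)=(-r,r)=B_r$, hence $\mu(I(0,r))=\mathfrak b_\kappa r^{2\kappa+2}$, and that, with $c_\kappa=(2^{\kappa+1}\Gamma(\kappa+1))^{-1}$, evenness gives $\mu((0,r))=c_\kappa\int_0^r t^{2\kappa+1}\,dt=\tfrac12\,\mu(I(0,r))$.

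Next I would distinguish two cases according to whether $x<r$ or $x\ge r$. If $0\le x<r$, then $(0,r)\subset(x-r,x+r)=I(x,r)$ (because $x-r<0\le r\le x+r$), so $\mu(I(x,r))\ge\mu((0,r))=\tfrac12\,\mu(I(0,r))$. If $x\ge r$, then $I(x,r)\subset(0,\infty)$ and the map $x\mapsto\mu(I(x,r))=c_\kappa\int_{x-r}^{x+r}t^{2\kappa+1}\,dt$ is nondecreasing on $[r,\infty)$, its derivative being $c_\kappa\big[(x+r)^{2\kappa+1}-(x-r)^{2\kappa+1}\big]\ge 0$ since $2\kappa+1>0$ and $x+r>x-r\ge 0$. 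Hence $\mu(I(x,r))\ge\mu(I(r,r))=\mu((0,2r))\ge\mu((0,r))=\tfrac12\,\mu(I(0,r))$. In either case $\mu(I(0,r))\le 2\,\mu(I(x,r))$, which is the claim.

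The computation is entirely elementary and I do not foresee a genuine obstacle; the only point that actually uses the standing hypothesis $\kappa>-1/2$ is the monotonicity of $x\mapsto\mu(I(x,r))$ on $[r,\infty)$, which rests on $2\kappa+1>0$. If one prefers to avoid differentiating, the same lower bound for $x\ge r$ follows from the substitution $t=x+s$ together with the pointwise inequality $(x+s)^{2\kappa+1}\ge(r+s)^{2\kappa+1}$ for $s\in(-r,r)$, which yields $\mu(I(x,r))\ge c_\kappa\int_{-r}^{r}(r+s)^{2\kappa+1}\,ds=c_\kappa(2\kappa+2)^{-1}(2r)^{2\kappa+2}\ge\mu((0,r))$. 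A third possibility is to combine Lemma~\ref{lem 8} with the equally elementary inequality $\mu(B_r)\le\mu(B(x,r))$, readable off the case analysis already carried out there, and with $\mu(I(0,r))=\mu(B_r)$.
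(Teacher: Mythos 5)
Your proof is correct and follows essentially the same route as the paper's: the same split into the cases $x<r$ and $x\ge r$, with the hard case settled by the monotonicity of $x\mapsto\mu(I(x,r))$ on $[r,\infty)$ and evaluation at $x=r$, which is exactly the content of the paper's observation that its auxiliary function $\phi$ is increasing for $t\ge 1$ with $\phi(1)>0$. Your phrasing of the first case via the inclusion $(0,r)\subset I(x,r)$ and the identity $\mu((0,r))=\tfrac12\mu(I(0,r))$ is a slight streamlining of the paper's explicit formula comparisons, but the underlying argument is identical.
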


\begin{proof}
Let's fix  $x \in \mathbb{R}$ and $r>0$.
\begin{enumerate}
\item We suppose that $x < r$. We have 
$$2 \mu(I(x,r)) = c_{\k} ( \k+1)^{-1} [ ( x  + r )^{2\k+2} + (x - r )^{2\k+2}  ]$$
as we can see in the proof of Lemma \ref{lem 8}. Thus, $ \mu(I(0,r))  \leq 2 \mu(I(x,r)).$

\item We suppose now $x \ge r$. 
According to the proof of Lemma \ref{lem 8} once more, we have 
$$2 \mu(I(x,r)) = c_{\k} ( \k+1)^{-1} [ ( x  + r )^{2\k+2} - (x - r )^{2\k+2}  ]$$
Put  $A=  ( x  + r )^{2\k+2} - (x - r )^{2\k+2} - r^{2\k+2} = r^{2\k+2} [ ( \frac{x}{r}  + 1 )^{2\k+2} - (1 - \frac{x}{r})^{2\k+2} - 1]=\phi(\frac{x}{r})$ with 
$\phi (t) =r^{2\k+2}[ ( t  + 1 )^{2\k+2} - (1 - t)^{2\k+2} - 1]$. The function $\phi$ is increasing for  $t\geq 1$ and $\frac{x}{r}\geq 1$. Thus $A\geq \phi(1)= 2^{2\k+2} -1 >0$. 

Hence 
$2 \mu(I(x,r)) > \mu(I(0,r)) $. 
\end{enumerate}
\end{proof}

It comes from the above lemma and a result in \cite{MH} that for $1\leq q\leq p\leq\infty$ and $\mu$ as in (\ref{*}), the spaces $(L^q,L^p)^\alpha(\mu)\subset (L^q,L^p)^\alpha(\mathbb R,\vert\cdot\vert,\mu)$. More precisely, we have 
$\Vert\cdot\Vert_{(L^q,L^p)^\alpha(\mathbb R,\vert\cdot\vert,\mu)}\leq \Vert\cdot\Vert_{q,p,\alpha}$. 
However, we still have the following analogue of the Inequality (\ref{inclusionL}) in $(L^q,L^p)^\alpha(\mu)$ spaces.
\begin{prop}
Let $1\leq q\leq\alpha\leq p\leq\infty$.
 If $\alpha\in\left\{q,p\right\}$ then $\left(L^{q},L^{p}\right)^{\alpha}(\mu)=L^{\alpha}(\mu)$.
\end{prop}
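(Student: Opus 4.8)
The plan is to prove the two cases $\alpha=q$ and $\alpha=p$ separately, showing in each case that $\left\|f\right\|_{q,p,\alpha}\approx\left\|f\right\|_{\alpha}$, since one inclusion ($L^{\alpha}(\mu)\hookrightarrow(L^{q},L^{p})^{\alpha}(\mu)$) is already Inequality~(\ref{lebesguedansamalg}). So the real work is the reverse inequality $\left\|f\right\|_{\alpha}\lsim\left\|f\right\|_{q,p,\alpha}$ when $\alpha\in\{q,p\}$.

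First I would treat $\alpha=p$ (with $p<\infty$; the case $p=\infty$ forces $q=\alpha=p=\infty$ and is trivial since $(L^{\infty},L^{\infty})(\mu)=L^{\infty}(\mu)$). Here the exponent of $\mu(B_r)$ in the definition of $\left\|\cdot\right\|_{q,p,\alpha}$ is $\frac1p-\frac1q-\frac1p=-\frac1q$, so
\begin{equation*}
\left\|f\right\|_{q,p,p}=\sup_{r>0}\mu(B_r)^{-\frac1q}\left\|\left[\int_{\mathbb R}(\tau_y\left|f\right|^q)\chi_{B_r}(x)\,d\mu(x)\right]^{\frac1q}\right\|_p .
\end{equation*}
Raising to the power $p$ and using Fubini together with $\int_{\mathbb R}(\tau_y\left|f\right|^q)\chi_{B_r}(x)\,d\mu(x)=\left|f\right|^q\ast_\kappa\chi_r(y)$ (the identity noted before Proposition~\ref{inclaqp}), the inner $p$-norm becomes, since $p/q\ge 1$,
\begin{equation*}
\left\|\,\left|f\right|^q\ast_\kappa\chi_{B_r}\right\|_{p/q}^{p/q}\cdot q^{-1}\text{-th power handling}.
\end{equation*}
Rather than Fubini, the cleanest route is: for fixed $r$, the function $y\mapsto\int_{B_r}\tau_y\left|f\right|^q\,d\mu$ tends, after dividing by $\mu(B_r)$, to $\left|f(y)\right|^q$ a.e.\ as $r\to0$ by~(\ref{translate2}); but we need a lower bound uniform in $r$, not a limit. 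So instead I would bound $\left\|f\right\|_p$ from below by testing the supremum as $r\to0$: for each fixed $y$ outside a null set, Fatou's lemma gives $\mu(B_r)^{-1}\int_{B_r}\tau_y\left|f\right|^q\,d\mu\to\left|f(y)\right|^q$, hence $\liminf_{r\to0}\mu(B_r)^{-1/q}\left[\int_{\mathbb R}(\tau_y\left|f\right|^q)\chi_{B_r}\,d\mu\right]^{1/q}=\left|f(y)\right|$ a.e.; taking $L^p$-norms and applying Fatou's lemma once more in the $y$ variable yields $\left\|f\right\|_p\leq\liminf_{r\to0}\mu(B_r)^{-1/q}{}_r\left\|f\right\|_{q,p}\leq\left\|f\right\|_{q,p,p}$. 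This is exactly the argument used in the proof of Proposition~\ref{norm1} to show $\left\|f\right\|_{q,p}=0\Rightarrow f=0$, only kept quantitative.

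For $\alpha=q$ the exponent of $\mu(B_r)$ is $\frac1q-\frac1q-\frac1p=-\frac1p$, so $\left\|f\right\|_{q,q,p}=\sup_{r>0}\mu(B_r)^{-1/p}\,{}_r\left\|f\right\|_{q,p}$. Now I would let $r\to\infty$ instead. Using the convolution identity and Minkowski/Young as in Proposition~\ref{inclaqp}, one has ${}_r\left\|f\right\|_{q,p}^q=\left\|\,\left|f\right|^q\ast_\kappa\chi_{B_r}\right\|_{p/q}$; alternatively, and more directly, by Fubini
\begin{equation*}
{}_r\left\|f\right\|_{q,p}\geq\text{(when }p\geq q\text{, crude bound)}\quad\left(\int_{\mathbb R}\left|f(x)\right|^q\,\mu(\{y:x\in B(y,r)\})^{?}\cdots\right),
\end{equation*}
which is awkward; the efficient way is to note $\mu(B_r)^{-1/p}{}_r\left\|f\right\|_{q,p}=\mu(B_r)^{-1/p}\left\|\,\left|f\right|^q\ast_\kappa\chi_{B_r}\right\|_{p/q}^{1/q}$ and that $\left|f\right|^q\ast_\kappa\chi_{B_r}(y)=\int_{B(y,r)}\tau_y\left|f\right|^q(x)\,d\mu(x)$ increases to $\left\|\,\left|f\right|^q\right\|_{1}$-type mass; then split $\mathbb R$ into a large ball $B_R$ and use that for $r$ huge, $\tau_y\chi_{B_r}(-x)\to1$ for $x,y$ in a fixed compact set, so by monotone convergence $\mu(B_r)^{-q/p}\left\|\,\left|f\right|^q\ast_\kappa\chi_{B_r}\right\|_{p/q}\geq\mu(B_r)^{-q/p}\left(\int_{B_R}\big(\int_{B_R}\tau_y\left|f\right|^q\chi_{B_r}\,d\mu\big)^{p/q}d\mu(y)\right)^{q/p}\to$ something comparable to $\left\|f\chi_{B_R}\right\|_q^q$ after using $\mu(B_r)^{-1/p}\cdot\mu(B_R)^{1/p}\to$ requires care — so the honest statement is $\liminf_{r\to\infty}\mu(B_r)^{-1/p}{}_r\left\|f\right\|_{q,p}\gtrsim\left\|f\right\|_q$, obtained by dominated/monotone convergence on a fixed $B_R$ and then letting $R\to\infty$ by Fatou.

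I expect the main obstacle to be the $\alpha=q$ direction: controlling $\mu(B_r)^{-1/p}$ against $\tau_y\chi_{B_r}$ as $r\to\infty$ is delicate because of the estimate~(\ref{**}) (the Dunkl translate of an indicator is not an indicator and decays like $(r/\left|x\right|)^{2\kappa+1}$ off-diagonal), so one cannot simply replace $\tau_y\chi_{B_r}$ by $\chi_{B(y,r)}$. The trick is that for $\left|x\right|,\left|y\right|\leq R$ fixed and $r>2R$ one has $B(y,r)\supset B_R\ni x$ and in fact $\tau_y\chi_{B_r}(-x)=1$ there (since the translate of $\chi_{B_r}$ equals $1$ on $B(y,r-\text{something})$ — this needs to be extracted from the support/positivity properties, or one uses~(\ref{translate2}) scaled), giving $\int_{\mathbb R}(\tau_y\left|f\right|^q)\chi_{B_r}\,d\mu\geq\int_{B_R}\left|f\right|^q\,d\mu$ for all $\left|y\right|\leq R$, whence ${}_r\left\|f\right\|_{q,p}\geq\mu(B_R)^{1/p}\left(\int_{B_R}\left|f\right|^q\,d\mu\right)^{1/q}$; dividing by $\mu(B_r)^{1/p}$, letting $r\to\infty$ is useless — so instead one keeps $r$ fixed equal to $2R$: $\mu(B_{2R})^{-1/p}{}_{2R}\left\|f\right\|_{q,p}\geq\mu(B_{2R})^{-1/p}\mu(B_R)^{1/p}\left\|f\chi_{B_R}\right\|_q=C_\kappa^{-1/p}\left\|f\chi_{B_R}\right\|_q$ (using $\mu(B_{2R})=2^{2\kappa+2}\mu(B_R)$), and then $R\to\infty$ with Fatou gives $\left\|f\right\|_{q,q,p}\gtrsim\left\|f\right\|_q$, as desired. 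Once this geometric lemma about $\tau_y\chi_{B_r}$ equalling $1$ on a suitable sub-ball is in hand, both directions close cleanly, and I would conclude equality of the spaces with equivalence of norms.
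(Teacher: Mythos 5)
Your route is genuinely different from the paper's. The paper disposes of the reverse inclusion in two lines: it shows $\Vert f\Vert_{(L^q,L^p)^\alpha(\mathbb R,\vert\cdot\vert,\mu)}\leq\Vert f\Vert_{q,p,\alpha}$ (via Lemma~\ref{lem 5} and the inequality of \cite{MH} comparing $\int_{I(y,\rho)}\vert g\vert d\mu$ with $\int_{B_\rho}\tau_y\vert g\vert d\mu$) and then invokes the known embedding (\ref{inclusionL}) of the homogeneous-type Fofana space into $L^\alpha$ from \cite{FFK2}. You instead argue directly: for $\alpha=p$ you quantify the argument of Proposition~\ref{norm1} by letting $r\to0$, using (\ref{translate2}) and Fatou; for $\alpha=q$ you take $r=2R$ and let $R\to\infty$. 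The $\alpha=p$ half is correct and complete as written, provided you note that $\vert f\vert^q\in L^1_{loc}(\mu)$ (needed to apply (\ref{translate2})) and that your parenthetical dismissal of $p=\infty$ is wrong: $\alpha=p=\infty$ does not force $q=\infty$, and the case $q<\infty$, $\alpha=p=\infty$ is the Dunkl--Morrey space, not $(L^\infty,L^\infty)(\mu)$; fortunately your $r\to0$ argument covers it without change. Your approach buys a self-contained proof with an explicit constant $2^{(2\kappa+2)/p}$, at the price of not reusing the machinery of \cite{FFK2}.

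The one genuine gap is the lemma you yourself flag in the $\alpha=q$ half: that $\tau_{-y}\chi_{B_{2R}}(x)=1$ for $\vert x\vert+\vert y\vert<2R$. This is true, but it cannot be extracted from anything recalled in the paper --- (\ref{**}) is only an upper bound, and the support statement gives no positivity. It requires the explicit R\"osler--Soltani product formula for the Dunkl translate of an even function (the representing measure is a probability measure carried by $\{z:\bigl\vert\vert x\vert-\vert y\vert\bigr\vert\leq\vert z\vert\leq\vert x\vert+\vert y\vert\}$, so the translate of $\chi_{B_r}$ equals the total mass $1$ once $\vert x\vert+\vert y\vert<r$). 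Alternatively, and more in the spirit of the paper, you can bypass the pointwise identity entirely by applying to $g=\vert f\vert^q$ the inequality $\int_{I(y,\rho)}\vert g\vert d\mu\lsim\int_{B_\rho}\tau_y\vert g\vert d\mu$ quoted from \cite{MH} in the proof of Lemma~\ref{lem3}: for $\vert y\vert\leq R$ one has $I(y,2R)\supset B_R$, hence $\int_{B_{2R}}\tau_y\vert f\vert^q d\mu\gsim\int_{B_R}\vert f\vert^q d\mu$, which is exactly the lower bound your argument needs. With that substitution your proof closes.
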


\begin{proof}
Let $1\leq q\leq\alpha\leq p\leq\infty$.  It comes from Inequality (\ref{lebesguedansamalg})  that 
$L^{\alpha}(\mu)\subset\left(L^{q},L^{p}\right)^{\alpha}(\mu)$. So, all we have to prove is the reverse inclusion.

Let $f\in \left(L^{q},L^{p}\right)^{\alpha}(\mu)$. We have 
$$\Vert f\Vert_{(L^q,L^p)^\alpha(\mathbb R,\vert\cdot \vert,\mu)}\leq \Vert f\Vert_{q,p,\alpha}<\infty.$$
Therefore $f\in  (L^q,L^p)^\alpha(\mathbb R,\vert\cdot\vert,\mu)$ and 
 the result follows from Inequality (\ref{inclusionL}).
\end{proof}

 For a $\mu$-measurable function $f$ in the space of homogeneous type $(X,d,\mu)$, we define the Hardy-Littlewood maximal function $M_\mu f$ by
$$ M_\mu f(x)=\sup_{r>0}\mu(I(x,r))^{-1}\int_{I(x,r)}\left|f(y)\right|d\mu(y).$$
The following result is an extension of the analogue on Euclidean space given in \cite{Fe1}. Its proof is just an adaptation of the one given there.
\begin{prop}\label{contmax}
Let $1<q\leq\alpha\leq p\leq\infty$. There exists $C>0$ such that 
$$\left\| M_\mu f\right\|_{(L^q,L^p)^\alpha(X,d,\mu)}\leq C \left\|f\right\|_{(L^q,L^p)^\alpha(X,d,\mu)}$$
for all $f\in (L^q,L^p)^\alpha(X,d,\mu)$. 
\end{prop}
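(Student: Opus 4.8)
The plan is to prove the boundedness of $M_\mu$ on $(L^q,L^p)^\alpha(X,d,\mu)$ by reducing everything to two classical facts: the $L^q(\mu)$-boundedness of $M_\mu$ on a space of homogeneous type (valid for $q>1$, and which follows from the Vitali-type covering lemma together with the doubling condition (\ref{0.2})), and a pointwise splitting of $M_\mu(f\chi_{I(y,r)^c})$ on the ball $I(y,r)$. First I would fix $r>0$ and a ball $I=I(y,r)$, and write $f=f\chi_{I^*}+f\chi_{(I^*)^c}$ where $I^*=I(y,\lambda r)$ for a fixed dilation constant $\lambda$ depending only on $\mathfrak c$ (large enough that the triangle inequality forces any ball meeting $I$ with radius $\leq r$ to be contained in $I^*$). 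For the local part, $\left\|M_\mu(f\chi_{I^*})\chi_I\right\|_{L^q(\mu)}\lesssim\left\|f\chi_{I^*}\right\|_{L^q(\mu)}$ directly by the strong $(q,q)$ bound.

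For the far part, the key pointwise estimate is that for $x\in I$ and any $s>0$, the average $\mu(I(x,s))^{-1}\int_{I(x,s)}|f(z)\chi_{(I^*)^c}(z)|\,d\mu(z)$ is nonzero only when $I(x,s)$ reaches outside $I^*$, which forces $s\gtrsim r$; then, using the doubling property (\ref{0.05}) to compare $\mu(I(x,s))$ with $\mu(I(y,s))$ and covering $I(x,s)$ by $I(y,Cs)$, one gets $M_\mu(f\chi_{(I^*)^c})(x)\lesssim\sup_{s\gtrsim r}\mu(I(y,s))^{-1}\int_{I(y,Cs)}|f|\,d\mu$, a quantity independent of $x\in I$. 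I would then bound this supremum, for each dyadic scale $2^k r$ with $k\geq 0$, by Hölder's inequality: $\mu(I(y,2^k r))^{-1}\int_{I(y,C2^k r)}|f|\,d\mu\leq\mu(I(y,2^k r))^{-1}\mu(I(y,C2^kr))^{1-1/q}\left\|f\chi_{I(y,C2^kr)}\right\|_{L^q(\mu)}$, and the doubling/reverse-doubling estimates (\ref{0.05}) and (\ref{revd}) let me bring in the weights $\mu(I(y,2^kr))^{1/\alpha-1/q-1/p}$ at cost of a geometric factor $2^{-k\epsilon}$ with $\epsilon>0$ coming from $\alpha\leq p$ (for $p<\infty$) or from $\alpha<\infty$ (handled separately when $p=\infty$).

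Assembling, I would take the $L^p(\mu)$-norm in $y$ of $\mu(I(y,r))^{1/\alpha-1/q-1/p}\left\|M_\mu f\,\chi_{I(y,r)}\right\|_{L^q(\mu)}$, split along the two parts of $f$; the local part is controlled by $\sup_r\,_r\left\|f\right\|_{(L^q,L^p)^\alpha}$ after another use of doubling to replace $I^*$ by $I$ in the weight and a bounded-overlap argument for the dilated balls $I(y,\lambda r)$; the far part is controlled by summing the geometric series $\sum_{k\geq 0}2^{-k\epsilon}\,_{2^kr}\left\|f\right\|_{(L^q,L^p)^\alpha}\lesssim\left\|f\right\|_{(L^q,L^p)^\alpha(X,d,\mu)}$, using Minkowski's inequality in $L^p(\mu)$ to pull the sum outside the norm. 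Finally I take the supremum over $r>0$. The case $p=\infty$ (Morrey-type) follows the same scheme with the $\sup_{y}$ replacing the $L^p$-norm and no Minkowski step needed.

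The main obstacle I expect is the bounded-overlap / scale-comparison bookkeeping: making sure that replacing $I(y,r)$ by the dilate $I(y,\lambda r)$ in the local term, and comparing $\mu(I(x,s))$, $\mu(I(y,s))$, $\mu(I(y,Cs))$ across all scales, produces only constants depending on $\mathfrak c$, $C_\mu$, $D_\mu$, $\tilde C_\mu$, $\delta_\mu$ — and, crucially, that the exponent $\epsilon$ governing the geometric decay is strictly positive precisely under the hypothesis $q\leq\alpha\leq p$ with $q>1$. The strict inequality $q>1$ is used only in the $L^q$-boundedness of $M_\mu$; the geometric decay uses only $\alpha\leq p$ (resp. $\alpha<\infty$), which is why the statement survives down to $q=\alpha$ but the analogous weak statement is needed when $q=1$.
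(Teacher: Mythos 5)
Your argument is sound and arrives at the same dyadic structure as the paper's proof, but the key step is organised differently. You split the function itself, $f=f\chi_{I^*}+f\chi_{(I^*)^c}$, handle the local piece with the strong $(q,q)$ bound for $M_\mu$ on a space of homogeneous type, and control the far piece by a direct pointwise estimate $M_\mu(f\chi_{(I^*)^c})(x)\lesssim\sup_{s\gtrsim r}\mu(I(y,s))^{-1}\int_{I(y,Cs)}|f|\,d\mu$ followed by H\"older on dyadic annuli. The paper instead splits the \emph{domain of integration} after invoking the P\'erez--Wheeden inequality $\int_{I(y,r)}(M_\mu f)^q\,d\mu\lesssim\int_X|f|^qM_\mu(\chi_{I(y,r)})\,d\mu$ (their display (\ref{ffs}), from \cite[Theorem 5.1]{PW}); the local part is then handled by $M_\mu\chi_{I(y,r)}\le 1$ and the far part by the decay $M_\mu\chi_{I(y,r)}(x)\lesssim\mu(I(y,r))/\mu(I(y,d(x,y)))$ of Lemma \ref{maxcha}. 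Both routes produce the same geometric series via reverse doubling. Yours is more self-contained (only the unweighted maximal theorem is needed, no appeal to \cite{PW}), at the price of the dilation bookkeeping you describe; note that the bounded-overlap argument you anticipate for the local term is not actually needed, since after replacing $\mu(I(y,r))$ by $\mu(I(y,\lambda r))$ in the weight via doubling you can simply dominate $_{\lambda r}\|f\|$ by the supremum over all scales.

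One point to tighten: your decay exponent is $\epsilon=\delta_\mu(\frac{1}{\alpha}-\frac{1}{p})$ (times $q$ in the paper's normalisation), which is strictly positive only when $\alpha<p$, not when $\alpha\le p$ as you state; at $\alpha=p<\infty$ the geometric series degenerates. The paper's own series $\sum_k 2^{-k\delta_\mu q(1/\alpha-1/p)}$ has exactly the same limitation, and the endpoint is rescued there by the separate identification $(L^q,L^p)^p=L^p$ (so the claim reduces to the strong $(p,p)$ bound). You should make that reduction explicit rather than attributing the positivity of $\epsilon$ to $\alpha\le p$.
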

For the proof, we will use the following well known lemma. The proof is given just for completeness.
\begin{lem}\label{maxcha}There exists a constant $C>0$ such that for all $r>0$,
$$ M_\mu \chi_{I(y,r)}(x)\leq C\frac{\mu(I(y,r)}{\mu(I(y,d(x,y))},\ x\neq y\in X.$$
\end{lem}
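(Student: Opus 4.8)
The plan is to bound $M_\mu\chi_{I(y,r)}(x)$ by splitting the supremum defining the maximal function according to the size of $\rho$ relative to $d(x,y)$. Fix $x\neq y$ and set $\ell=d(x,y)>0$. For any $\rho>0$, the ball $I(x,\rho)$ meets $I(y,r)$ only if $\rho$ is not too small, and in that case both balls are comparable to a ball of radius $\rho+\ell$ centered at either point; this is where the quasi-triangle inequality (constant $\mathfrak c$) and the doubling property \eqref{0.2}, in the form \eqref{0.05}, come in.

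First I would observe that if $I(x,\rho)\cap I(y,r)=\emptyset$ then $\frac{1}{\mu(I(x,\rho))}\int_{I(x,\rho)}\chi_{I(y,r)}\,d\mu=0$, so only radii $\rho$ with $I(x,\rho)\cap I(y,r)\neq\emptyset$ contribute. For such $\rho$ there is a point $z\in I(x,\rho)\cap I(y,r)$, whence $d(x,y)\le\mathfrak c(d(x,z)+d(z,y))<\mathfrak c(\rho+r)$; in particular either $\rho\gsim \ell$ or $r\gsim\ell$.

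Next I would estimate the average. We have
$$\frac{1}{\mu(I(x,\rho))}\int_{I(x,\rho)}\chi_{I(y,r)}(u)\,d\mu(u)=\frac{\mu(I(x,\rho)\cap I(y,r))}{\mu(I(x,\rho))}\le\frac{\mu(I(y,r))}{\mu(I(x,\rho))},$$
and since $I(y,r)\subset I(y,\mathfrak c(r+2\mathfrak c\rho))\subset\dots$ — more cleanly: using $z\in I(x,\rho)\cap I(y,r)$ one gets $I(x,\rho)\subset I(y,C(\rho+r))$ and $I(y,r)\subset I(x,C(\rho+r))$ for a constant $C$ depending only on $\mathfrak c$, so by doubling $\mu(I(x,\rho))\approx\mu(I(y,\rho))$ when $\rho\gsim\ell$. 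Splitting into the case $\rho\ge\ell$ (where $\mu(I(x,\rho))\gsim\mu(I(y,\ell))$ by \eqref{0.05} applied to $I(y,\ell)\subset I(y,C(\rho+\ell))\subset I(x,C'\rho)$, together with monotonicity) and the case $\rho<\ell$ (which forces $r\gsim\ell$, hence $\mu(I(y,r))\gsim\mu(I(y,\ell))$ and the trivial bound $1$ on the average suffices since $1\le C\mu(I(y,r))/\mu(I(y,\ell))$), we obtain in both cases
$$\frac{1}{\mu(I(x,\rho))}\int_{I(x,\rho)}\chi_{I(y,r)}\,d\mu\le C\,\frac{\mu(I(y,r))}{\mu(I(y,\ell))}=C\,\frac{\mu(I(y,r))}{\mu(I(y,d(x,y)))}.$$
Taking the supremum over $\rho>0$ yields the claim.

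**The main obstacle** is bookkeeping the constants in the geometric comparisons: one must be careful that all the radii appearing ($C(\rho+r)$, etc.) produce, via \eqref{0.05}, a ratio controlled by a fixed power of $r(I_1)/r(I_2)$, and that the two regimes $\rho\ge d(x,y)$ and $\rho<d(x,y)$ are handled uniformly so that a single constant $C$ (depending only on $\mathfrak c$, $C'_\mu$, $D_\mu$) works. Once the case $I(x,\rho)\cap I(y,r)\neq\emptyset$ with $\rho\ge d(x,y)$ is reduced to the doubling inequality, everything else is routine.
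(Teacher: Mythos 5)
Your proof is correct in substance and follows essentially the same route as the paper's: restrict to radii $\rho$ with $I(x,\rho)\cap I(y,r)\neq\emptyset$, deduce $d(x,y)<\mathfrak c(\rho+r)$ from the quasi-triangle inequality, and use doubling to compare $\mu(I(y,d(x,y)))$ with $\mu(I(x,\rho))$ when $\rho$ dominates and with $\mu(I(y,r))$ (so that the trivial bound $1$ on the average suffices) when $r$ dominates. The one adjustment is that your dichotomy must be taken at $\rho\ge d(x,y)/(2\mathfrak c)$ versus $\rho<d(x,y)/(2\mathfrak c)$ (or at $\rho\le r$ versus $\rho>r$, as the paper does) rather than at $\rho\ge d(x,y)$ versus $\rho<d(x,y)$, since $\rho<d(x,y)$ by itself does not force $r$ to be comparable to $d(x,y)$; with the corrected threshold both branches close exactly as you wrote.
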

\begin{proof}
Let $r>0$, $x,y\in X$. We have 
$$M_{\mu}(\chi_{I(y,r)})(x)= \sup_{\rho>0} \mu(I(x,\rho))^{-1} \int_{I(x,\rho)} \chi_{I(y,r)} (z) d\mu (z) = \sup_{\rho>0} \frac{\mu(I(y,r) \cap I(x,\rho) )}{\mu(I(x,\rho))} \leq 1.$$
Let's fix $\rho>0$. If $d(x,y) \leq \rho$ then the result is immediate. Assume $\rho>d(x,y)$. We also assume that $I(x,r)\cap I(y,\rho)\neq\emptyset$, since otherwise there is nothing to prove.
We have $d(x,y) \leq \mathfrak c (d(x,u) + d(u,y)) < \mathfrak c ( \rho + r )\leq 2\mathfrak c \max(\rho,r)$, where $u\in I(x,r)\cap I(y,\rho)$. 
 It comes from the doubling condition that 
$$\mu((I(x,d(x,y))\approx\mu(I(y,d(x,y)) \leq \left\{\begin{array}{lll}\mu(I(y,2\mathfrak c  r)) \leq C \mu (I(y, r))&\text{ if }&\rho\leq r\\
\mu(I(x,2\mathfrak c \rho)) \leq C \mu (I(x, \rho))&\text{ if }&\rho> r\end{array}\right.
.$$
Which ends the proof.
\end{proof}

\begin{proof}[Proof of Proposition \ref{contmax}]
For all $y\in X$ and $r>0$, we have
\begin{equation}
\int_{I(y,r)}M_\mu(f)^q(x)d\mu(x)\lsim\int_{X}\left|f(x)\right|^{q}M_\mu(\chi_{I(y,r)})(x)d\mu(x),\label{ffs}
\end{equation}
thanks to  \cite[Theorem 5.1]{PW}. 

Let's  fix $y\in X$ and $r>0$. It comes from (\ref{ffs}) that
\begin{eqnarray*}
\left\| M_\mu(f)\chi_{I(y,r)}\right\|^{q}_{L^q(X,d,\mu)}&\lsim &\int_{I(y,2\mathfrak c r)}\left|f(x)\right|^{q} M_\mu(\chi_{I(y,r)})(x)d\mu(x)\\
&+&\int_{I(y,2\mathfrak c r)^{c}}\left|f(x)\right|^{q} M_\mu(\chi_{I(y,r)})(x)d\mu(x)=I+II.
\end{eqnarray*}
But, $I\lsim\left\|f\chi_{I(y,2\mathfrak c r)}\right\|^{q}_{L^q(X,d,\mu)}$. It remains to estimate II. Applying Lemma \ref{maxcha}, we have 
\begin{eqnarray*}II&\lsim&\sum^{\infty}_{k=1}\int_{2^k\mathfrak c r\leq d(x,y)<2^{k+1}\mathfrak c r}\left|f(x)\right|^{q}\frac{\mu(I(y,r))}{\mu(I(y,d(x,y)))}d\mu(x)\\
&\lsim&\sum^{\infty}_{k=1}\int_{2^k\mathfrak c r\leq d(x,y)<2^{k+1}\mathfrak c r}\left|f(x)\right|^{q}\frac{\mu(I(y,r))}{\mu(I(y,2^k\mathfrak c r))}d\mu(x)\\
&\lsim&\sum^{\infty}_{k=1}\frac{\mu(I(y,r))}{\mu(I(y,2^k\mathfrak c r))}\left\|f\chi_{I(y,2^{k+1}\mathfrak c r)}\right\|^{q}_{L^q(X,d,\mu)}.
\end{eqnarray*}
Therefore 
\begin{equation}
\begin{array}{lll}\left\|M_\mu(f)\chi_{I(y,r)}\right\|^{q}_{L^q(X,d,\mu)}&\lsim& \left\|f\chi_{I(y,2\mathfrak c r)}\right\|^{q}_{L^q(X,d,\mu)}\\
&+&\sum^{\infty}_{k=1}\frac{\mu(I(y,r))}{\mu(I(y,2^k\mathfrak c r))}\left\|f\chi_{I(y,2^{k+1}\mathfrak c r)}\right\|^{q}_{L^q(X,d,\mu)}.\end{array}\label{equa1}
\end{equation}
Multiplying both sides of (\ref{equa1}) by $\mu(I(y,r))^{q(\frac{1}{\alpha}-\frac{1}{p}-\frac{1}{q})}$ and using the reverse doubling condition (\ref{revd}), we obtain 
$$\begin{aligned}&\mu(I(y,r))^{q(\frac{1}{\alpha}-\frac{1}{p}-\frac{1}{q})}\left\|M_\mu(f)\chi_{I(y,r)}\right\|^{q}_{L^q(X,d,\mu)}\\
&\ \ \ \ \ \ \ \lsim\mu(I(y,2\mathfrak c r))^{q(\frac{1}{\alpha}-\frac{1}{p}-\frac{1}{q})}\left\|f\chi_{I(y,2\mathfrak c r)}\right\|^{q}_{L^q(X,d,\mu)}\\
&\ \ \ \ \ \ \ +\sum^{\infty}_{k=1}\left(\frac{1}{2^{\delta_\mu q(\frac{1}{\alpha}-\frac{1}{p})}}\right)^{k}\mu(I(y,2^{k+1}\mathfrak c r))^{q(\frac{1}{\alpha}-\frac{1}{p}-\frac{1}{q})}\left\|f\chi_{I(y,2^{k+1}\mathfrak c r)}\right\|^{q}_{L^q(X,d,\mu)},\end{aligned} $$
for all $y\in X$ and $r>0$. Thus, taking the $L^{\frac{p}{q}}$-norms of both sides of the above estimation leads to
$$ _{r}\left\| M_\mu f\right\|^{q}_{(L^q,L^p)^\alpha(X,d,\mu)} \lsim\left\|f\right\|^{q}_{(L^q,L^p)^\alpha(X,d,\mu)}+\sum^{\infty}_{k=1}\left(\frac{1}{2^{\delta_\mu q(\frac{1}{\alpha}-\frac{1}{p})}}\right)^{k}\ _{2^{k+1}Kr}\left\|f\right\|^{q}_{(L^q,L^p)^\alpha(X,d,\mu)}.$$
The result follows from the definition of the $(L^q,L^p)^\alpha(X)$ spaces and the convergence of the series $\sum^{\infty}_{k=1}\left(\frac{1}{2^{\delta_\mu q(\frac{1}{\alpha}-\frac{1}{p})}}\right)^{k}$.
\end{proof}

\section{Proof of Theorems \ref{maxi} and \ref{weakmaxi}}
For the proof of these theorems, we need some lemmas.
\begin{lem} \label{lem 6}
Let $x \in \mathbb{R}$, $r>0$. For all  $y \in \mathbb{R}$
$$M_{\mu} (\tau_{x} \chi_{I(0,r)}  ) (y) = M_{\mu} ( \chi_{I(x,r)} ) (y). $$
\end{lem}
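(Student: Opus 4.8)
The plan is to exploit the fact that the maximal operator $M_\mu$ is built from averages over the balls $I(y,\rho)$, and that the Dunkl translation $\tau_x$ is, on the level of its action against indicator functions, governed by the identity relating $\tau_x\chi_{B_r}$ to $\chi_{B(x,r)}$ together with the symmetry $\tau_x f(y)=\tau_y f(x)$ from Theorem 2.5. Concretely, I would first write out both sides explicitly:
\begin{equation*}
M_\mu(\tau_x\chi_{I(0,r)})(y)=\sup_{\rho>0}\frac{1}{\mu(I(y,\rho))}\int_{I(y,\rho)}\tau_x\chi_{I(0,r)}(z)\,d\mu(z),
\end{equation*}
\begin{equation*}
M_\mu(\chi_{I(x,r)})(y)=\sup_{\rho>0}\frac{1}{\mu(I(y,\rho))}\int_{I(y,\rho)}\chi_{I(x,r)}(z)\,d\mu(z)=\sup_{\rho>0}\frac{\mu(I(x,r)\cap I(y,\rho))}{\mu(I(y,\rho))}.
\end{equation*}
So it suffices to prove, for each fixed $\rho>0$, that $\int_{I(y,\rho)}\tau_x\chi_{I(0,r)}(z)\,d\mu(z)=\mu(I(x,r)\cap I(y,\rho))$.

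The key step is to evaluate $\int_{I(y,\rho)}\tau_x\chi_{I(0,r)}(z)\,d\mu(z)$. Since the interval $I(0,r)$ coincides with the Dunkl ball $B_r=\,]-r,r[$, one has $\chi_{I(0,r)}=\chi_{B_r}$, and I would use the symmetry $\tau_x g(z)=\tau_z g(x)$ (valid for $g\in\mathcal E$, then extended by density/positivity as in the development of $\tau_x\chi_{B_r}$ recalled around Estimates (3.3)--(3.5)) to rewrite the integral. The cleanest route is to recognize $\int_{I(y,\rho)}\tau_x\chi_{B_r}(z)\,d\mu(z)=\int_{\mathbb R}\tau_x\chi_{B_r}(z)\,\chi_{B_\rho}(z-\!\!-)\,d\mu(z)$ is a Dunkl convolution: indeed $\int_{B_\rho}(\tau_x g)(z)\,d\mu(z)=g\ast_\kappa\chi_\rho(x)$-type expressions were already used in the paper (cf. the computation preceding Proposition 3.6). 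Using commutativity/associativity of $\ast_\kappa$ and the fact that $\chi_{B_r}\ast_\kappa\chi_{B_\rho}(x)$ is symmetric in the roles of the two balls, together with $\tau_x\chi_{B_r}$ being supported in $B(x,r)$ with total mass $\mu(B_r)$, one identifies the integral with $\int_{B(x,r)}\chi_{I(y,\rho)}$... but here one must be careful: the natural identity produced is in terms of the Dunkl balls $B(x,r)$, not the metric balls $I(x,r)$.

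This is where I expect the real obstacle, and where Lemma 3.1-type relations between $B(\cdot,\cdot)$ and $I(\cdot,\cdot)$ (Lemma \ref{lem 8}) are not by themselves enough — one needs an exact, not merely comparable, identity. The resolution I would pursue: observe that $M_\mu$ only ever sees the balls $I(y,\rho)$ through their $\mu$-measure and the measure of their intersections, and that the defining property of $\tau_x$ we truly need is the reproducing/averaging identity, which for the specific test function $\chi_{I(0,r)}$ should be checked by a direct case analysis on the position of $x$ relative to $0$ and of $y$, $\rho$ relative to everything — exactly paralleling the explicit computations in the proofs of Lemma \ref{lem 8} and Lemma \ref{lem 5}. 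So the concrete plan is: (i) reduce to the per-$\rho$ integral identity; (ii) compute $\tau_x\chi_{I(0,r)}(z)$ explicitly — it is a known piecewise expression in $|x|,|z|,r$ from the Dunkl translation of an indicator of an interval on $\mathbb R$; (iii) integrate over $I(y,\rho)$ and, by the same elementary but tedious case distinctions ($x\ge r$ vs.\ $x<r$, and the position of $y$), match the result to $\mu(I(x,r)\cap I(y,\rho))$; (iv) take the supremum over $\rho$. The bookkeeping in step (iii) is the main labor, but it is routine given the explicit formulas already invoked in the paper; no new analytic input beyond what Theorem 2.5, Estimate (3.5), and the structure of $\mu$ provide is required.
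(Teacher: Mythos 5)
Your reduction of the lemma to the per-$\rho$ identity $\int_{I(y,\rho)}\tau_x\chi_{I(0,r)}(z)\,d\mu(z)=\mu\bigl(I(x,r)\cap I(y,\rho)\bigr)$ is exactly the reduction the paper makes, and you have put your finger on the right difficulty: $\tau_x\chi_{I(0,r)}=\tau_x\chi_{B_r}$ lives on the Dunkl ball $B(x,r)$, not on the metric ball $I(x,r)$. The gap is in your step (iii): the exact identity you propose to verify by case analysis is false, so no amount of bookkeeping can make the two sides match. Two concrete obstructions. First, the Dunkl translation preserves the total integral, so $\int_{\mathbb R}\tau_x\chi_{I(0,r)}\,d\mu=\mu(I(0,r))$, while $\int_{\mathbb R}\chi_{I(x,r)}\,d\mu=\mu(I(x,r))$; letting $\rho\to\infty$ in the per-$\rho$ identity would force $\mu(I(0,r))=\mu(I(x,r))$, which fails for $x\neq0$ because the weight $|t|^{2\kappa+1}$ is not translation invariant (for $|x|\gg r$ one has $\mu(I(x,r))\approx 2c_\kappa|x|^{2\kappa+1}r\gg \mathfrak b_\kappa r^{2\kappa+2}=\mu(I(0,r))$). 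Second, the supports do not match: for $|x|>r$ the translate $\tau_x\chi_{B_r}$ carries mass on the component $\left]-|x|-r,-|x|+r\right[$ of $B(x,r)$, where $\chi_{I(x,r)}$ vanishes. In fact even the weaker statement $M_\mu(\tau_x\chi_{I(0,r)})\approx M_\mu(\chi_{I(x,r)})$ with constants independent of $x$ and $r$ fails: for $x>0$ large relative to $r$ and $y=10x$ one checks $M_\mu(\chi_{I(x,r)})(y)\approx r/x$, whereas $M_\mu(\tau_x\chi_{B_r})(y)\leq\sup_{\rho\geq 9x}\mu(B_r)/\mu(I(y,\rho))\approx(r/x)^{2\kappa+2}$, and $2\kappa+2>1$.

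For comparison, the paper's own proof attempts precisely this per-$\rho$ identity through a four-step chain of equalities; the faulty step is the third one, where $\tau_{-x}$ is transferred from $\tau_{-x}\chi_{I(y,\rho)}(z)$ onto the frozen argument $0$ in $\chi_{I(z,r)}(0)$. The adjoint relation $\int f\,(\tau_{-x}g)\,d\mu=\int(\tau_x f)\,g\,d\mu$ acts on the integration variable $z$, and applied correctly to $z\mapsto\chi_{I(z,r)}(0)=\chi_{I(0,r)}(z)$ it merely returns the original expression. So your instinct that this is where the real obstacle sits is exactly right, but the obstacle cannot be removed by an exact computation: at best one can hope for one-sided inequalities up to constants (which is all that the applications in Section 6 actually invoke, through $\lsim$), and any repair of the lemma has to be formulated and proved at that level, confronting head-on the $B(x,r)$ versus $I(x,r)$ discrepancy you identified.
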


\begin{proof}
Let $x \in \mathbb{R}$, $r>0$. For $y \in \mathbb{R}$,
\begin{eqnarray*}
 \int_{I(y,\rho)} \tau_{x} \chi_{I(0,r)} (z) d\mu (z) 
 &=& \int_{\mathbb{R}}  \chi_{I(0,r)} (z) \tau_{-x}\chi_{I(y,\rho)}(z) d\mu (z) = \int_{\mathbb{R}}  \chi_{I(z,r)} (0) \tau_{-x}\chi_{I(y,\rho)}(z) d\mu (z) \\
 &=&  \int_{\mathbb{R}}  \tau_{x}\chi_{I(z,r)} (0) \chi_{I(y,\rho)}(z) d\mu (z)  
= 
\int_{I(y,\rho)}  \chi_{I(x,r)} (z)  d\mu (z). 
\end{eqnarray*}
The result follows.
\end{proof}

\begin{lem} \label{lem 1}
Let $r>0$ and $y \in \mathbb{R}$. Then $$ \chi_{B(y,r)} \leq \chi_{I(-y,3r)} + \chi_{I(y,3r)}.$$
\end{lem}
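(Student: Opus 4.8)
The plan is to prove the pointwise estimate $\chi_{B(y,r)} \leq \chi_{I(-y,3r)} + \chi_{I(y,3r)}$ by fixing $z \in B(y,r)$ and showing that $z$ must lie in $I(-y,3r)$ or in $I(y,3r)$; when $z \notin B(y,r)$ the left-hand side vanishes and there is nothing to do. Recall that $I(a,3r) = \{z : |z-a| < 3r\}$ is an ordinary interval, while $B(y,r)$ is the Dunkl ball, defined as $\{z : \max\{0,|y|-r\} < |z| < |y|+r\}$ for $y \neq 0$ and $B(0,r) = \,]-r,r[$.

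First I would dispose of the case $y = 0$: then $B(0,r) = \,]-r,r[ \subset I(0,3r) = I(-0,3r)$, so the inequality holds trivially. For $y \neq 0$, suppose $z \in B(y,r)$, so that $|y|-r < |z| < |y|+r$. The key observation is that $z$ is close in absolute value to $y$, hence close (in the Euclidean sense) either to $y$ or to $-y$, depending on whether $z$ and $y$ have the same sign. Concretely, $\big||z|-|y|\big| < r$, so if $z$ and $y$ have the same sign then $|z-y| = \big||z|-|y|\big| < r < 3r$, giving $z \in I(y,3r)$; if they have opposite signs then $|z+y| = \big||z|-|y|\big| < r < 3r$, giving $z \in I(-y,3r)$. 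The remaining subtlety is the case $|z| \le r$ (which can occur when $|y| < 2r$, so that $|y| - r < |z|$ may fail to force $|z|$ large): then $|z - y| \le |z| + |y| < r + (|z|+r) \le r + r + r = 3r$ — using $|y| < |z| + r \le 2r$ — so $z \in I(y,3r)$ directly, and in fact $z \in I(-y,3r)$ as well. Assembling these cases shows every $z \in B(y,r)$ belongs to $I(-y,3r) \cup I(y,3r)$, which is exactly the claimed pointwise inequality between characteristic functions.

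I do not expect any real obstacle here; the only thing requiring a little care is bookkeeping the sign cases and the small-$|z|$ regime so that the constant $3$ genuinely suffices in all of them. A clean way to organize it is to note uniformly that for $z \in B(y,r)$ one has $\min\{|z-y|,|z+y|\} = \big||z|-|y|\big| < r$ when $z \neq 0$, together with the trivial bound $|z \mp y| \le |z| + |y| < 3r$ when $|z| \le r$; the first handles the bulk and the second patches the origin. This is the argument I would write out.
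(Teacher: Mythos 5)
Your argument is correct and follows exactly the route the paper takes — establishing the set inclusion $B(y,r)\subset I(y,3r)\cup I(-y,3r)$ — which the paper simply asserts in one line without the case analysis. Your write-up supplies the missing verification (and in fact shows the sharper inclusion $B(y,r)\subset I(y,r)\cup I(-y,r)$, so the constant $3$ is generous).
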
   
 
\begin{proof}
Let $r>0$ and $y \in \mathbb{R}$.
We have $B(y,r) \subset I(y,3r) \cup I(-y,3r)$. 
\end{proof}  
For all locally $\mu$-integrable functions $f$, we  put
 $$\widetilde{M } f(x) = \sup\limits_{\rho > 0} \frac{1}{\mu (B(x,\rho))} \int_{B(x,\rho)} \vert f \vert (z) d\mu (z) \quad  x \in \mathbb{R}.$$
 We have the following result :
 
\begin{lem} \label{lem3}
Let $f \in L^{1} _{loc}(\mu)$. For all $y \in \mathbb{R}$, $$  M f(y) \approx \widetilde{M } f(y) \approx M_{\mu} f(y).$$
\end{lem}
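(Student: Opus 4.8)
The plan is to establish the triple equivalence in Lemma \ref{lem3} by a chain of two-sided pointwise estimates, using the definitions of $Mf$, $\widetilde{M}f$, and $M_\mu f$ together with Lemma \ref{lem 6}, Lemma \ref{lem 1}, Lemma \ref{lem 8}, and Lemma \ref{lem 5} (or rather Lemma \ref{lem 5}'s comparison of $\mu(I(0,r))$ with $\mu(I(x,r))$) established earlier. I would first treat $Mf(y)\approx \widetilde{M}f(y)$. Writing out the definitions,
$$Mf(y)=\sup_{r>0}\frac{1}{\mu(B(0,r))}\int_{B(0,r)}\tau_y|f|(z)\,d\mu(z),$$
while $\widetilde{M}f(y)$ is the same supremum with $B(y,\rho)$ in place of $B(0,\rho)$ and no translation. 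The link is the identity $\int_{B(0,r)}\tau_y|f|\,d\mu=\int_{\mathbb R}|f|\,\tau_{-y}\chi_{B_r}\,d\mu$, whose integrand is supported in $B(y,r)$ and is bounded by $1$ there by \eqref{**}. Hence $\int_{B(0,r)}\tau_y|f|\,d\mu\le\int_{B(y,r)}|f|\,d\mu\le\mu(B(y,r))\,\widetilde{M}f(y)$, and since $\mu(B(0,r))=\mu(B(y,r))$ is generally false I must instead compare $\mu(B(0,r))$ with $\mu(B(y,r))$; here the key observation is $\mu(B(y,r))\le C\mu(B(0,r))$ up to a doubling constant when $|y|\le Cr$, and when $|y|>r$ one uses that $\tau_{-y}\chi_{B_r}$ is actually pointwise small, $\le C(r/|y|)^{2\kappa+1}$, so the mass of the integral is controlled by $\mu(B(0,r))$ again. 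This gives $Mf(y)\lsim\widetilde{M}f(y)$. For the reverse, given $\rho>0$ one uses \eqref{translate2}-type positivity of $\tau_{-y}\chi_{B_\rho}$ together with a lower bound $\tau_{-y}\chi_{B_\rho}(z)\gsim 1$ for $z$ in a suitable subball, so that $\widetilde{M}f(y)$ evaluated at an appropriate radius is dominated by $Mf(y)$; alternatively, and more cleanly, one simply notes both are comparable to $M_\mu f$ and does only one of the two halves directly.

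Next I would handle $\widetilde{M}f(y)\approx M_\mu f(y)$, which is the more geometric part. For $\widetilde{M}f\lsim M_\mu f$: by Lemma \ref{lem 1}, $\chi_{B(y,\rho)}\le\chi_{I(y,3\rho)}+\chi_{I(-y,3\rho)}$, so
$$\int_{B(y,\rho)}|f|\,d\mu\le\int_{I(y,3\rho)}|f|\,d\mu+\int_{I(-y,3\rho)}|f|\,d\mu\le\mu(I(y,3\rho))M_\mu f(y)+\mu(I(-y,3\rho))M_\mu f(-y).$$
Since $\tau_y$ and hence $\widetilde{M}$ commute with the reflection $z\mapsto -z$ and $\widetilde{M}f(y)=\widetilde{M}f(-y)$ (because $B(y,\rho)=B(-y,\rho)$), and $M_\mu f(y)$, $M_\mu f(-y)$ are not equal but $\mu(I(y,3\rho))\approx\mu(I(y,\rho))$ by the doubling bound \eqref{rel12}, one must still compare $M_\mu f(-y)$ with $M_\mu f(y)$; this is where I expect the main obstacle. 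The trick is to use Lemma \ref{lem 5}: $\mu(I(0,\rho))\le 2\mu(I(x,\rho))$, which via the doubling property yields $\mu(I(-y,\rho))\approx\mu(I(y,\rho))$ when $|y|\le\rho$, and when $|y|>\rho$ one has $I(-y,\rho)\subset B(y,2|y|+\rho)$ roughly, forcing one to re-route through $\widetilde{M}$; in practice the clean statement is that $\mu(I(-y,3\rho))/\mu(B(y,\rho))$ stays bounded, so the whole right side is $\lsim\mu(B(y,\rho))\bigl(M_\mu f(y)+\sup_{\rho'}\mu(I(-y,\rho'))^{-1}\int_{I(-y,\rho')}|f|\bigr)$, and absorbing. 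For the reverse $M_\mu f\lsim\widetilde{M}f$: by Lemma \ref{lem 8}, $\mu(B(x,r))\le 2\mu(I(x,r))$, and $I(x,r)\subset B(x,r)\cup(-B(x,r))$... actually $I(x,r)\subset B(x,2r)$ after a short computation comparing the intervals, whence $\int_{I(x,r)}|f|\,d\mu\le\int_{B(x,2r)}|f|\,d\mu\le\mu(B(x,2r))\widetilde{M}f(x)$, and dividing by $\mu(I(x,r))\gsim\mu(B(x,2r))$ up to a constant (again doubling plus Lemma \ref{lem 8}) gives $M_\mu f(x)\lsim\widetilde{M}f(x)$.

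The main obstacle, as indicated, is the interplay between the reflection-symmetric ball $B(y,r)$ (which satisfies $B(y,r)=B(-y,r)$) and the genuine metric ball $I(y,r)$ (which does not): Lemma \ref{lem 1} decomposes $B(y,r)$ into $I(\pm y,3r)$, thereby introducing the value of the maximal function at $-y$, and one must argue that $M_\mu f(-y)$ is harmlessly controlled. I would resolve this by the symmetry of $\widetilde{M}$ under reflection combined with the first equivalence $M_\mu f\approx\widetilde{M}f$ used only in the already-proved direction $\widetilde{M}f\lsim M_\mu f$ would be circular, so instead I keep everything at the level of integrals: the estimate $\int_{I(-y,3r)}|f|\,d\mu\le\int_{B(-y,3r)}|f|\,d\mu=\int_{B(y,3r)}|f|\,d\mu\lsim\mu(B(y,r))\widetilde{M}f(y)$, using $I(-y,3r)\subset B(-y,6r)=B(y,6r)$ and the doubling of $\mu$ over $B$-balls. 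This closes $\widetilde{M}f(y)\lsim\widetilde{M}f(y)+\text{(same)}$, i.e. $\widetilde{M}f(y)\lsim M_\mu f(y)$ only after noting the $I(y,3r)$ term is immediately $\le\mu(I(y,3r))M_\mu f(y)\lsim\mu(B(y,r))M_\mu f(y)$. Collecting the four one-directional estimates yields $Mf\approx\widetilde{M}f\approx M_\mu f$ pointwise, which is the claim. Throughout, all doubling constants are uniform in $y$ and $r$ by \eqref{rel12} and Lemmas \ref{lem 8} and \ref{lem 5}, so no hidden dependence enters.
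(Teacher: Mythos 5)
Your overall architecture (two two-sided comparisons, with the reflected interval $I(-y,3\rho)$ flagged as the main obstacle) is reasonable, but two of your four one-directional estimates have genuine gaps, and the paper itself does not attempt a from-scratch proof: its argument is essentially a pair of citations. It takes the chain $Mf\le C_1\widetilde{M}f\le C_2M_\mu f$ from \cite{M}, takes the integral inequality $\int_{I(y,\rho)}|f|\,d\mu\le C\int_{B_\rho}\tau_y|f|\,d\mu$ from \cite{MH}, and closes the circle with Lemma \ref{lem 5}. Of your estimates, the sound ones are $Mf\lsim\widetilde{M}f$ (via the support and size bound \eqref{**}) and $M_\mu f\lsim\widetilde{M}f$ (via $I(x,r)\subset B(x,2r)$, Lemma \ref{lem 8} and doubling). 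The first genuine gap is the return direction $\widetilde{M}f\lsim Mf$: the pointwise lower bound ``$\tau_{-y}\chi_{B_\rho}(z)\gsim1$ on a suitable subball'' that you invoke is false, since by \eqref{**} the translate is $\le C(\rho/|y|)^{2\kappa+1}\ll1$ uniformly on its \emph{entire} support once $|y|\gg\rho$. What is actually available, and what the paper imports from \cite{MH}, is a lower bound of size $(\rho/|y|)^{2\kappa+1}$ on a fixed proportion of the support, which integrates to $\int_{I(y,\rho)}|f|\,d\mu\lsim\int_{B_\rho}\tau_y|f|\,d\mu$; without that external input this direction does not close, and your fallback (``note both are comparable to $M_\mu f$'') is circular, since that comparison is what is being proved.

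The second gap is the direction $\widetilde{M}f(y)\lsim M_\mu f(y)$. You correctly isolate the reflected piece, but your resolution bounds $\int_{I(-y,3\rho)}|f|\,d\mu$ by $\mu(B(y,\rho))\,\widetilde{M}f(y)$, which only yields $\widetilde{M}f\lsim M_\mu f+\widetilde{M}f$ and proves nothing. Moreover the obstacle is not removable by the inclusions you use: for $|y|\gg\rho$ the component of $B(y,\rho)$ near $-y$ carries about half of $\mu(B(y,\rho))\approx\rho|y|^{2\kappa+1}$, whereas any interval $I(y,s)$ reaching $-y$ has $\mu(I(y,s))\gsim|y|^{2\kappa+2}$. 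Testing on $f=\chi_{I(-y,\rho)}$ gives $\widetilde{M}f(y)\ge\tfrac12$ while $M_\mu f(y)\lsim\rho/|y|$, so the one-point inequality cannot be reached this way; your decomposition naturally produces the symmetrized bound $\widetilde{M}f(y)\lsim M_\mu f(y)+M_\mu f(-y)$, and it is that form (or the inequalities cited by the paper) that you would have to work with and then check is sufficient for the subsequent arguments.
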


\begin{proof} Let $f \in L^{1} _{loc}(\mu)$.
 There exist two constants $C_{1}>0$ and $C_{2}>0$ not depending on $f$, such that 
\begin{equation}\label{eq 1}
M f(y) \leq C_{1} \widetilde{M } f(y) \leq C_{2} M_{\mu} f(y),
\end{equation}
for all $ y \in \mathbb{R}$ as we can see in \cite{M}. We also have (see  \cite{MH}) that there exists a constant $C>0$ not depending on $f$ such that 
$$\int_{I(y,\rho)} \vert f(x) \vert d\mu (x) \leq C \int_{B(0,\rho)} \tau_{y} \vert f(x) \vert d\mu (x)  $$
for all  $ \rho > 0$ and $ y \in \mathbb{R}.$ Since we have
$$ 
  \frac{1}{\mu (I(y,\rho))} \int_{I(y,\rho)} \vert f \vert (z) d\mu (z) \leq \frac{2C}{\mu (B(0,\rho))} \int_{B(0,\rho)} \tau_{y} \vert f(x) \vert d\mu (x)  $$
  according to Lemma \ref{lem 5}, the result follows from Relation (\ref{eq 1}).
\end{proof}

\begin{proof}[Proof of Theorem \ref{maxi}] 
We can suppose that  $1<q<\alpha< p\leq\infty$, since otherwise the space is trivial or equal to Lebesgue space. Let  $f\in (L^{q}, L^{p})^{\alpha} (\mu)$.
Fix $y \in \mathbb{R}$ and $r>0$.  It comes from Lemma \ref{lem3}, Lemma \ref{lem 6} and Lemma \ref{lem 1} and the triangular inequality for $L^{q}(\mu)$-norm that 
$$\begin{aligned}
&\left[\int_{\mathbb R}(\tau_{y}\left(Mf\right)^{q})\chi_{B_r}(x)d\mu(x)\right]^{\frac{1}{q}}\\
&\qquad\qquad\qquad\lsim  \left[\int_{\mathbb R} (\widetilde{M } f)^{q}(x)\tau_{-y}\chi_{B_r}(x)d\mu(x)\right]^{\frac{1}{q}} \lsim  \left[\int_{\mathbb R} [\widetilde{M } f(x)\chi_{B(y,r)}(x)]^{q}d\mu(x)\right]^{\frac{1}{q}} \\
&\qquad\qquad\qquad\lsim  \left[\int_{\mathbb R} [\widetilde{M } f(x)\chi_{I(-y,3r)}(x) +\widetilde{M } f(x)\chi_{I(y,3r) }(x))]^{q}d\mu(x)\right]^{\frac{1}{q}} \\
&\qquad\qquad\qquad\lsim \left( \int_{\mathbb R} [\widetilde{M } f(x)\chi_{I(-y,3r)}(x) ]^{q}d\mu(x) \right)^{\frac{1}{q}}   +\left( \int_{\mathbb R} [\widetilde{M } f(x)\chi_{I(y,3r)}(x) ]^{q}d\mu(x) \right)^{\frac{1}{q}}  
\end{aligned}.$$
Making appeal to  \cite[Theorem 2.35]{De}, we have 
$$\left( \int_{\mathbb R} [\widetilde{M } f(x)\chi_{I(-y,3r)}(x) ]^{q}d\mu(x) \right)^{\frac{1}{q}} \lsim\left( \int_{\mathbb R} \vert f(x)\vert^{q} \widetilde{M }\chi_{I(-y,3r)}(x) d\mu(x) \right)^{\frac{1}{q}} $$
and 
$$\left( \int_{\mathbb R} [\widetilde{M } f(x)\chi_{I(y,3r)}(x) ]^{q}d\mu(x) \right)^{\frac{1}{q}} \lsim \left( \int_{\mathbb R} \vert f(x)\vert^{q} \widetilde{M }\chi_{I(y,3r)}(x) d\mu(x) \right)^{\frac{1}{q}} .$$ 
It follows that 
\begin{equation} \label{rel 9}\begin{array}{lll}
\left[\int_{\mathbb R}(\tau_{y}\left(Mf\right)^{q})\chi_{B_r}(x)d\mu(x)\right]^{\frac{1}{q}} &\lsim&  \left( \int_{\mathbb R} \vert f(x)\vert^{q} \widetilde{M }\chi_{I(-y,3r)}(x) d\mu(x) \right)^{\frac{1}{q}} \\
 &+&  \left( \int_{\mathbb R} \vert f(x)\vert^{q} \widetilde{M }\chi_{I(y,3r)}(x) d\mu(x) \right)^{\frac{1}{q}}. \end{array}
\end{equation}
Since Dunkl translation commute with Dunkl Maximal function (see \cite{GEM}), we have  
\begin{eqnarray*}
	 \int_\mathbb{R} \vert f \vert^q (x) \widetilde{M } \chi_{I(-y,3r)} (x) d\mu (x) &\lsim  & \int_\mathbb{R} \vert f \vert^q (x) M_{\mu}  \chi_{I(-y,3r)} (x) d\mu (x) \\
	& \lsim &  \int_\mathbb{R} \vert f \vert^q (x) M_{\mu} ( \tau_{-y} \chi_{I(0,3r)} )(x) d\mu (x) \\
	& \lsim&   \int_\mathbb{R} \vert f \vert^q (x) M ( \tau_{-y} \chi_{I(0,3r)}) (x) d\mu (x) \\
	& \lsim &   \int_\mathbb{R} \vert f \vert^q (x)  \tau_{-y}( M \chi_{I(0,3r)}) (x) d\mu (x) \\
	& \lsim &  \int_\mathbb{R} \tau_{y} \vert f \vert^q (x)  ( M \chi_{I(0,3r)}) (x) d\mu \\
	& \lsim &  \int_\mathbb{R} \tau_{y} \vert f \vert^q (x)  ( M_{\mu} \chi_{I(0,3r)}) (x) d\mu (x) ,
\end{eqnarray*}
according to Lemma \ref{lem3} and Lemma \ref{lem 6}. That is 
\begin{equation}
    \left(\int_\mathbb{R} \vert f \vert^q (x) \widetilde{M } \chi_{I(-y,3r)} (x) d\mu (x) \right)^{\frac{1}{q}}\lsim  \left(\int_\mathbb{R} \tau_{y} \vert f \vert^q (x)  ( M_{\mu} \chi_{I(0,3r)}) (x) d\mu (x)\right)^{\frac{1}{q}}.\label{estimate1}
\end{equation}
The term on the right hand side of (\ref{estimate1}) is less or equal to 
$$C	\left[  \left(\int_{I(0,2r)} \tau_{y}\left|f(x)\right|^q M_{\mu} \chi_{I(0,3r)} (x) d\mu(x)\right)^{\frac{1}{q}} 
	+\left(\int_{I(0,2 r)^{c}} \tau_{y} \left|f(x)\right|^q M_{\mu} \chi_{I(0,3r)} (x) d\mu (x)\right)^{\frac{1}{q}} \right] .$$
Since  $M_{\mu} \chi_{I(0,3r)} (x) = \sup\limits_{\rho > 0} \frac{\mu(I(0,3r) \cap I(x,\rho))}{\mu (I(x,\rho))} \leq 1$ for all  $x \in \mathbb{R}$, we have 
$$  \left(\int_{I(0,2r)} \tau_{y}\left|f(x)\right|^q M_{\mu} \chi_{I(0,3r)} (x) d\mu_{k} (x)\right)^{\frac{1}{q}} \leq \left(\int_{I(0,2r)} \tau_{y} \left|f(x)\right|^q  d\mu (x)\right)^{\frac{1}{q}} $$ 
and 
\begin{eqnarray*}
	\left(\int_{I(0,2 r)^{c}} \tau_{y} \left|f(x)\right|^q M_{\mu} \chi_{I(0,3r)} (x) d\mu (x)\right)^{\frac{1}{q}}  &=& \left(\sum^{\infty}_{i=1}\int_{2^i  r\leq \vert x \vert <2^{i+1} r}\tau_{y} \left|f(x)\right|^q M_{\mu} \chi_{I(0,3r)} (x) d\mu(x)\right)^{\frac{1}{q}} \\
	&\lsim&  \sum^{\infty}_{i=1}\left(\int_{2^i  r\leq \vert x \vert <2^{i+1} r}\tau_{y} \left|f(x)\right|^q \frac{\mu(I(0,r))}{\mu(I(0,\vert x \vert))}d\mu(x)\right)^{\frac{1}{q}}\\
	&\lsim& \sum^{\infty}_{i=1} \frac{\mu(I(0,r))^{\frac{1}{q}}}{\mu(I(0,2^i  r))^{\frac{1}{q}}} \left(\int_{I(0,2^{i+1} r) }\tau_{y} \left|f(x)\right|^q  d\mu(x)\right)^{\frac{1}{q}},
\end{eqnarray*}
thanks to Lemma \ref{maxcha}.
Therefore 
$$\begin{aligned}&\left(\int_\mathbb{R} \vert f \vert^q (x) \widetilde{M } \chi_{I(-y,3r)} (x) d\mu (x) \right)^{\frac{1}{q}}\\
&\ \ \ \ \ \ \ \ \ \lsim\left( \int_{I(0,2r)} \tau_{y} \left|f(x)\right|^q  d\mu (x)\right)^{\frac{1}{q}}  + \sum^{\infty}_{i=1} \frac{\mu(I(0,r))^{\frac{1}{q}}}{\mu(I(0,2^i  r))^{\frac{1}{q}}} \left(\int_{I(0,2^{i+1} r) } \tau_{y} \left|f(x)\right|^q  d\mu(x)\right)^{\frac{1}{q}} .
\end{aligned}$$
Using the same arguments as above, we obtain
$$\begin{aligned}&\left( \int_\mathbb{R} \vert f \vert^q (x) \widetilde{M } \chi_{I(y,3r)} (x) d\mu (x)\right)^{\frac{1}{q}}\\
&\ \ \ \ \ \ \ \ \ \lsim \left(\int_{I(0,2r)} \tau_{-y} \left|f(x)\right|^q  d\mu (x)\right)^{\frac{1}{q}}  + \sum^{\infty}_{i=1} \frac{\mu(I(0,r))^{\frac{1}{q}}}{\mu(I(0,2^i  r))^{\frac{1}{q}}}\left( \int_{I(0,2^{i+1} r) } \tau_{-y} 
\left|f(x)\right|^q  d\mu(x)\right)^{\frac{1}{q}} .
\end{aligned}$$
Taking these two estimates into account, Relation (\ref{rel 9}) becomes 
$$\begin{aligned}&	\left[ \int_{\mathbb R}(\tau_{y}\left(Mf\right)^{q})\chi_{B_r}(x)d\mu(x)\right]^{\frac{1}{q}}\\
&\ \ \ \ \ \ \ \ \ \lsim\left( \int_{I(0,2r)} \tau_{y} \left|f(x)\right|^q  d\mu (x)\right)^{\frac{1}{q}}  + \sum^{\infty}_{i=1} \frac{\mu(I(0,r))^{\frac{1}{q}}}{\mu(I(0,2^i  r))^{\frac{1}{q}}} \left(\int_{I(0,2^{i+1} r) } \tau_{y} \left|f(x)\right|^q  d\mu(x)\right)^{\frac{1}{q}} \\
& \ \ \ \ \ \ \ \ \ \ +  \left(\int_{I(0,2r)} \tau_{-y} \left|f(x)\right|^q  d\mu (x)\right)^{\frac{1}{q}}  + \sum^{\infty}_{i=1} \frac{\mu(I(0,r))^{\frac{1}{q}}}{\mu(I(0,2^i  r))^{\frac{1}{q}}}\left( \int_{I(0,2^{i+1} r) } \tau_{-y} 
\left|f(x)\right|^q  d\mu(x)\right)^{\frac{1}{q}} .
\end{aligned}$$  
Since $\mu$ is symmetric invariant measure, the $L^p(\mu)$-norm of both sides in the above relation yields 
$$\begin{aligned}&\left\Vert \left[\int_{\mathbb R}(\tau_{(\cdot)}\left(Mf\right)^{q})\chi_{B_r}(x)d\mu(x)\right]^{\frac{1}{q}}   \right\Vert_{p} \\
&\qquad\qquad\qquad\qquad \lsim
%
%
%
	\left( \displaystyle \int_\mathbb{R} \left( \int_{I(0,2r)} \tau_{y} \left|f(x)\right|^q  d\mu(x) \right)^{\frac{p}{q}} d\mu (y) \right)^{\frac{1}{p}}\\
&\qquad\qquad\qquad\qquad+\sum^{\infty}_{i=1} \frac{\mu(I(0,r))^{\frac{1}{q}}}{\mu(I(0,2^i  r))^{\frac{1}{q}}} \left( \displaystyle \int_\mathbb{R} \left(\int_{I(0,2^{i+1} r) } \tau_{y} \left|f(x)\right|^q  d\mu(x)  \right)^{\frac{p}{q}} d\mu (y)\right)^{\frac{1}{p}},
	\end{aligned}$$
thanks to Minkowski inequality for integral. 
Multiplying both sides of the inequality by  $(\mu (I(0,r)))^{\frac{1}{\alpha} - \frac{1}{q} - \frac{1}{p}}$, we obtain  

$$\begin{aligned}&(\mu (I(0,r)))^{\frac{1}{\alpha} - \frac{1}{q} - \frac{1}{p}}\left\Vert \left[\int_{\mathbb R}(\tau_{(\cdot)}\left(Mf\right)^{q})\chi_{B_r}(x)d\mu(x)\right]^{\frac{1}{q}}   \right\Vert_{p}\\
&\lsim(\mu (I(0,r)))^{\frac{1}{\alpha} - \frac{1}{q} - \frac{1}{p}}\left( \int_\mathbb{R} \left( \int_{I(0,2r)} \tau_{y} \left|f(x)\right|^q  d\mu (x) \right)^{\frac{p}{q}} d\mu (y) \right)^{\frac{1}{p}}\\
&+\sum^{\infty}_{i=1} \frac{\mu(I(0,r))^{\frac{1}{q}}(\mu (I(0,r)))^{\frac{1}{\alpha} - \frac{1}{q} - \frac{1}{p}}}{\mu(I(0,2^i  r))^{\frac{1}{q}}} \left( \int_\mathbb{R} \left(\int_{I(0,2^{i+1} r) } \tau_{y} \left|f(x)\right|^q  d\mu(x)  \right)^{\frac{p}{q}} d\mu (y)\right)^{\frac{1}{p}} = I +II.
\end{aligned}$$

\begin{equation}I \lsim (\mu (I(0,2r)))^{\frac{1}{\alpha} - \frac{1}{q} - \frac{1}{p}}\left( \int_\mathbb{R} \left( \int_{I(0,2r)} \tau_{y} \left|f(x)\right|^q  d\mu(x) \right)^{\frac{p}{q}} d\mu (y) \right)^{\frac{1}{p}}\lsim \Vert f \Vert_{q,p,\alpha}.\label{relat6}
\end{equation}
as $\frac{1}{\alpha} - \frac{1}{q} - \frac{1}{p} <0$ and $\mu$ is doubling. The second term is less or equal to 
$$\begin{aligned}
& C\sum^{\infty}_{i=1} \frac{(\mu (I(0,r)))^{\frac{1}{\alpha} - \frac{1}{p}}}{(\mu (I(0,2^{i}r)))^{\frac{1}{\alpha} - \frac{1}{p}}} (\mu (I(0,2^{i+1}r)))^{\frac{1}{\alpha} - \frac{1}{q}- \frac{1}{p}}\left( \int_\mathbb{R} \left(\int_{I(0,2^{i+1} r) } \tau_{y} \left|f(x)\right|^q  d\mu(x)  \right)^{\frac{p}{q}} d\mu (y)\right)^{\frac{1}{p}} \\
	&\quad\lsim \sum^{\infty}_{i=1} \left(\frac{1}{2^{\delta_{\mu} (\frac{1}{\alpha} - \frac{1}{p})}} \right)^{i} (\mu (I(0,2^{i+1}r)))^{\frac{1}{\alpha} - \frac{1}{q} - \frac{1}{p}}\left( \int_\mathbb{R} \left(\int_{I(0,2^{i+1} r) } \tau_{y} \left|f(x)\right|^q  d\mu(x)  \right)^{\frac{p}{q}} d\mu (y)\right)^{\frac{1}{p}}
\end{aligned}$$
thanks to Estimate (\ref{revd}).

Hence
\begin{equation} II \lsim\sum^{\infty}_{i=1} \left(\frac{1}{2^{\delta_{\mu} (\frac{1}{\alpha} - \frac{1}{p})}} \right)^{i} \Vert f \Vert_{q,p,\alpha}\lsim \Vert f \Vert_{q,p,\alpha},\label{relat7}
\end{equation}
since the series $\sum^{\infty}_{i=1} \left(\frac{1}{2^{\delta_{\mu} (\frac{1}{\alpha} - \frac{1}{p})}} \right)^{i}$ converges. From Relations (\ref{relat6}) and  (\ref{relat7}), we deduce that 
$$(\mu (I(0,r)))^{\frac{1}{\alpha} - \frac{p}{q} - \frac{1}{p}}	\left\Vert \left[\int_{\mathbb R}(\tau_{(\cdot)}\left(Mf\right)^{q})\chi_{B_r}(x)d\mu(x)\right]^{\frac{1}{q}}   \right\Vert_{p} \lsim \Vert f \Vert_{q,p,\alpha}.$$
We conclude by taking the supremum in the right hand side over all  $r>0$.
\end{proof}

\begin{proof}[Proof of Theorem \ref{weakmaxi}]
 Let  $1 < \alpha < p < +\infty$ and  $f\in (L^{1}, L^{p})^{\alpha} (\mu)$. 
 There exists a constant $C>$ such that  $Mf(x) \leq C\widetilde{M } f(x)$ for all $x \in \mathbb{R}$, according to Lemma  \ref{lem3}.
It follows that 
\begin{eqnarray} \label{rel 8}
\Vert Mf \Vert_{(L^{1,+\infty}, L^{p})^{\alpha} (\mu)}
\leq C \Vert\widetilde{M } f\Vert_{(L^{1,+\infty}, L^{p})^{\alpha} (\mu)}.
\end{eqnarray}

Let $y \in \mathbb{R}$ and $r >0$. 
\begin{equation} \label{rel 1}
\Vert  (\widetilde{M } f ) \tau_{-y}\chi_{B_r} \Vert_{L^{1,+\infty}(\mu)} \leq \Vert  (\widetilde{M } f ) \chi_{B(y,r)} \Vert_{L^{1,+\infty}(\mu) }
\end{equation}
since $\tau_{-y}\chi_{B_r} \leq 1$ and the support  of  $\tau_{-y}\chi_{B_r} $ is a subset of  $B(y,r)$. 

Fix $\lambda >0$. We have  
\begin{eqnarray*}
\mu \left( \left\{
x \in B(y,r): \quad \widetilde{M } f (x)  >  \lambda
\right\} \right) &\leq& \mu \left( \left\{
x \in I(-y,3r):\quad \widetilde{M } f (x)  >  \frac{\lambda}{2}
\right\} \right) \nonumber \\
&+&  \mu \left( \left\{
x \in I(y,3r): \quad \widetilde{M } f (x)  >  \frac{\lambda}{2}
\right\}\right),
\end{eqnarray*}
thanks to Lemma \ref{lem 1}. 
But, as we can see in the proof of  \cite[Theorem 2.35]{De}, 
\begin{equation*}
\mu \left( \left\{
x \in I(-y,3r):\; \widetilde{M } f (x)  >  \frac{\lambda}{2}
\right\} \right) \lsim \frac{1}{\lambda}  \int_\mathbb{R} \vert f \vert (x) \widetilde{M }\chi_{I(-y,3r)} (x) d\mu (x)  
\end{equation*}

and 
\begin{equation*}
	\mu \left( \left\{
		x \in I(y,3r):\quad \widetilde{M } f (x)  >  \frac{\lambda}{2}
	\right\} \right) \lsim   \frac{1}{\lambda} \int_\mathbb{R} \vert f \vert (x) \widetilde{M }\chi_{I(y,3r)} (x) d\mu (x).  
	\end{equation*}
It follows that 
\begin{eqnarray*}
\lambda \mu\left( \left\{
	x \in B(y,r): \quad \widetilde{M } f (x)  >  \lambda
\right\} \right) &\lsim &  \int_\mathbb{R} \vert f \vert (x) \widetilde{M }  \chi_{I(-y,3r)} (x) d\mu (x)\\
&+&  \int_\mathbb{R} \vert f \vert (x) \widetilde{M }\chi_{I(y,3r)} (x) d\mu (x), 
\end{eqnarray*}
so that taking the supremum over all  $\lambda>0$, leads to 
\begin{eqnarray*}
\Vert  (\widetilde{M } f ) \chi_{B(y,r)} \Vert_{L^{1,+\infty}(\mu) } &\lsim&  \int_\mathbb{R} \vert f \vert (x)\widetilde{M } \chi_{I(-y,3r)} (x) d\mu (x)\\
& +&  \int_\mathbb{R} \vert f \vert (x) \widetilde{M }\chi_{I(y,3r)} (x) d\mu (x). 
\end{eqnarray*}
Taking this Estimate in  Relation (\ref{rel 1}) yields 
\begin{equation} \label{rel 4}
\begin{array}{lll}
\Vert  (\widetilde{M } f ) \tau_{-y}\chi_{B_r} \Vert_{L^{1,+\infty}(\mu)} &\lsim&  \int_\mathbb{R} \vert f \vert (x) \widetilde{M } \chi_{I(-y,3r)} (x) d\mu (x)\\
&+&  \int_\mathbb{R} \vert f \vert (x) \widetilde{M }\chi_{I(y,3r)} (x) d\mu (x). 
\end{array}
\end{equation}
Using the same argument as in the proof of Theorem \ref{maxi}, we have 
$$\begin{aligned}& \int_\mathbb{R} \vert f \vert (x) \widetilde{M } \chi_{I(-y,3r)} (x) d\mu (x) \\
&\qquad\qquad\qquad\qquad\lsim\int_{I(0,2r)} \tau_{y} \left|f(x)\right|  d\mu (x)  + \sum^{\infty}_{i=1} \frac{\mu(I(0,r))}{\mu(I(0,2^i  r))} \int_{I(0,2^{i+1} r) } \tau_{y} \left|f(x)\right|  d\mu(x), \end{aligned}$$
and 
$$\begin{aligned}
& \int_\mathbb{R} \vert f \vert (x) \widetilde{M } \chi_{I(y,3r)} (x) d\mu (x) \\
&\qquad\qquad\qquad\qquad\lsim \int_{I(0,2r)} \tau_{-y} \left|f(x)\right|  d\mu (x)  + \sum^{\infty}_{i=1} \frac{\mu(I(0,r))}{\mu(I(0,2^i  r))} \int_{I(0,2^{i+1} r) } \tau_{-y} \left|f(x)\right|  d\mu(x). \end{aligned}$$
Taking the above estimations in Relation (\ref{rel 4}), we have   
$$\begin{aligned}
&\Vert  (\widetilde{M } f ) \tau_{-y}\chi_{B_r} \Vert_{L^{1,+\infty}(\mu) } \\
&\qquad\qquad\qquad\qquad\lsim \left( \int_{I(0,2r)} \tau_{y} \left|f(x)\right|  d\mu (x)  + \sum^{\infty}_{i=1} \frac{\mu(I(0,r))}{\mu(I(0,2^i  r))} \int_{I(0,2^{i+1} r) } \tau_{y} \left|f(x)\right|  d\mu(x)  \right) \\
 &\qquad\qquad\qquad\qquad+  \left( \int_{I(0,2r)} \tau_{-y} \left|f(x)\right|  d\mu (x)  + \sum^{\infty}_{i=1} \frac{\mu(I(0,r))}{\mu(I(0,2^i  r))} \int_{I(0,2^{i+1} r) } \tau_{-y} \left|f(x)\right|  d\mu(x) \right).  
\end{aligned}$$
We end the proof as we did in Theorem \ref{maxi}.
\end{proof}

\end{document}